\def\ep{{\varepsilon}}
\def\R{\mathbb R}
\newtheorem{theo}{\textbf{Theorem}}[section]
\newtheorem{lem}[theo]{\textbf{Lemma}}
\newtheorem{prop}[theo]{\textbf{Proposition}}
\newtheorem{cor}[theo]{\textbf{Corollary}}
\newtheorem{assumption}[theo]{\textbf{Assumption}}
\newtheorem{rem}[theo]{\textbf{Remark}}
\title{Interplay of nonlinear diffusion, initial tails and Allee effect on the speed of invasions}
\date{}
\begin{document}

\maketitle

\begin{center}
{\large\bf Matthieu Alfaro \footnote{Institut Montpelli\'erain Alexander Grothendieck, CNRS, Univ. Montpellier, France.\\ E-mail: matthieu.alfaro@umontpellier.fr} and Thomas Giletti \footnote{IECL, Universit\'{e} de Lorraine, B.P. 70239, 54506
Vandoeuvre-l\`{e}s-Nancy Cedex, France.\\
 E-mail:
thomas.giletti@univ-lorraine.fr}.} \\
[2ex]
\end{center}




\vspace{10pt}

\begin{abstract} We focus on the spreading properties of solutions of monostable equations with nonlinear diffusion. We consider both the porous medium diffusion and the fast diffusion regimes. Initial data  may have heavy tails, which tends to accelerate the invasion phenomenon. On the other hand,
 the nonlinearity may  involve a weak Allee effect, which tends to slow down the process. We study the balance between these three effects (nonlinear diffusion, initial tail, KPP nonlinearity/Allee effect), revealing the  separation between \lq\lq no acceleration" and \lq \lq acceleration''. In most of the cases where acceleration occurs, we also give an accurate estimate of the position of the level sets.
\\

\noindent{\underline{Key Words:} reaction-diffusion equations, spreading properties, porous medium diffusion, fast diffusion, heavy tails, Allee effect, acceleration.}\\

\noindent{\underline{AMS Subject Classifications:} 35K65, 35K67, 35B40, 92D25.}
\end{abstract}

\maketitle

\section{Introduction} \label{s:intro}

In this paper we are concerned with the {\it spreading properties}
of $u(t,x)$ the solution of the nonlinear monostable reaction-diffusion
equation
\begin{equation}\label{eq}
\partial _t u=\partial _{xx}(u^{m})+f(u),\quad t>0,\, x\in \R.
\end{equation}
We consider both the {\it porous medium diffusion} regime $m>1$ and the {\it fast diffusion} regime $0<m<1$, the {\it linear diffusion} case $m=1$ being already well understood. The typical nonlinearity $f$ we have in mind is $f(s)=rs^{\beta}(1-s)$, with $r>0$ and either $\beta=1$ (Fisher-KPP) or 
$\beta >1$ ({\it Allee effect}).  Equation \eqref{eq} is supplemented with a nonnegative initial data which is {\it front-like} and may have a {\it heavy
tail}, say $u_0(x)\sim \frac{1}{x^{\alpha}}$ for some $\alpha >0$ as $x\to+\infty$.  Our main goal is to understand the interplay between nonlinear diffusion (measured by $m>0$), the initial tail (measured by $\alpha\in(0,+\infty]$) and the behavior of the nonlinearity at 0 (measured by $\beta\geq 1$), and to determine if propagation occurs by accelerating or not. In the former case, we also aim at estimating the
position of the level sets of $u(t,\cdot)$ as $t\to +\infty$, typically
revealing that they travel exponentially or polynomially fast.

\medskip

\noindent{\bf Nonlinear diffusion.} The well-posedness of the Cauchy problem for the degenerate diffusion equation
$$
\partial _t u=\Delta (u^{m}), \quad t>0,\, x\in \R ^{N},
$$
is now well understood as long as $m>1$ or $m_c:=\max(0,1-\frac{2}{N})<m<1$. Precise statements and results can be found in \cite{Her-Pie-85}, \cite{Kal-87}, \cite{book-Wu-et-al}, \cite{Vaz-book}, \cite{Vaz-book2}, \cite{book-Das-Ken}  and the references therein. The main feature of the regime $m>1$ is that the equation degenerates at the points where $u=0$. Hence, the so-called {\it Barenblatt self-similar solutions} exhibit a free boundary, a loss of regularity of solutions occurs and disturbances propagate with finite speed. This is in sharp contrast with the infinite speed of propagation of solutions of the heat equation ($m=1)$ and of the fast diffusion equation ($m<1$). See e.g. \cite{Aud-Vaz-16} and \cite{Aud-Vaz-17}.

{}From the population dynamics point of view, let us briefly explain the role of introducing nonlinear effects into the dispersal behavior of a species. After the observation of arctic ground squirrels migrating from densely populated areas into sparsely populated ones (even if the latter is less favorable) in \cite{Car-71}, Gurney and Nisbet \cite{Gur-Nis-75}, Gurtin and MacCamy \cite{Gur-Mac-77} introduced porous medium diffusion ($m>1$) in models. On the other hand, in order to adapt to low density mate distributions, individuals may need to travel extreme distances, thus leading to accelerating range expansion, for which fast diffusion ($m<1$) may be pertinent: see \cite{Kin-Mac-03} and the references therein for the propagation of early Palaeoindian hunter-gatherers.

\medskip

\noindent{\bf KPP nonlinearities.} In some
population dynamics models, a common assumption is that the growth
is only slowed down by the intra-specific competition, so that the
growth per capita is maximal at small densities. This leads to
consider reaction-diffusion equations with
nonlinearities $f$ of the Fisher-KPP type, namely
$$
f(0)=f(1)=0, \quad\text{ and } \quad 0<f(s)\leq f'(0)s, \quad \forall s\in(0,1).
$$
The simplest example $f(s)=rs(1-s)$, $r>0$, was first introduced by Fisher
\cite{Fis-37} and Kolmogorov, Petrovsky and Piskunov
\cite{Kol-Pet-Pis-37} to model the spreading of advantageous genetic
features in a population.

In such situations, it is well known that the way the front-like
initial data ---~in the sense of Assumption \ref{ass:initial}~---
approaches zero at $+\infty$ is of crucial importance on  the
propagation, that is the invasion for large times of the unstable
steady state $u\equiv 0$ by the stable steady state $u\equiv 1$. 

Let us start with the linear diffusion case $m=1$. For initial data with an exponentially bounded tail (or light tail) at
$+\infty$, there is a {\it spreading speed} $c\geq c^*:=2\sqrt{f'(0)}$
which is selected by the rate of decay of the tail. There is a large literature on such results and improvements, and we only mention the seminal works  \cite{Fis-37},
\cite{Kol-Pet-Pis-37}, \cite{McKea-75}, \cite{Had-Rot-75},
\cite{Kam-76}, \cite{Uch-78}, \cite{Aro-Wei-78}. On the other hand, Hamel and Roques \cite{Ham-Roq-10} recently
considered the case of initial data with heavy (or not
exponentially bounded) tail, namely
\begin{equation*}\label{def:heavy}
\lim_{x\to +\infty} u_0(x)e^{\ep x}=+\infty, \quad \forall \ep >0 .
\end{equation*}
They proved that, for
any $\lambda\in(0,1)$, the $\lambda$-level set of $u(t,\cdot)$
travels infinitely fast as $t\to +\infty$, thus revealing an
acceleration phenomenon. Also, the location of these level
sets is estimated in terms of the decaying rate to zero of the initial data.

Much less is known on the propagation of solutions to \eqref{eq} in the nonlinear diffusion case~$m\neq 1$. Concerning the porous medium regime $m>1$, we mention the works \cite{Kam-Ros-04}, \cite{Aud-Vaz-16} where propagation at constant speed is analyzed. In this paper, we further consider some cases where acceleration occurs because of a heavy initial tail. Concerning the fast diffusion regime $0<m<1$, let us mention the works
\cite{Kin-Mac-03}, \cite{Aud-Vaz-17} where acceleration (already induced by diffusion, whatever the initial tail) is investigated. In this paper, we refine those results and provide precise estimates on the location of the accelerating level sets.

\medskip

\noindent{\bf Allee effect.} In population dynamics, due for instance to
 the difficulty to find mates or to the lack of genetic diversity at low density, the KPP
  assumption is unrealistic in some situations. In other words, the growth per capita is no longer maximal at small densities, which is referred to as an Allee effect.

In this Allee effect context, if $f'(0)>0$ the situation ---~even if more complicated~--- is more or less comparable to the
 KPP situation. On the other hand,
 much less is known in the degenerate situation where $f'(0)=0$, for which typical nonlinearities take the form
$$
f(s)=rs^\beta(1-s), \quad r>0,\, \beta >1.
$$

In the linear diffusion case $m=1$, propagation at constant speed in presence of an Allee effect was for instance studied in \cite{Aro-Wei-78}, \cite{Xin-93}, \cite{Beb-Li-Li-97}, \cite{Zla-05}. Very recently the balance between the strength of the Allee effect (which tends to
slow down the invasion process) and heavy tails (which tend to
accelerate it) was studied in \cite{Alf-tails}. 
 For algebraic tails, the exact
separation between acceleration or not (depending on the strength of the Allee effect) was obtained. Also, when acceleration occurs,  the location of the level sets of the solution was precisely estimated.

To the best of our knowledge there are very few results on the propagation of solutions to~\eqref{eq} combining nonlinear diffusion $m\neq 1$ and an Allee effect (say $\beta >1$). Let us mention the work \cite{Med-03} which, for $m>1$ and Heaviside type initial data, proves propagation at constant speed. In this paper, we prove both \lq \lq no acceleration'' and \lq\lq acceleration'' results, depending on the parameters of diffusion $m>0$, the initial tail $\alpha\in(0,+\infty]$, and the behavior of $f$ at zero  $\beta\geq 1$. Also, when acceleration occurs, we provide sharp estimates of the level sets of the solution.

\begin{rem}[Nonlocal diffusion] Related results exist for the integro-differential
equation of the KPP type
\begin{equation}\label{eq-garnier}
\partial _t u=J*u-u+f(u),
\end{equation}
where the kernel $J$ allows to take into account rare long-distance dispersal events. Here, the
initial data is typically compactly supported and this is the tail of the dispersion kernel $J$ that determines how
fast is the invasion. If the kernel is exponentially bounded, then propagation occurs at a constant speed, as can be
 seen in \cite{Wei-82}, \cite{Cov-preprint}, \cite{Cov-Dup-07} among others.
 More recently, Garnier \cite{Gar-11}  proved an acceleration phenomenon for kernels which are not
  exponentially bounded, so that \eqref{eq-garnier} is an accurate model to explain the Reid's paradox of rapid plant migration (see~\cite{Gar-11} for references on this issue). As far as the integro-differential equation~\eqref{eq-garnier} with an
Allee effect is concerned, we refer to
\cite{Alf-fujita}, \cite{Alf-Cov-preprint} for results on the  balance between the Allee effect and dispersion kernels with heavy tails. 

To conclude on acceleration phenomena in Fisher-KPP type equations, let us mention the case
when the Laplacian is replaced by the generator of a Feller semigroup, a typical example being
\begin{equation}\label{eq-Cab-Roq-13}
\partial _t u=-(-\partial _{xx})^\alpha u+f(u), \quad 0<\alpha<1,
\end{equation}
where $-(-\partial _{xx})^{\alpha}$ stands for the Fractional Laplacian, whose symbol is $\vert\xi\vert ^{2\alpha}$.
 In this context, it was proved by Cabr\'e and Roquejoffre \cite{Cab-Roq-13} that, for a compactly supported
 initial data, acceleration always occurs, due to the algebraic tails of the Fractional Laplacian. Last, notice that the question of acceleration or not in the nonlocal equation \eqref{eq-Cab-Roq-13} with
 an Allee effect has been recently solved by  Gui and Huan \cite{Gui-Hua-preprint}.
\end{rem}

\section{Assumptions and main results}\label{s:results}

Through this work, and even if not recalled, we always make the following assumption
 on the initial condition. 

\begin{assumption}[Initial condition]\label{ass:initial} The initial condition $u_0:\R\to \left[0,1\right]$ is uniformly continuous and asymptotically front-like, in the sense that
\begin{equation*}
\label{front-like}
u_0>0 \; \text{ in } \R,\quad \liminf _{x\to -\infty} u_0(x)>0,\quad \lim _{x\to +\infty} u_0(x)=0.
\end{equation*}
\end{assumption}
As far as the nonlinearity $f$ is concerned, we always assume the following.
\begin{assumption}[Monostable nonlinearity]\label{ass:f} The nonlinearity $f:\left[0,1 \right]\to \R$ is of the class~$C^{1}$, and is of the monostable type, in the sense that
$$
f(0)=f(1)=0, \quad f>0 \; \text{ in } (0,1).
$$
\end{assumption}
Notice that, in each result, we clearly state the decay of the tail of the initial data as well as the behavior of $f(u)$ as $u \to 0$ (Fisher KPP vs. Allee effect), which therefore we did not include in the above assumptions. 
The simplest examples of nonlinearities satisfying Assumption~\ref{ass:f} are
 given by $f(s)=r s^\beta(1-s)$, $r>0$, with $\beta =1$ (Fisher-KPP) or with $\beta >1$ (Allee effect).

In the sequel, we always denote by $u(t,x)$ the solution of \eqref{eq} with initial condition $u_0$. From the above assumptions and the comparison principle, one gets $0\leq u(t,x)\leq 1$ and even
\begin{equation*}\label{strict}
0<u(t,x)<1, \quad \forall (t,x)\in(0,+\infty)\times\R.
\end{equation*}
The strict upper bound can be inferred from the strong maximum principle, but the degenerate diffusion at zero (when $m\neq 1$) prevents such an argument for the strict lower bound, which however follows from \cite[Corollary~4.4]{Vaz-book2} when $m>1$ (recall that $u_0$ is continuous and positive) and from \cite[Theorem~4.6]{Vaz-book2} when $0<m<1$.

Since the initial data is front-like, it is very expected that the state $u\equiv 1$ does invade the whole line $\R$ as $t\to+\infty$: there is $c _0>0$ such that 
\begin{equation}
\label{invasion}
\lim _{t\to +\infty} \inf _{x\leq c _0 t} u(t,x)=1,
\end{equation}
meaning  that propagation is at least linear. We also have 
\begin{equation}
\label{zero-a-droite}
\lim _{x\to+\infty}u(t,x)=0,\quad \forall t\geq 0.
\end{equation}
For the sake of completeness, these preliminary facts \eqref{invasion} and \eqref{zero-a-droite} will be proved in Section \ref{s:prelim}.

In order to state our results we define, for any $\lambda \in (0,1)$ and $t\geq 0$,
$$
E_\lambda(t):=\{x\in\R:\, u(t,x)=\lambda\}
$$
the $\lambda$-level set of $u(t,\cdot)$. In view of \eqref{invasion} and \eqref{zero-a-droite}, for any $\lambda\in(0,1)$, there is a time $t_\lambda >0$ such that
\begin{equation}
\label{nonvide}
\emptyset \neq E_\lambda(t) \subset (c _0 t, +\infty), \quad \forall t\geq t_\lambda.
\end{equation}

Our first main result is concerned with the acceleration phenomenon that occurs for any~$m>0$ as soon as the nonlinearity is of the Fisher-KPP type. For linear diffusion $m=1$ this was proved in \cite{Ham-Roq-10}. We extend the result to porous medium diffusion $m>1$ and fast diffusion $0<m<1$.

\begin{theo}[Acceleration in the Fisher-KPP case]\label{th:FKPP_porous_FDE}
Let $m > 0$ and $\alpha >0$ be given. Assume that there are $C>0$, $\overline C>0$ and $x_0>1$ such that
\begin{equation}
\label{algebraic-acc-kpp}
\frac{\overline C}{x^{\alpha}}\geq u_0(x)\geq \frac{C}{x^{\alpha}},\quad \forall x\geq x_0,
\end{equation}
as well as $r>0$ and $s_0\in(0,1)$ such that
\begin{equation}
\label{nonlinearity-acc-kpp}
f(s)\geq r s,\quad \forall 0\leq s\leq s_0.
\end{equation}
Select $\overline r >0$ such that
\begin{equation}
\label{nonlinearity-accbis-kpp}
\overline r s\geq f(s),\quad \forall 0\leq s\leq 1.
\end{equation}
Then, for any $\lambda \in(0,1)$, any small $\ep>0$,  there is a
time $T_{\lambda,\ep}\geq t_\lambda$ such that
\begin{equation}
\label{levelset-kpp}
E_\lambda(t)\subset (x^-(t),x^{+}(t)),\quad \forall t\geq  T _{\lambda,\ep},
\end{equation}
where
$$
 x^-(t):=e^{(r-\ep)\Gamma t}, \quad x^+(t):=e^{(\overline r+\ep)\Gamma t}, \quad \Gamma :=\max\left(\frac{1-m}2,\frac{1}{\alpha}\right).
 $$
\end{theo}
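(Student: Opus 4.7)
The plan is to squeeze $u$ between solutions of the two linearized equations
\[
\partial _t v=\partial _{xx}(v^{m})+\rho v,
\]
with $\rho =r$ (for the lower bound) and $\rho =\overline r$ (for the upper bound), and then to reduce each of them to a pure nonlinear diffusion problem. Indeed, setting $v(t,x)=e^{\rho t}w(\tau (t),x)$ with $\tau (t)=(e^{(m-1)\rho t}-1)/((m-1)\rho )$ if $m\neq 1$ and $\tau (t)=t$ if $m=1$, a direct computation shows that $w$ satisfies $\partial _\tau w=\partial _{xx}(w^{m})$. In the rescaled time one has $\tau (t)\to +\infty$ when $m\geq 1$, while $\tau (t)\to \tau _\infty :=1/((1-m)\rho )<+\infty$ when $0<m<1$; this dichotomy is precisely what produces the split $\Gamma =\max ((1-m)/2,1/\alpha )$.

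For the upper bound, I would pick $\overline u_0\geq u_0$ with a sharp algebraic tail $\overline C/x^{\alpha }$ at $+\infty$, solve the $\overline r$-linearized equation, and use $f(s)\leq \overline r s$ together with the parabolic comparison principle to obtain $\overline v\geq u$. Locating $E _\lambda (t)$ from above then reduces, via the change of unknown above, to finding $x$ such that $\overline w(\tau (t),x)<\lambda e^{-\overline r t}$. The core technical input is that algebraic tails are propagated by the pure PME/FDE: starting from $\overline w(0,x)\asymp 1/x^{\alpha }$ one expects $\overline w(\tau ,x)\asymp K_\tau /x^{\alpha }$ in the tail region, since there the diffusive term $\partial _{xx}(w^{m})\sim x^{-m\alpha -2}$ is negligible with respect to the transported initial profile. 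Inverting $K_\tau /x^{\alpha }\asymp \lambda e^{-\overline r t}$ yields $x\asymp e^{\overline r t/\alpha }$, matching $\Gamma =1/\alpha$. In the fast diffusion case, the solution moreover develops a spontaneous Barenblatt-type tail $\asymp C_\infty /x^{2/(1-m)}$ as $\tau \nearrow \tau _\infty $, which dominates the input tail whenever $\alpha >2/(1-m)$ and then produces $x\asymp e^{\overline r t(1-m)/2}$, matching $\Gamma =(1-m)/2$. Making this rigorous amounts to constructing explicit algebraic supersolutions of the pure PME/FDE, e.g.\ of the form $K(\tau )(1+|x|/L(\tau ))^{-\alpha }$ together with a Barenblatt profile when $0<m<1$.

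For the lower bound one has to be more careful, because $f(s)\geq rs$ only for $s\leq s_0$. I would therefore use a modified reaction $\widehat f\leq f$ on $[0,1]$ satisfying $\widehat f(s)\geq (r-\ep )s$ near $0$ (for any prescribed $\ep >0$), and compare $u$ from below with the solution $\underline v$ of $\partial _t \underline v=\partial _{xx}(\underline v^{m})+\widehat f(\underline v)$ issued from a truncated datum $\underline u_0\leq \min (u_0,s_0/2)$ with algebraic tail $\underline C/x^{\alpha }$. In the tail region, where $\underline v$ stays small, the same transformation-and-tail analysis applied to the $(r-\ep )$-linearized problem gives $\underline v(t,x^{-}(t))\geq \lambda $ for every $\lambda $ below some threshold and every $t$ large. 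For level values $\lambda $ above that threshold, I would conclude using the preliminary invasion property \eqref{invasion}: once $u$ exceeds a fixed small value on a long enough interval, a standard local KPP bootstrap forces $u\geq \lambda $ on essentially the same interval after a bounded extra time, which is absorbed by the $\ep $ slack in $e^{(r-\ep )\Gamma t}$.

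The main obstacle is the uniform tail analysis of the pure nonlinear diffusion equation with algebraic initial data, especially in the fast diffusion regime $0<m<1$, where one has to capture the crossover between the input tail $1/x^{\alpha }$ and the spontaneously generated Barenblatt tail $1/x^{2/(1-m)}$ uniformly in $\tau \in (0,\tau _\infty )$. The split $\Gamma =\max ((1-m)/2,1/\alpha )$ reflects exactly this dichotomy, and producing sharp self-similar algebraic sub-- and supersolutions witnessing both regimes is the technical heart of the proof; everything else---the comparison principle, the substitution $v=e^{\rho t}w$, and the bootstrap from threshold-level sets to $\lambda $-level sets---is standard once these tail estimates are in hand.
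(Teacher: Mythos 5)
Your change-of-variables framework, converting $\partial_t v = \partial_{xx}(v^m) + \rho v$ into the pure equation $\partial_\tau w = \partial_{xx}(w^m)$, is correct and does appear in the paper --- but only as a device to prove the preliminary fact \eqref{zero-a-droite}. For Theorem~\ref{th:FKPP_porous_FDE} itself the paper takes a more elementary route that bypasses any uniform PME/FDE tail analysis: it builds explicit sub- and supersolutions of \eqref{eq} directly from the reaction \emph{ODE} $\partial_t w = \rho w$, i.e.\ $w(t,x)=u_0(x)e^{\rho t}$. The upper bound uses $\min(1,w)$ with $\rho=\overline r+\ep/2$, the lower bound uses the bump $v=\max\left(0,\, w - A w^{1+\eta}\right)$ with $\rho\in(r-\ep,r)$, and the diffusion, expressed through $\varphi=u_0'/u_0$ and $\varphi'$, is a perturbation absorbed by the $\ep$-slack. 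Crucially, the bump vanishes exactly when $w$ reaches the cutoff $(1/A)^{1/\eta}$, so the inequality $f(v)\ge rv$ is available throughout its support and comparison applies with no lateral boundary condition to check; in the fast diffusion case the bump is cut by a small positive constant on the left to tame $(1-Aw^\eta)^{m-1}$, and the ``heavier-tail'' reduction of subsection~\ref{s:heavy} brings the exponent to $\gamma=\min(\alpha,2/(1-m))$, which is where the split $\Gamma=\max\left(\frac{1-m}{2},\frac{1}{\alpha}\right)$ comes from.

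The genuine gap in your plan is the lower bound. You compare $u\ge\underline v$, where $\underline v$ solves the nonlinear equation with a modified $\widehat f\le f$, and then want to bound $\underline v$ from below by ``the same transformation-and-tail analysis applied to the $(r-\ep)$-linearized problem'' in the tail region. But $\widehat f(s)\ge(r-\ep)s$ holds only near $0$; the linearized solution grows without bound and leaves that regime, so it is not a global subsolution, and restricting comparison to the moving set $\{\underline v<\theta\}$ leaves the lateral boundary with no usable data. Truncating by $\min(v^{\mathrm{lin}},\theta)$ does not repair this, since a $\min$ of subsolutions is not a subsolution. The paper's bump construction is engineered precisely to sidestep this obstruction. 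A secondary issue affects your upper bound: with the natural ansatz $A(\tau)x^{-\alpha}$ for a supersolution of the pure PME, the binding constraint at $x\simeq A(\tau)^{1/\alpha}$ forces $A'\gtrsim A^{1-2/\alpha}$, hence $A(\tau)\gtrsim\tau^{\alpha/2}$, and unwinding $\tau(t)\asymp e^{(m-1)\overline r t}$ for $m>1$ yields the exponent $\frac{m-1}{2}+\frac{1}{\alpha}$ in place of $\frac{1}{\alpha}$. The supersolution that is actually sharp is, after the change of variables, precisely the paper's $u_0(x)e^{(\rho-\overline r)t(\tau)}$, whose coefficient grows only like a tiny power of $\tau$; this is not visible from the pure PME side alone, so without adopting the paper's perturbative viewpoint your route does not close to the stated exponents.
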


The above result indicates that the level sets of the solution travel exponentially fast.
For any $m>1$, we have $\Gamma=\frac 1 \alpha$ (independent on $m$) for all $\alpha >0$ and the estimate is the same as that of \cite{Ham-Roq-10} when $m=1$. On the other hand if $0<m<1-\frac 2 \alpha$, we have $\Gamma =\frac{1-m}{2}$ and, due to fast diffusion, the estimate is in contrast with that of \cite{Ham-Roq-10}.

When $m\geq 1$ the heavy tail assumption, i.e. the lower bound in \eqref{algebraic-acc-kpp}, is crucial for the acceleration results in Theorem \ref{th:FKPP_porous_FDE}
to hold. On the other hand, when $0<m<1$ this is not necessary since the fast diffusion equation makes the tail of the solution (at least) algebraically heavy at any positive time \cite{Her-Pie-85}, see subsection \ref{s:heavy} for details. To shed light on this phenomenon, we state the following corollary which is a rather straightforward consequence of Theorem \ref{th:FKPP_porous_FDE} and subsection \ref{s:heavy}.

\begin{cor}[Acceleration in the Fisher-KPP case, $0<m<1$]\label{cor:FKPP_porous_FDE}
Let $0<m <1$ be given. Assume that there are $\overline C>0$ and $x_0>1$ such that
\begin{equation}
\label{algebraic-acc-kpp-cor}
\frac{\overline C}{x^{\frac{2}{1-m}}}\geq u_0(x),\quad \forall x\geq x_0,
\end{equation}
as well as $r>0$  and $s_0\in(0,1)$ such that \eqref{nonlinearity-acc-kpp} holds. Select  $\overline r >0$ as in \eqref{nonlinearity-accbis-kpp}. Then the conclusions of Theorem \ref{th:FKPP_porous_FDE} are valid with $\Gamma=\frac{1-m}2$.
\end{cor}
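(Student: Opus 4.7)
The plan is to reduce the corollary to Theorem~\ref{th:FKPP_porous_FDE} by exploiting the fact that, in the fast diffusion regime $0<m<1$, the solution $u(t,\cdot)$ naturally acquires an algebraic lower tail of order $x^{-2/(1-m)}$ at any positive time, even when the initial datum does not enjoy such a lower bound. Once a matching two-sided tail control is available at some time $t_1>0$, one may invoke Theorem~\ref{th:FKPP_porous_FDE} from this new initial time with the tail exponent $\alpha=2/(1-m)$.

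The parameter check is immediate: for $\alpha=2/(1-m)$ one has $1/\alpha=(1-m)/2$, so $\Gamma=\max(\tfrac{1-m}{2},1/\alpha)=\tfrac{1-m}{2}$, in agreement with the claim.

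The main step, which I would locate in subsection~\ref{s:heavy}, is to produce $t_1,C_1,\overline C_1,x_1>0$ with
$$
\frac{C_1}{x^{2/(1-m)}}\leq u(t_1,x)\leq \frac{\overline C_1}{x^{2/(1-m)}}, \quad \forall x\geq x_1.
$$
The lower inequality is the spontaneous algebraic tail generated by fast diffusion, already advertised in the text preceding the corollary. The upper inequality amounts to propagating the hypothesis \eqref{algebraic-acc-kpp-cor} forward in time: using $f(s)\leq\overline r s$ from \eqref{nonlinearity-accbis-kpp}, one compares $u$ with a supersolution built either from a Barenblatt profile with the correct tail exponent, or from an explicit algebraic barrier of the form $h(t)\,x^{-2/(1-m)}$ with $h$ chosen so that the reaction and diffusion terms balance at infinity.

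It then remains to apply Theorem~\ref{th:FKPP_porous_FDE} to $u$ viewed as a solution on $[t_1,+\infty)\times\R$ with initial datum $u(t_1,\cdot)$, which satisfies the two-sided algebraic tail bounds required there with $\alpha=2/(1-m)$. The enclosure of $E_\lambda(t)$ in $(e^{(r-\ep)\Gamma(t-t_1)},e^{(\overline r+\ep)\Gamma(t-t_1)})$ follows from that theorem, and the constant factors $e^{\mp(r\mp\ep)\Gamma t_1}$ are harmlessly absorbed by enlarging $T_{\lambda,\ep}$ or slightly adjusting $\ep$. The only genuine difficulty lies in the tail analysis of subsection~\ref{s:heavy}; the corollary itself is then a book-keeping consequence of that analysis combined with Theorem~\ref{th:FKPP_porous_FDE}.
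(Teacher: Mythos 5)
Your proposal is correct and follows essentially the same route as the paper: exploit the fact (quantified via the reference to Herrero--Pierre, cited as \cite{Her-Pie-85} and invoked in subsection~\ref{s:heavy}) that fast diffusion spontaneously generates a lower tail of order $x^{-2/(1-m)}$ at positive times, then reduce to Theorem~\ref{th:FKPP_porous_FDE} with $\alpha=2/(1-m)$, for which indeed $\Gamma=\max(\tfrac{1-m}{2},\tfrac{1}{\alpha})=\tfrac{1-m}{2}$. The one organizational difference is that you propagate the \emph{upper} tail to time $t_1$ so as to apply Theorem~\ref{th:FKPP_porous_FDE} as a black box; the paper instead observes (in subsection~\ref{s:heavy}) that the time shift is only needed for the \emph{lower} level-set estimate, while the upper level-set estimate in subsection~\ref{ss:upper-kpp} is run directly from the hypothesis on $u_0$ (with the special handling of the critical exponent $\gamma=2/(1-m)$, $\beta=1$ at the end of that subsection, where $\overline C$ is enlarged rather than $x_0$). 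Your extra supersolution step is nevertheless sound --- with $f\le\overline r s$, a barrier of the form $h(t)x^{-2/(1-m)}$ does work, since $2/(1-m)$ is precisely the exponent for which $\partial_{xx}(x^{-2m/(1-m)})$ has the same decay, so the ODE for $h$ closes --- and it merely reproves locally what the proof of Theorem~\ref{th:FKPP_porous_FDE} already furnishes.
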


{}From now on, we consider an Allee effect by letting $f(s)$ behave like $s^{\beta}$,  $\beta >1$, as $s\to 0$. Our next theorem shows that the acceleration phenomenon disappears when $\beta$ is large enough.

\begin{theo}[No acceleration regime]\label{th:no-acc} Let $m>0$, $\alpha >0$ and $\beta >1$ be such that
\begin{equation}\label{alpha-beta-no-acc}
 \beta \geq \max\left(1+\frac 1 \alpha,2-m\right).
\end{equation}
Assume that there are $\overline C>0$ and $x_0>1$ such that
\begin{equation}
\label{algebraic-no-acc}
u_0(x)\leq \frac{\overline C}{x^\alpha},\quad \forall x\geq x_0 ,
\end{equation}
as well as $\overline r>0$ and $s_0\in(0,1)$ such that
\begin{equation}
\label{nonlinearity-no-acc}
 f(s)\leq \overline r s^\beta,\quad \forall 0\leq s\leq s_0.
\end{equation}
Then, there is a speed $c>0$ such that, for any $\lambda \in(0,1)$,
there is a time $T_\lambda \geq t_\lambda$ such that
\begin{equation}
\label{no-acc}
\emptyset \neq E_\lambda(t) \subset (c _0 t, ct), \quad \forall t\geq T_\lambda.
\end{equation}
\end{theo}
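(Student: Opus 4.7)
The plan is to construct a weak supersolution $\overline u$ of \eqref{eq} that travels rightward at a finite constant speed $c > 0$ and dominates $u$ at $t = 0$. The comparison principle then forces each level set $E_\lambda(t)$ to lie to the left of a moving front, and combined with \eqref{nonvide} this yields \eqref{no-acc}.

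I would look for a supersolution of the form
\[
\overline u(t,x) := \min\!\bigl(1, \; A\,(x - ct + B)_+^{-\gamma}\bigr),
\]
with positive constants $A$, $B$, $c$ and decay exponent $\gamma := \min\bigl(\alpha, \tfrac{1}{1-m}\bigr)$ when $m < 1$, and $\gamma := \alpha$ when $m \geq 1$. The restriction $\gamma \leq \alpha$ is needed to dominate the initial tail \eqref{algebraic-no-acc} (after taking $A$ large); the restriction $\gamma(1-m) \leq 1$, only relevant for $m < 1$, is what will allow the transport term to beat the diffusive term at infinity. With these choices, assumption \eqref{alpha-beta-no-acc} translates exactly into $\beta \geq 1 + \tfrac{1}{\gamma}$.

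On the region $\{\overline u < 1\}$, writing $\xi := x - ct + B$ so that $\overline u = A\xi^{-\gamma}$, a direct computation combined with \eqref{nonlinearity-no-acc} (valid once $\xi \geq (A/s_0)^{1/\gamma}$, hence $\overline u \leq s_0$) yields
\[
\partial_t \overline u - \partial_{xx}(\overline u^m) - f(\overline u) \;\geq\; c\gamma A \, \xi^{-\gamma - 1} \,-\, m\gamma(m\gamma + 1)\, A^m \, \xi^{-m\gamma - 2} \,-\, \overline r\, A^\beta \, \xi^{-\beta \gamma}.
\]
The transport exponent $-\gamma - 1$ is the largest (least negative) of the three exactly by the two constraints on $\gamma$ and $\beta$, so for $\xi$ beyond some threshold and $c$ taken large enough the right-hand side is nonnegative. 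The intermediate annulus where $s_0 < \overline u < 1$ is compact in $\xi$, and there one absorbs $f(\overline u) \leq \|f\|_{L^\infty}$ and the bounded diffusive term into the transport term by taking $c$ still larger. On $\{\overline u = 1\}$ the constant $1$ is itself a solution since $f(1) = 0$; and the minimum of two ordered weak supersolutions of \eqref{eq} is again a weak supersolution (see e.g.\ \cite{Vaz-book}, \cite{Vaz-book2}), so $\overline u$ is a valid weak supersolution.

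It remains to choose $A$ and $B$ so that $\overline u(0, \cdot) \geq u_0$ globally: on the plateau $\{\overline u = 1\}$ this is automatic since $u_0 \leq 1$, while on the polynomial tail it follows from \eqref{algebraic-no-acc} once $A$ is large enough relative to $\overline C$ (recall $\gamma \leq \alpha$). Applying the comparison principle gives $u(t,\cdot) \leq \overline u(t,\cdot)$ on $(0, +\infty) \times \R$, and any $x \in E_\lambda(t)$ must satisfy $\lambda \leq A(x - ct + B)^{-\gamma}$, i.e.\ $x \leq ct + (A/\lambda)^{1/\gamma} - B$; together with \eqref{nonvide} this yields \eqref{no-acc}. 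The main technical obstacle is the degenerate/singular character of \eqref{eq} at $u = 0$ when $m \neq 1$, which forces a weak-solution viewpoint; in particular, justifying that the truncated $\overline u$ is a global weak supersolution at the corner where the plateau meets the polynomial tail requires some care (standard approximation arguments or a Kato-type inequality for the PME/FDE).
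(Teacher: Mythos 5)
Your proposal is correct and follows the same strategy as the paper's proof: a traveling polynomial-profile supersolution $\min\bigl(1, A(x-ct+B)^{-\gamma}\bigr)$ whose transport term dominates both the diffusive and reactive terms for large $\xi$, with $c$ enlarged to handle the compact transition region, followed by the comparison principle. The only (inessential) difference is the parameterization of the decay exponent: you take $\gamma$ at the upper end of the admissible window (namely $\min\bigl(\alpha,\tfrac{1}{1-m}\bigr)$, resp.\ $\alpha$ for $m\geq 1$), whereas the paper fixes $p:=\tfrac{1}{\beta-1}$ at the lower end; both choices satisfy the three required inequalities $p\leq\alpha$, $p(1-m)\leq 1$, and $p(\beta-1)\geq 1$ under hypothesis \eqref{alpha-beta-no-acc}.
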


Next, we go back to the acceleration regime and look at the intermediate values of $\beta$. We need to distinguish the $m> 1$ porous medium regime from the $0<m<1$ fast diffusion regime. The former is sharply solved by the following, which indicates strong similarities with the linear diffusion regime $m=1$ studied in \cite{Alf-tails}.
\begin{theo}[Localization of the accelerating level sets, $m> 1$]\label{th:acc} Let $m> 1$, $\alpha >0$ and~$\beta >1$ be such that
\begin{equation*}\label{alpha-beta-acc}
 \beta < 1+\frac 1 \alpha.
\end{equation*}
Assume that there are $C>0$, $\overline C>0$ and $x_0>1$ such that
\begin{equation*}
\label{algebraic-acc}
\frac{\overline C}{x^{\alpha}}\geq u_0(x)\geq \frac{C}{x^\alpha},\quad \forall x\geq x_0 ,
\end{equation*}
as well as $r>0$, $\overline r>0$, and $s_0\in(0,1)$ such that
\begin{equation}
\label{nonlinearity-acc}
f(s)\geq r s^\beta  ,\quad \forall 0\leq s\leq s_0,
\end{equation}
and
\begin{equation}
\label{nonlinearity-accbis}
\overline r s^\beta\geq f(s),\quad \forall 0\leq s\leq 1.
\end{equation}
Then, for any $\lambda \in(0,1)$, any small $\ep>0$,  there is a
time $T_{\lambda,\ep}\geq t_\lambda$ such that
\begin{equation}
\label{levelset}
E_\lambda(t)\subset (x^-(t),x^{+}(t)),\quad \forall t\geq  T _{\lambda,\ep},
\end{equation}
where
$$
 x^-(t):=\left((r-\ep)C^{\beta -1}(\beta -1)t\right)^{\frac{1}{\alpha(\beta-1)}}, \quad x^+(t):=\left((\overline r+\ep)\overline C ^{\beta -1}(\beta -1)t\right)^{\frac{1}{\alpha(\beta-1)}}.
 $$
\end{theo}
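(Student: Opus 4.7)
The guiding principle is that, in the decaying tail where $u(t,x)\sim x^{-\alpha}$, the diffusion $\partial_{xx}(u^m)\sim x^{-\alpha m-2}$ is subordinate to the reaction $f(u)\sim u^\beta\sim x^{-\alpha\beta}$: the assumptions $m>1$ and $\beta<1+1/\alpha$ ensure $\alpha m+2>\alpha\beta$. The tail should therefore evolve essentially as the pure ODE $\dot v=f(v)$, whose integration from $v(0)=C/x^\alpha$ yields $v(t)=C\bigl(x^{\alpha(\beta-1)}-(\beta-1)rC^{\beta-1}t\bigr)^{-1/(\beta-1)}$, reaching any fixed level $\lambda\in(0,1)$ exactly when $x^{\alpha(\beta-1)}\sim r(\beta-1)C^{\beta-1}t$, which is the profile giving $x^\pm(t)$. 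I would make this rigorous by framing both bounds as explicit sub-/super-solutions of this form and closing the argument by the comparison principle.

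\textbf{Upper bound (super-solution).} Setting $\overline r':=\overline r+\ep$ and $W(t,x):=\overline C\bigl(x^{\alpha(\beta-1)}-(\beta-1)\overline r'\,\overline C^{\beta-1}t\bigr)^{-1/(\beta-1)}$ on the set where the bracket is positive, I take $\overline u:=\min(1,W)$ elsewhere. Then $\partial_t W=\overline r'W^\beta$ by construction, and since $f(W)\leq\overline r W^\beta$ by \eqref{nonlinearity-accbis}, the super-solution inequality reduces to $\partial_{xx}(W^m)\leq\ep W^\beta$. Writing $W^m=\overline C^m y^{-m/(\beta-1)}$ with $y:=x^{\alpha(\beta-1)}-(\beta-1)\overline r'\overline C^{\beta-1}t$, differentiating twice produces two non-negative terms of orders $x^{2\alpha(\beta-1)-2}W^\beta$ and $x^{\alpha(\beta-1)-2}W^\beta$ on regions where $y$ is bounded (which is the case at level sets); the hypothesis $\beta<1+1/\alpha$ is precisely $2\alpha(\beta-1)-2<0$, so both contributions are $o(W^\beta)$ in the large-$x$ regime where $W\leq 1$. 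Initial domination $\overline u(0,\cdot)\geq u_0$ follows from the upper tail in \eqref{algebraic-acc} after possibly enlarging $\overline C$, and the free boundary $\{W=1\}$ is handled by the weak comparison theory for the porous medium equation. One obtains $u\leq\overline u$, hence the upper bound $E_\lambda(t)\subset(-\infty,x^+(t))$ for $t\geq T_{\lambda,\ep}$.

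\textbf{Lower bound (sub-solution) and main obstacle.} For the matching lower bound I mirror the construction with $V(t,x):=C\bigl(x^{\alpha(\beta-1)}-(\beta-1)(r-\ep)C^{\beta-1}t\bigr)^{-1/(\beta-1)}$ and define $\underline u:=(V-s_0)_+$, so as to both stay below $s_0$ --- where the reaction lower bound \eqref{nonlinearity-acc} applies --- and to provide a compactly-supported-on-the-right front. The analogous computation establishes the sub-solution inequality on the positivity set. Initial domination is obtained by a time shift: I pick $t_0$ large enough that the preliminary invasion property \eqref{invasion} gives $u(t_0,\cdot)\geq 1-\eta$ on any prescribed bounded interval while $u(t_0,\cdot)\geq C/x^\alpha$ on the far right (a consequence of the lower tail in the initial data together with \eqref{zero-a-droite}); comparison on $[t_0,\infty)\times\R$ then yields $u\geq\underline u$, hence $E_\lambda(t)\subset(x^-(t),+\infty)$ for $t$ large. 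The principal technical hurdle is the verification of the two inequalities $\partial_{xx}(W^m)\leq\ep W^\beta$ and its sub-solution analogue: the porous medium exponent $m>1$ enters nontrivially through the factor $m/(\beta-1)$, yet the estimate closes precisely because of the scaling $2\alpha(\beta-1)<2$, which is why the threshold $\beta=1+1/\alpha$ is sharp in view of Theorem \ref{th:no-acc}. A secondary delicacy is the degenerate interface produced by the truncation $(V-s_0)_+$, resolved by invoking the weak (rather than classical) comparison principle for degenerate parabolic equations.
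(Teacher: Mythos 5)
Your upper bound is essentially the paper's: your $W$ is exactly the paper's $w$ (the pointwise ODE solution $\partial_t w=\rho w^\beta$ started from $u_0$, restricted to the tail where $u_0(x)=\overline C/x^\alpha$), and $\min(1,W)$ is exactly the supersolution used in subsection 5.2. The time-shift versus enlarging $\overline C$ is cosmetic. That half is sound.

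The lower bound, however, has a genuine gap, and the gap is in the shape of the subsolution, not in the comparison theory. You write that $\underline u := (V-s_0)_+$ ``stays below $s_0$''; it does not. For fixed $x$ in the tail, $V(t,x)$ increases in $t$ and blows up in finite time, so $V-s_0$ passes through $s_0$ and beyond, and in that range \eqref{nonlinearity-acc} is unavailable (worse, for $\underline u$ close to $1$ the reaction $f(\underline u)$ is small, so $\partial_t\underline u-f(\underline u)$ is large and positive). The more serious difficulty is at the free boundary $\{V=s_0\}$. There $\partial_t\underline u=\partial_t V=(r-\ep)V^\beta\to(r-\ep)s_0^\beta>0$, i.e.\ the time derivative is bounded away from zero, whereas $f(\underline u)\to 0$ and $\partial_{xx}(\underline u^m)\sim (x_{\mathrm{edge}}-x)^{m-2}$, which tends to $0$ when $m>2$ and is merely bounded when $m=2$. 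So for $m\geq 2$ the inequality $\partial_t\underline u-\partial_{xx}(\underline u^m)-f(\underline u)\leq 0$ fails in a classical sense in a full neighbourhood of the interface, and switching to a weak formulation does not help: since $m>1$, $\partial_x(\underline u^m)=m\underline u^{m-1}\partial_x V$ is continuous across the interface, so the weak and classical notions coincide there. The underlying issue is that a vertical (subtractive) truncation leaves $\partial_t$ at full strength at the edge while diffusion and reaction both vanish.

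The paper's Lemma 5.1 fixes exactly this by using the multiplicative truncation $v=\max(0,\,w-Aw^{1+\eta})=\max(0,\,w(1-Aw^{\eta}))$ with $\eta>\beta-1$ and $A$ large. Two things go right with this shape that go wrong with $(V-s_0)_+$. First, $v\leq \max_{w\geq 0} w(1-Aw^\eta)=\frac{\eta}{1+\eta}(A(1+\eta))^{-1/\eta}\leq s_0$ by choosing $A$ large, so \eqref{nonlinearity-acc} is available uniformly on the support of $v$; choosing $A$ even larger also forces the support of $v$ into the region $x\geq x_1$ where the tail formula for $u_0$ applies and $\varphi,\varphi'$ are small. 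Second, and decisively, $\partial_t v=\rho w^\beta\bigl(1-A(1+\eta)w^\eta\bigr)$, which is \emph{negative} near the edge $Aw^\eta\to 1$; so the dangerous term changes sign precisely where reaction and diffusion degenerate, and the inequality closes with margin $\rho<r$. After this bump yields the lower bound for small $\lambda$, the paper passes to arbitrary $\lambda\in(0,1)$ by a separate ``lifting'' step: compare with a solution started from a small Heaviside-type datum of height $\Theta$ and use the positive-speed invasion from Section~3 (your invocation of~\eqref{invasion} is pointing in that direction, but it cannot substitute for the missing bump).
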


The similarity with the linear diffusion regime can be understood from the fact that, when~$m>1$, diffusion is slower at low values of $u$ and therefore it is not expected to play the driving role in the acceleration phenomenon. The picture is now completely understood in the case $m>1$ and is summarized in Figure \ref{fig:pme}.

\begin{figure}[!h]
\begin{center}
\includegraphics[scale=1.6]{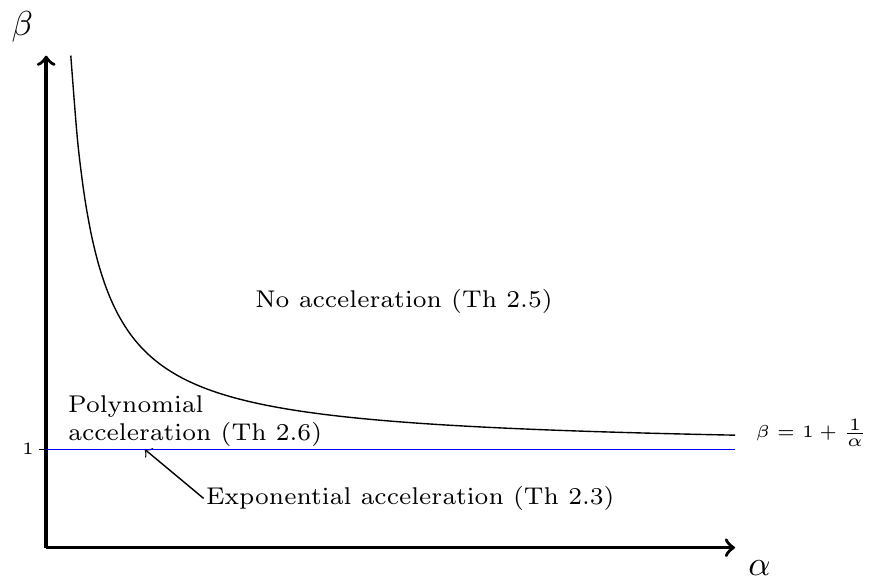}
\caption{Summary of our results in the porous medium diffusion case, $m>1$.} \label{fig:pme}
\end{center}
\end{figure}

On the other hand, the situation is different when $m \in (0,1)$, as the fast diffusion may then overcome the reactive growth. Let us thus turn to the situation when $m\in (0,1)$ and $\beta$ is  in the intermediate range where we expect acceleration. 

\begin{theo}[Localization of the accelerating level sets, $0<m <1$]\label{th:acc-FDE} Let $m \in (0, 1)$, $\alpha \in (0, \frac{2}{1-m})$ and $\beta>1$ be such that
\begin{equation}\label{alpha-beta-acc-FDE}
 \beta <  \min \left( 1+\frac 1 \alpha , m + \frac 2 \alpha \right).
\end{equation}
Assume that there are $C>0$, $\overline C>0$ and $x_0>1$ such that
\begin{equation}
\label{algebraic-acc-FDE}
\frac{\overline C}{x^{\alpha}}\geq u_0(x)\geq \frac{C}{x^\alpha},\quad \forall x\geq x_0,
\end{equation}
as well as $r>0$, $\overline r>0$, and $s_0\in(0,1)$ such that
\begin{equation}
\label{nonlinearity-acc-FDE}
f(s)\geq r s^\beta  ,\quad \forall 0\leq s\leq s_0,
\end{equation}
and
\begin{equation}
\label{nonlinearity-accbis-FDE}
\overline r s^\beta\geq f(s),\quad \forall 0\leq s\leq 1.
\end{equation}
Then, for any $\lambda \in(0,1)$, any small $\ep>0$,  there is a
time $T_{\lambda,\ep}\geq t_\lambda$ such that
\begin{equation}
\label{levelset-FDE}
E_\lambda(t)\subset (x^-(t),x^{+}(t)),\quad \forall t\geq  T _{\lambda,\ep},
\end{equation}
where
$$
 x^-(t):=\left((r-\ep)C^{\beta -1}(\beta -1)t\right)^{\frac{1}{\alpha(\beta-1)}}, \quad x^+(t):=\left((\overline r+\ep)\overline C ^{\beta -1}(\beta -1)t\right)^{\frac{1}{\alpha(\beta-1)}}.
 $$
\end{theo}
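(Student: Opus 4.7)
The plan is to adapt the strategy used for Theorem \ref{th:acc} (the porous-medium regime): construct sub- and super-solutions of \eqref{eq} from the explicit solution of the spatially-parametrized ODE $\partial_t v = f(v)$ with profile $v_0(x)\sim x^{-\alpha}$, and treat the degenerate diffusion term $\partial_{xx}(u^m)$ as a perturbation. A heuristic scaling count justifies why this can succeed here: on the tail $u\sim x^{-\alpha}$, the reaction term is $f(u)\sim x^{-\alpha\beta}$, while $\partial_{xx}(u^m)\sim x^{-\alpha m-2}$; the former dominates precisely when $\alpha\beta<\alpha m+2$, that is, $\beta<m+2/\alpha$. Under this condition the PDE behaves, on the relevant asymptotic region, like the pure reaction ODE; outside it, fast diffusion would drive the dynamics and an ODE-based comparison would be useless. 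This is exactly what distinguishes the fast-diffusion analysis from its porous-medium counterpart of Theorem \ref{th:acc}, which only required $\beta<1+1/\alpha$.

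For the upper bound, fix $\epsilon>0$ small and define
\begin{equation*}
V(t,x):=\left(\frac{x^{\alpha(\beta-1)}}{\overline C^{\beta-1}}-(\overline r+\epsilon)(\beta-1)\,t\right)^{-1/(\beta-1)}
\end{equation*}
on the region where the bracket is positive; this function solves $\partial_tV=(\overline r+\epsilon)V^\beta$ with initial profile $\overline C/x^\alpha$. By \eqref{nonlinearity-accbis-FDE}, the supersolution inequality $\partial_tV\geq\partial_{xx}(V^m)+f(V)$ reduces to $\epsilon V^\beta\geq\partial_{xx}(V^m)$. Writing $V^m=P^{-m/(\beta-1)}$ with $P$ the bracket, the two terms produced by differentiating twice are both of order $O\bigl(V^\beta x^{\alpha(\beta-m)-2}\bigr)$, which is $o(V^\beta)$ as $x\to+\infty$ exactly thanks to the hypothesis $\beta<m+2/\alpha$. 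Hence $V$ is a supersolution for $x\geq\overline x$ with $\overline x$ large enough, and gluing with the trivial supersolution $1$ produces a supersolution $\min(1,V)$ on $[0,T]\times[\overline x,+\infty)$. The comparison principle \cite{Vaz-book2} then yields $u\leq\min(1,V)$ (after adjusting the initial time so that the ordering at the lateral boundary $x=\overline x$ holds). Solving $V(t,x)\leq\lambda$ for $x$ gives exactly the bound $x\leq x^+(t)$ for $t$ large. The lower bound is obtained symmetrically from the subsolution
\begin{equation*}
W(t,x):=\min\!\left\{\lambda_0,\left(\frac{x^{\alpha(\beta-1)}}{C^{\beta-1}}-(r-\epsilon)(\beta-1)\,t\right)^{-1/(\beta-1)}\right\},
\end{equation*}
with $\lambda_0\leq s_0$ so that \eqref{nonlinearity-acc-FDE} applies on its support; one now needs $-\partial_{xx}(W^m)\leq\epsilon W^\beta$, which follows from the same scaling analysis, with the added comfort that $\partial_{xx}(W^m)$ is nonnegative on the pure tail. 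The initial ordering $W(t_0,\cdot)\leq u(t_0,\cdot)$ is achieved, after a time shift, by combining the lower tail bound in \eqref{algebraic-acc-FDE} with the invasion property \eqref{invasion}.

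The main obstacle is verifying the diffusion estimates $\pm\partial_{xx}(V^m)\lesssim\epsilon V^\beta$, and the analogous ones for $W$, \emph{uniformly} on the parabolic region of interest rather than only asymptotically as $x\to+\infty$. The delicate zone is where the comparison function is moderate (close to the truncation level, but not small) while $t$ is large, because the denominator $P(t,x)$ becomes small and the inverse powers of $P$ appearing in successive derivatives can spoil the estimate. Taming this requires choosing the truncation threshold $\lambda_0$ and the spatial cut-off $\overline x$ carefully, and further checking that the glued piecewise functions $\min(1,V)$ and $W$ remain valid super/sub-solutions of the degenerate equation across their corner set—an issue that is handled as in the proof of Theorem \ref{th:acc}. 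The specifically fast-diffusion hypothesis $\beta<m+2/\alpha$ is exactly what makes such a careful tuning possible.
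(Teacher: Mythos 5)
Your overall strategy (ODE comparison with the pure-reaction flow, treating $\partial_{xx}(u^m)$ as a perturbation, and exploiting $\beta<m+2/\alpha$ so that $x^{-\alpha m-2}=o(x^{-\alpha\beta})$) is exactly the paper's, and your upper bound via $\psi=\min(1,V)$ is correct: a $\min$ of supersolutions is a generalized supersolution, and the diffusion estimate goes through after taking $x_0$ large (the paper splits the exponents into the cases $\beta\leq 2-m$ and $\beta>2-m$ to make the uniform estimate transparent, but that is only a technicality).

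The genuine gap is in the lower bound. You propose $W=\min\{\lambda_0,\,\text{tail}\}$ as a subsolution, but a \emph{minimum} of subsolutions is not a subsolution: at the crossing point the $x$-derivative jumps \emph{down} (from $0$ to a negative value), so $\partial_{xx}(W^m)$ carries a \emph{negative} Dirac mass and $-\partial_{xx}(W^m)$ a positive one — the inequality $\mathcal L W\leq 0$ fails there. Your claim that this corner issue ``is handled as in the proof of Theorem~\ref{th:acc}'' is misleading: that proof (porous medium, Section~\ref{s:PME}) glues via $\max\big(0,\,w-Aw^{1+\eta}\big)$, and a $\max$ of subsolutions \emph{is} a subsolution, so the corner there has the right sign. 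Moreover, the paper explicitly notes that this PME-style bump cannot be reused when $0<m<1$, because the factors $(1-Aw^\eta)^{m-1}$, $(1-Aw^\eta)^{m-2}$ in $\partial_{xx}(v^m)$ blow up as $w\to A^{-1/\eta}$. The actual fix is a different subsolution: one sets
\begin{equation*}
v(t,x)=
\begin{cases}
\left(\tfrac{1}{A(1+\eta)}\right)^{1/\eta}\tfrac{\eta}{1+\eta} &\text{ if } x\leq X(t),\\
w(t,x)\big(1-Aw^\eta(t,x)\big) &\text{ if } x>X(t),
\end{cases}
\end{equation*}
where $X(t)$ is chosen so that $w(t,X(t))=(A(1+\eta))^{-1/\eta}$, i.e.\ the constant on the left equals the \emph{interior maximum} of $w\mapsto w(1-Aw^\eta)$. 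This makes $v$ \emph{exactly} $C^1$ across $x=X(t)$, eliminating the Dirac, and at the same time guarantees the uniform lower bound $1-Aw^\eta\geq\eta/(1+\eta)\geq 1/2$ on the right half-domain, which tames the singular factors you worried about. The comparison is then closed by a Hopf-lemma argument at the $C^1$ junction rather than by any viscosity gluing. Without this construction, the lower bound does not go through.
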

Let us comment the assumptions on the parameters $\alpha$ and $\beta$ of the above theorem. First of all, as already mentioned above and as will be detailed in subsection \ref{s:heavy}, fast diffusion increases the tail of solutions so that the range $\alpha > \frac{2}{1-m}$ is, in some sense, irrelevant. As a consequence the  assumption $\alpha\in(0,\frac{2}{1-m})$ actually only rules out the critical case $\alpha = \frac{2}{1-m}$. 

Moreover, \eqref{alpha-beta-acc-FDE} means that $\beta$ lies below two hyperbolae. The first one already appeared in Theorem \ref{th:no-acc}. The second one, which is relevant only in the regime $\frac{1}{1-m}<\alpha\leq \frac{2}{1-m}$, seems to appear for technical reasons. Indeed in the region $\frac{1}{1-m}<\alpha\leq \frac{2}{1-m}$ and $m+\frac{2}{\alpha}\leq \beta <2-m$ (which is  covered neither by Theorem \ref{th:no-acc} nor by Theorem \ref{th:acc-FDE}), even if we cannot precisely localize the position of the level sets, we can still prove acceleration. 

\begin{theo}[Acceleration in the remaining \lq\lq parameters region'', and even more]\label{th:reste}
Let $m \in (0,1)$ and $1 < \beta < 2-m$ be given. Assume that there are  $r>0$ and $s_0\in(0,1)$ such that~\eqref{nonlinearity-acc-FDE} holds.
\begin{itemize}
\item[$(i)$] Then, for any $c>0$, we have
\begin{equation*}
\lim _{t\to +\infty} \inf _{x\leq c t} u(t,x)=1.
\end{equation*}
\item[$(ii)$] If furthermore \eqref{algebraic-acc-FDE} holds with $\frac{1}{1-m} < \alpha \leq \frac{2}{1-m}$ and $m + \frac{2}{\alpha} \leq \beta < 1 +\frac{1}{\alpha},$ then for any~$\lambda \in (0,1)$, any small $\varepsilon >0$, there is a time $T_{\lambda,\varepsilon} \geq t_\lambda$ such that
\begin{equation}\label{dernier-truc}
E_\lambda (t) \subset (x^- (t), +\infty), \quad \forall t \geq T_{\lambda,\varepsilon},
\end{equation}
where
$$
 x^-(t):=\left((r-\ep)C^{\beta -1}(\beta -1)t\right)^{\frac{1}{\alpha(\beta-1)}}.
$$
If instead \eqref{algebraic-acc-kpp-cor} holds, then the same conclusion follows when replacing $\alpha$ by $\frac{2}{1-m}$ in the definition of $x^- (t)$.
\end{itemize}
\end{theo}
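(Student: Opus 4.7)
\textbf{Proof plan for Theorem \ref{th:reste}.} The overarching strategy in both parts is to combine the tail-generating effect of fast diffusion (subsection \ref{s:heavy}), which implies that $u(t_0, x) \geq K(1+|x|)^{-2/(1-m)}$ for some $K>0$ at any small $t_0>0$, with subsolution arguments driven by the pointwise ODE $\dot y = r y^\beta$. Up to a time-shift one may thus assume a concrete algebraic lower tail of exponent $2/(1-m)$, turning the abstract hypothesis of Assumption~\ref{ass:initial} into something workable.

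\smallskip

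\emph{Part (i).} To obtain spreading at arbitrary linear rate $c>0$, I would construct a compactly supported subsolution $\underline u(t,x) = V(|x|-ct)$ whose support expands at speed $c$, with $V \geq 0$ satisfying $(V^m)''+cV'+f(V)\geq 0$ on its support and $V > 0$ on a sufficiently long interval. The hypothesis $\beta < 2-m$ is exactly what allows such a profile to be constructed at any desired speed $c$: heuristically, in this regime the critical travelling wave speed of \eqref{eq} is infinite, so compactly supported sub-travelling waves exist at arbitrary $c$. Once the heavy-tail lower bound on $u(t_0,\cdot)$ is used to dominate $V(|x|-ct_0)$ on a suitable interval, the comparison principle yields $u(t,x)\to 1$ uniformly on $x\leq ct$. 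Alternatively, an iterative bootstrap procedure also works: starting from the heavy-tail bound, apply the ODE to push $u$ up to $1-\eta$ on some interval $[-L_k,L_k]$; then reapply fast-diffusion smoothing to $u(t_k,\cdot)$ to regenerate a heavy tail immediately ahead of $L_k$; iterate. The assumption $\beta < 2-m$ ensures that at every step the spatial gain $L_{k+1}-L_k$ dominates any prescribed $c(t_{k+1}-t_k)$.

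\smallskip

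\emph{Part (ii).} The target $x^-(t)=\left((r-\varepsilon)C^{\beta-1}(\beta-1)t\right)^{1/(\alpha(\beta-1))}$ is exactly the level-set position predicted by the pure ODE $\dot g = r g^\beta$ started from $g(0)=C/x^\alpha$, which in Theorem~\ref{th:acc-FDE} is made rigorous using a subsolution of the form $g(t)/x^\alpha$. That construction requires the diffusive correction $\partial_{xx}(u^m)\sim g^m/x^{m\alpha+2}$ to be dominated by $f(u)\sim g^\beta/x^{\alpha\beta}$, that is $\beta<m+2/\alpha$, which fails in the parameter region considered here. I would therefore build a composite subsolution on $x\geq 0$: on an inner region $x\leq X(t)$, a fast-diffusion Barenblatt-type profile that absorbs the diffusion contribution; on an outer region $x\geq X(t)$, the ansatz $g(t)/x^\alpha$; the two pieces glued continuously at a time-dependent interface $X(t)$ chosen so that the subsolution inequality is preserved across it. The outer region still yields the ODE $\dot g \geq (r-\varepsilon)g^\beta$, and the corresponding level-set estimate for the composite subsolution is $x^-(t)$. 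For the second case, where only an upper bound on $u_0$ is assumed, the missing lower bound $u(t_0,x)\geq K(1+|x|)^{-2/(1-m)}$ is again provided by the fast-diffusion smoothing of subsection~\ref{s:heavy}, reducing matters to the first case with $\alpha$ replaced by $2/(1-m)$.

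\smallskip

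The main technical obstacle is the construction and verification of the composite subsolution in part (ii): choosing the correct Barenblatt-type inner profile, the interface $X(t)$, and verifying a $C^1$ matching together with the subsolution inequality throughout the full parameter range $m+2/\alpha\leq\beta<1+1/\alpha$. In part (i), the corresponding subtlety is showing that the compactly supported ``fast'' subsolutions really do exist at arbitrary speed $c$ under $\beta<2-m$, which for a fully rigorous argument requires a careful ODE phase plane analysis of $(V^m)''+cV'+f(V)=0$.
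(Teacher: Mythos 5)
Your part (i) plan correctly identifies the key mechanism --- compactly supported subsolutions travelling at arbitrary speed $c$, available because $\beta < 2-m$ --- and this is indeed the paper's route (Proposition \ref{prop:fin1}). However, the phase plane analysis is not performed directly on $(V^m)''+cV'+f(V)=0$: the paper first applies Engler's change of variables (see the references \cite{Eng-85}, \cite{Gil-Ker-04}), reducing to the \emph{linear}-diffusion ODE $V''+cV'+g(V)=0$ with $g(s):=mf(s)s^{m-1}$, and only afterwards recovers the nonlinear profile via $U_c(x)=V_c(\varphi^{-1}(x))$ with $\varphi(y)=\int_0^y mV_c^{m-1}(s)\,ds$. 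The hypothesis $\beta<2-m$ enters exactly as $\lim_{s\to 0}g(s)/s=+\infty$, which is what excludes the trajectory converging to $(0,0)$ and forces $V_c$ to hit zero with negative slope. Also note that for part (i) the paper does \emph{not} invoke the heavy-tail generation of subsection \ref{s:heavy}: since $U_c$ is bounded by $\delta<\liminf_{x\to-\infty}u_0(x)$ and can be shifted arbitrarily to the left, Assumption \ref{ass:initial} alone suffices; and the upgrade from $\liminf \geq \delta$ to convergence to $1$ uses a compactness/entire-solution argument rather than a direct comparison.

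Your part (ii) shares the two-region structure with Appendix \ref{s:appendix} but proposes the wrong profiles. The paper's subsolution is $v(t,x)=w(t,x)(1-Aw^\eta(t,x))$ for $x>X(t)$, glued to the \emph{constant} $\bigl(\tfrac{1}{A(1+\eta)}\bigr)^{1/\eta}\tfrac{\eta}{1+\eta}$ on $x\leq X(t)$; there is no Barenblatt inner profile, and the outer ansatz is not the bare $g(t)/x^\alpha$. The multiplicative correction $(1-Aw^\eta)$ is precisely what you are missing: $w\mapsto w(1-Aw^\eta)$ has a critical point at $w=(A(1+\eta))^{-1/\eta}$, which is where the interface $X(t)$ is placed, so the $C^1$ matching with the constant inner value is automatic rather than a technical obstacle. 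Without that factor, the bare $g(t)/x^\alpha$ has no zero-slope level to glue to and no built-in cutoff preventing the ODE blow-up from contaminating the subsolution. The appendix also crucially differs from Section \ref{s:FDE-kpp} by taking $\eta>\beta+2$ and, within the outer region, separately treating $w\leq\delta$ (where $\partial_{xx}(v^m)\geq 0$ for free) and $\delta\leq w\leq(A(1+\eta))^{-1/\eta}$ (handled by choosing the starting time $T$ large so the interface is far to the right, where $\varphi'$ and $\varphi^2$ are small); none of this appears in your plan. Your observation that the second case of (ii) reduces to the first via the fast-diffusion tail of subsection \ref{s:heavy} is correct.
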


The first part of Theorem~\ref{th:reste} shows that the speed of propagation is infinite when $1 < \beta < 2 -m$ (we already treated the case $\beta =1$), requiring only Assumption \ref{ass:initial} on the initial datum. In other words, when the Allee effect is small w.r.t. fast diffusion, acceleration occurs regardless of the initial tail. 

The second part of Theorem \ref{th:reste} deals with the situation when $m + \frac{2}{\alpha} \leq \beta < 1 + \frac{1}{\alpha}$. Indeed, in this situation and when the diffusion is of the porous medium type, we have shown in Theorem~\ref{th:acc} that polynomial acceleration occurs. However, in the fast diffusion case, this parameter range was missing from Theorem~\ref{th:acc-FDE}. The reason is that we are not able to accurately locate the level sets. Still, Theorem~\ref{th:reste} shows that the acceleration phenomenon when $0 < m<1$ is at least as strong as in the case $m \geq 1$, as one may have expected.

\begin{rem}\label{rem:upper} In the regime $0<m<1$, $\beta >1$ and $m+\frac 2 \alpha \leq \beta <2-m$ (see Figure \ref{fig:fde}), assuming \eqref{nonlinearity-accbis-FDE} and the upper bound in \eqref{algebraic-acc-FDE}, that is $u_0(x)\leq \frac{\overline C}{x^\alpha}$, a monomial type upper bound on the level sets can be obtained. Indeed, we can use a comparison argument with the case where  $\alpha$ is replaced by a smaller $\alpha _0$ which falls into the results of Theorem \ref{th:acc-FDE}. Hence, for any small $\ep>0$, we let $\alpha _0:=\frac{2}{\beta -m+\ep}$ and 
deduce from Theorem \ref{th:acc-FDE} the upper bound
$$
x^+(t):=\left((\overline r+\ep)\overline C ^{\beta -1}(\beta -1)t\right)^{\frac{\beta-m+\ep}{2(\beta-1)}}.
$$
For the sake of conciseness and because it is a simple consequence of our previous theorems, we omitted this upper bound  in Theorem \ref{th:reste}.
\end{rem}

Using the fact that \lq\lq fast diffusion increases the tail  of solutions'' (see subsection \ref{s:heavy}), we can summarize our results for the case $m\in(0,1)$ in Figure \ref{fig:fde}. Notice that the position of the level sets is not yet completely understood in the regions covered by Theorem~\ref{th:reste}. Indeed, in the region covered by Theorem~\ref{th:reste}~$(ii)$ we are equipped with lower and upper monomial type estimates that typically differ (see Remark \ref{rem:upper}), whereas in the region covered by Theorem~\ref{th:reste}~$(i)$ a monomial type lower bound is not even available. Such difficulties  previously arose in a integro-differential equation \cite{Alf-Cov-preprint}, where the balance between the Allee effect and the tails of the dispersal kernel was studied.

\begin{figure}[!h]
\begin{center}
\includegraphics[scale=1.6]{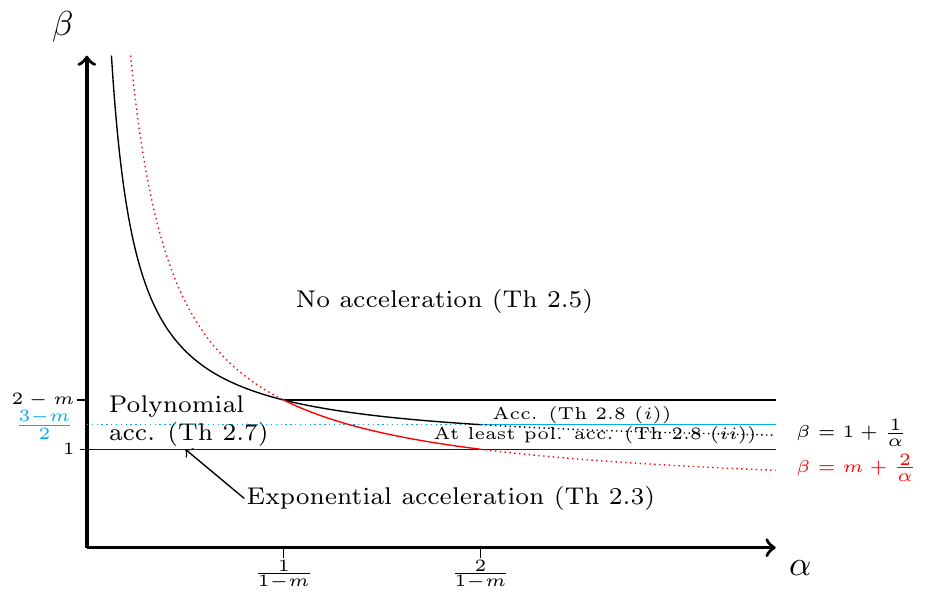}
\caption{Summary of our results in the fast diffusion case, $m\in(0,1)$. The different parameter regions are delimited by the solid lines.} \label{fig:fde}
\end{center}
\end{figure}

\medskip

The paper is organized as follows. In Section \ref{s:prelim}, we prove the preliminary results \eqref{invasion} and~\eqref{zero-a-droite} together with the statement $(i)$ of Theorem~\ref{th:reste}. In Section \ref{s:no-acceleration}, we consider the regime where there is no acceleration, that is we prove Theorem \ref{th:no-acc}. Next, in Section~\ref{s:PME}, we focus on the porous medium regime: we prove simultaneously Theorem~\ref{th:FKPP_porous_FDE} (case $m>1$) and Theorem~\ref{th:acc}. Let us point out that, while we stated those results separately because of the different conclusions (exponential vs. algebraic estimates), the proofs will be almost identical. Then, in Section~\ref{s:FDE-kpp}, we turn to the fast diffusion regime: we complete the proof of Theorem~\ref{th:FKPP_porous_FDE} (case $0<m<1$), and prove Theorem~\ref{th:acc-FDE}. Last, in Appendix \ref{s:appendix}, we complete the proof of Theorem \ref{th:reste} by dealing with statement $(ii)$.

\section{Positive and infinite speeds of propagation}\label{s:prelim}

In this section, we show propositions which provide subsolutions with a support bounded from above and moving with a constant speed $c >0$, depending on the parameters of the equation~\eqref{eq}. This enables us not only to prove statement~$(i)$ of Theorem \ref{th:reste} (infinite speed of propagation when $m\in(0,1)$, $1 \leq \beta < 2-m$), but also to show that level sets always move to the right at least linearly, namely estimate \eqref{invasion} which will be used  several times throughout this paper. After that, we also prove the preliminary result \eqref{zero-a-droite}.

The subsequent propositions all rely on a similar argument. First, since we will look at 
traveling front type solutions or subsolutions (i.e. whose shape remains constant in time in an appropriate moving frame), the equation~\eqref{eq} reduces to a nonlinear ordinary differential equation. By a change of variables introduced by Engler \cite{Eng-85}, see also \cite{Gil-Ker-04},  we are able to further reduce the problem to a situation where diffusion is linear and where we use a phase plane analysis.

\medskip

We start with the case where $g(s):=mf(s)s^{m-1}$ has \lq\lq infinite slope at zero'', meaning that $\lim _{s\to 0} \frac{g(s)}{s}=+\infty$. This covers the case $m\in(0,1)$ and $1< \beta <2-m$ of Theorem~\ref{th:reste}.

\begin{prop}[Material for subsolutions traveling at any speed $c>0$]\label{prop:fin1}
Let $m \in (0,1)$  be given. Assume that $g(s):=mf(s)s^{m-1}$ is such that $\lim _{s\to 0} \frac{g(s)}{s}=+\infty$. Then for any $\delta \in (0,1)$, any $c >0$, there are $x_c >0$ and a decreasing function $U_c  : [0, x_c]\to [0,\delta] $ which solves
\begin{equation}\label{eq:ode_nl}
 (U_c^m)'' + c U_c ' + f(U_c) =0 \quad \text{ on } (0,x_c),\end{equation}
as well as the boundary conditions
\begin{equation}\label{eq:ode_boundary}
U_c (0) = \delta, \qquad U_c ' (0) = 0 , \qquad U_c (x_c ) = 0.
\end{equation}
\end{prop}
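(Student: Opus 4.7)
The plan is to reduce \eqref{eq:ode_nl} to a first-order singular ODE via phase plane analysis, in the spirit of the change of variables due to Engler \cite{Eng-85} (see also \cite{Gil-Ker-04}) which effectively linearises the second-order part of the degenerate equation. Concretely, I would pass to the flux variable $P := (U_c^m)'$ regarded as a function of the unknown $U$; a short computation using $U_c' = P/(m U^{m-1})$ gives
\[
P'(U) = -c - \frac{g(U)}{P}, \qquad P(\delta) = 0,
\]
on $U \in [0,\delta]$. The objective reduces to exhibiting a solution with $P < 0$ on $[0,\delta)$ and $|P(0)| > 0$; after this is done, $U_c$ is reconstructed by integrating $dx/dU = m U^{m-1}/P(U)$, giving
\[
x_c = \int_0^\delta \frac{m U^{m-1}}{|P(U)|}\, dU.
\]

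The construction proceeds in three steps. Near $U=\delta$, the singular starting condition $P(\delta)=0$ is handled via the energy identity $(P^2/2)'_U = -cP - g(U)$: integrating from $U=\delta$ produces the asymptotics $|P(U)| \sim \sqrt{2 g(\delta)(\delta-U)}$, which yields a unique downward branch leaving $(U,P)=(\delta,0)$. On the open interval $(0,\delta)$, $P$ stays strictly negative: indeed, if $P$ were to cross back to $0$ at some interior $U_0 \in (0,\delta)$, then as $P(U) \to 0^-$ for $U$ slightly greater than $U_0$, the ODE would force $dP/dU = -c + g(U)/|P| \to +\infty$, meaning $P$ steeply increases as $U$ grows; this is incompatible with $P(U)<0$ just to the right of $U_0$ and $P(U_0)=0$. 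Finally, the hypothesis $g(s)/s \to +\infty$ as $s \to 0^+$ ensures $|P(0)|>0$: a power-type local analysis of $d|P|/dU = c - g(U)/|P|$ shows that any behaviour $|P|(U) \sim b U^q$ with $b,q>0$ and $|P|\to 0$ at $U=0$ would force $g(s)/s$ to remain bounded at $0$ (the viable balance is $q=1$, $b=c$ with $g(s)=o(s)$), contradicting the hypothesis.

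Given $|P(0)| > 0$, the integral defining $x_c$ is finite: the integrand is $O((\delta-U)^{-1/2})$ near $U=\delta$ and $O(U^{m-1})$ near $U=0$, both integrable since $m>0$. The map $U \mapsto x(U)$ is then a continuous decreasing bijection from $[0,\delta]$ onto $[0,x_c]$, whose inverse defines $U_c$ satisfying \eqref{eq:ode_nl} together with \eqref{eq:ode_boundary}. The main technical obstacle is precisely the role of the hypothesis on $g$: without it, in the fast-diffusion regime $0<m<1$, trajectories may have $|P(0)|=0$ with $|P(U)| \sim cU$ near $0$, which would force $x_c=+\infty$ and obstruct the construction for large speeds $c$ (this is the analogue of the classical KPP minimal-speed obstruction); the super-linearity $g(s)/s \to +\infty$ is exactly what rescues the construction and makes it succeed for \emph{every} $c>0$.
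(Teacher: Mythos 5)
Your proposal takes the same route as the paper, written in slightly different coordinates. The paper first applies Engler's change of variables to pass to the equation $V'' + c V' + g(V) = 0$ with $g(s) = m f(s) s^{m-1}$, and then performs a phase-plane analysis in $(V, V')$. You instead work directly in the $(U,P)$ plane with $P = (U_c^m)'$; since the paper's transformation gives precisely $U = V$ and $P = V'$, the first-order ODE $P'(U) = -c - g(U)/P$ you write down is literally the paper's phase-plane equation. The one genuine simplification your formulation buys is that, by parametrising by $U$ rather than by $y$, you collapse the paper's cases $(i)$ (trajectory reaching the origin in infinite ``time'') and $(ii)$ (in finite ``time'') into the single alternative $|P|(0)=0$, which is conceptually cleaner.

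There is, however, a gap in the key step where you rule out $|P|(0)=0$. Your ``power-type local analysis'' implicitly assumes $|P|(U) \sim b\,U^q$ near $U=0$, but no a priori reason is given why a power-law asymptotic should hold; the contradiction must be obtained without postulating any specific asymptotic form. The correct argument (which is what the paper does in different notation) goes as follows. Since $\frac{d}{dU}\big(|P|(U) - cU\big) = -g(U)/|P|(U) < 0$, the quantity $|P|(U)-cU$ is strictly decreasing, so if $|P|(0^+)=0$ one obtains the pointwise bound $|P|(U) \leq cU$ for all $U\in(0,\delta)$. Then the energy identity $\frac{d}{dU}\big(\tfrac{|P|^2}{2}\big) = c|P| - g(U)$, integrated from $0$ to $U$, yields
\[
0 \leq \frac{|P|^2(U)}{2} \leq \int_0^U \big(c^2 s - g(s)\big)\, ds,
\]
which forces $\int_0^U g(s)\,ds \leq \tfrac{c^2U^2}{2}$ for all small $U$, contradicting $\lim_{s\to 0} g(s)/s = +\infty$. (Equivalently, one can argue with a sequence $U_k\to 0$ on which $\tfrac{d|P|}{dU}\geq 0$, i.e.\ $|P|(U_k)\geq g(U_k)/c$, combined with $|P|(U_k)\leq cU_k$, to get $g(U_k)/U_k\leq c^2$; this is the exact translation of the paper's sequence $y_k$.) Your remark about the balance $q=1$, $b=c$ shows you had the right intuition, but the writeup should replace the ansatz with one of these inequalities. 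The remaining steps — the energy asymptotics $|P|\sim\sqrt{2g(\delta)(\delta-U)}$ near $U=\delta$, the exclusion of interior zeros, and the integrability of $x_c=\int_0^\delta mU^{m-1}/|P|\,dU$ — are all sound.
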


\begin{proof} Let $\delta\in(0,1)$ and $c >0$ be given. We first introduce the equation
\begin{equation}\label{eq:chv}
V'' + c V' + g(V)= 0,
\end{equation}
where $g(s):= m f(s) s^{m-1}$ is of the class $C^{1}$ on $(0,1]$ but not on $[0,1]$. Denote by $V_c(y)$ its solution starting from $V_c(0)=\delta$, $V_c '(0)=0$. By a phase plane analysis, one infers that one of the three following statements holds:
\begin{enumerate}[$(i)$]
\item 
  $V_c$ remains positive on $(0,+\infty)$, $V_c'$ remains negative on $(0,+\infty)$, $(V_c(y),V_c '(y))\to (0,0)$ as $y\to +\infty$, and there exists a sequence $y_k \to +\infty$ such that
\begin{equation}\label{sous-isocline}
  c V_c'(y_k)+g(V_c (y_k))\leq 0. 
\end{equation}
\item there is $y_c>0$ such that $V_c$ remains positive on $(0,y_c)$, $V_c'$ remains negative on $(0,y_c)$, $(V_c(y_c),V_c '(y_c))= (0,0)$, and there exists an increasing sequence $y_k \to y_c$ such that
\begin{equation*}\label{sous-isocline-bis}
  c V_c'(y_k)+g(V_c (y_k))\leq 0. 
\end{equation*}
\item  there is $y_c>0$ such that $V_c (y_c)=0$ and $V_c '<0$ on $(0,y_c]$.
\end{enumerate}
Note that Cauchy-Lipschitz theorem does not apply here, so that case $(ii)$ cannot be immediately excluded. We will now show that $(iii)$ necessarily occurs.

Let us assume $(i)$ and derive a contradiction. Integrating the inequality $-c V_c'\geq V_c ''$ from~$y$ to~$+\infty$ we get
$c V_c (y)\geq -V_c'(y)$. Hence, in view of \eqref{sous-isocline}, we obtain
\begin{equation*}
\label{absurde}
g(V_c(y_k))\leq c ^{2}V_c(y_k)\quad \text{for $k$ large enough},
\end{equation*}
which contradicts the fact that $\lim_{s \to 0 } \frac{g(s)}{s} = +\infty $. In a similar fashion, in the case $(ii)$, integrating the inequality $-c V_c ' \geq V_c ''$ between $y$ and $y_c$ leads to $c V_c (y) \geq - V'_c (y)$. This again results in a contradiction, and we conclude that the solution $V_c$ satisfies~$(iii)$.

Now define
\begin{equation}\label{chgt-var}
U_c (x) : = V_c (\varphi^{-1} (x)) ,
\end{equation}
where
$$\varphi (y) :=  \int_0^y m V_c^{m-1} (s) ds .$$
Note that $\varphi$ is indeed a bijection between $[0,y_c]$ and $[0,x_c]$, with
$$x_c := \int_0^{y_c} m V_c^{m-1} (s) ds.$$
Here we used the fact $V'_c (y_c) < 0$ and $m-1 \in (-1,0)$, hence $V_c^{m-1}$ is integrable on $(0,y_c)$.

Then one can compute that \eqref{eq:chv} rewrites as 
$$m^2 U_c^{2m-2} (\varphi (y)) U_c '' (\varphi (y)) + m^2 (m-1) U_c^{2m-3} (\varphi (y)) ( U_c ' (\varphi (y)))^2 $$
$$+ m c U_c^{m-1} (\varphi (y))  U_c ' (\varphi (y)) + m f(U(\varphi (y)) U(\varphi (y))^{m-1} = 0,$$
thus
$$m U_c^{m-1} U_c '' + m (m-1)  U_c^{m-2} (U_c ' )^2 + c U_c ' + f(U_c) = 0,$$
which is exactly \eqref{eq:ode_nl}. The monotonicity and the boundary conditions \eqref{eq:ode_boundary} are straightforward, and the proposition is proved. \end{proof}

We are now in the position to prove Theorem \ref{th:reste} $(i)$, whose assumptions $m\in(0,1)$, $1< \beta <2-m$ and \eqref{nonlinearity-acc-FDE} allow to use the above proposition.

\begin{proof}[Proof of Theorem \ref{th:reste} $(i)$]
Let us fix $c >0$ and choose $\delta < \liminf_{x\to -\infty} u_0 (x)$. We now extend~$U_c$ by $\delta$ when $x \leq 0$, and by 0 when $x \geq x_c$. For convenience, we still denote the resulting function from $\mathbb{R}$ to $\mathbb{R}$ by $U_c$. We also find some $x_0 \in \mathbb{R}$ so that $U_c (\cdot + x_0) \leq u_0 (\cdot)$. From Proposition~\ref{prop:fin1}, one can check that $U_c (x - c t + x_0)$ is a subsolution of \eqref{eq} in the domain $\{(t,x): t>0,\, x<x_c  -x_0+c t\}$. Recalling that $u(t,x)>0$ for all $t>0$, $x\in \R$, we deduce from the maximum principle that $U_c (x - c t +x_0)\leq u(t,x)$ for all $t>0$, $x\in \R$. We thus infer that 
$$\liminf_{t \to +\infty} \inf_{x \leq c ' t} u(t,x)  \geq \delta ,$$
for any $c ' < c$.

Let us now proceed by contradiction and assume that there exist sequences $t_n \to +\infty$ and~$x_n$ such that $x_n \leq c ' t_n$ and
$$\lim_{n \to +\infty} u(t_n,x_n) < 1.$$
By standard parabolic estimates (thanks to the lower bound on $u$ above, the degeneracy at~0 raises no issue here), we find that $u(t+t_n,x+x_n)$ converges locally uniformly to an entire (i.e. for all $t \in \mathbb{R}$) solution $u_\infty \geq \delta$ of \eqref{eq}. Since $f$ is of the monostable type, it follows that~$u_\infty \geq 1$, which contradicts our choice of $t_n$ and $x_n$. 

Since $c$, hence $c '$, could be chosen arbitrarily large, this proves that the solution propagates with infinite speed when $m \in (0,1)$ and $1 < \beta < 2-m$.
\end{proof}

For the next result, we only require Assumption  \ref{ass:f} concerning the nonlinearity $f$.

\begin{prop}[Material for subsolutions traveling at some speed $c _0>0$]\label{prop:fin2}
Let $m>0$ be given. Then for any $\delta  \in (0,1)$, there exists $c _0 >0$ small enough such that the conclusions of Proposition \ref{prop:fin1} hold.
\end{prop}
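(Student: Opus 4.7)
The plan is to mimic the proof of Proposition \ref{prop:fin1} by applying the same change of variables $\varphi(y) = \int_0^y m V_c^{m-1}(s)\, ds$ and $U_c = V_c \circ \varphi^{-1}$, thereby reducing the problem to finding a decreasing solution $V_c$ of
\[ V_c'' + c V_c' + g(V_c) = 0, \quad V_c(0) = \delta,\ V_c'(0) = 0, \]
with $g(s) := m f(s) s^{m-1}$, such that $V_c$ reaches $0$ at some finite $y_c$ with $V_c'(y_c) < 0$ (i.e.\ case $(iii)$ of the trichotomy introduced in the previous proposition). The obstacle compared to Proposition \ref{prop:fin1} is that we no longer have $g(s)/s \to +\infty$ as $s \to 0$, so the contradiction arguments ruling out cases $(i)$ and $(ii)$ no longer apply. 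Instead, we exploit the freedom to take $c$ small.

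First, I would analyze the conservative case $c = 0$. The energy $H(y) = \tfrac{1}{2}V_0'(y)^2 + G(V_0(y))$, where $G(s) := \int_0^s g(\sigma)\, d\sigma$, is conserved equal to $G(\delta)$. Since $g > 0$ on $(0,1)$ by Assumption \ref{ass:f}, $G$ is strictly increasing on $[0, \delta]$. Hence $V_0$ strictly decreases from $\delta$ to $0$ over the interval $[0, y_0]$ with
\[ y_0 = \int_0^\delta \frac{dV}{\sqrt{2(G(\delta) - G(V))}}. \]
This integrand is integrable on $[0,\delta]$: near $V = \delta$, $G(\delta) - G(V) \sim g(\delta)(\delta - V)$ gives an integrable square-root singularity, while near $V = 0$ it is bounded. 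Thus $y_0 < +\infty$ and $V_0'(y_0) = -\sqrt{2 G(\delta)} < 0$, so the trajectory crosses the line $\{V = 0\}$ transversally.

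Next, for $c > 0$ small, I would invoke continuous dependence of ODE solutions on parameters. Since $g$ is $C^1$ on $(0,1]$ (the region in which the trajectory remains until it exits), standard ODE theory ensures that, for any $\eta > 0$, $V_c \to V_0$ in $C^1$ on $[0, y_0 - \eta]$ as $c \to 0$. Choosing $\eta$ so small that $V_0'(y_0 - \eta) < -\tfrac{1}{2}\sqrt{2G(\delta)}$ and then $c_0 > 0$ sufficiently small, one ensures that $V_{c_0}$ reaches $y_0 - \eta$ with $V_{c_0}$ small and $V_{c_0}'(y_0 - \eta)$ strictly negative. Then, by transversality of the $c = 0$ trajectory at $\{V = 0\}$ together with continuous dependence, $V_{c_0}$ also crosses $\{V = 0\}$ at some finite $y_{c_0}$ close to $y_0$ with $V_{c_0}'(y_{c_0}) < 0$, placing us in case $(iii)$ of the previous proposition.

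To conclude, I would argue exactly as in Proposition \ref{prop:fin1}. The change of variables $\varphi$ is a bijection between $[0, y_{c_0}]$ and $[0, x_{c_0}]$ because $V_{c_0}^{m-1}$ is integrable on $(0, y_{c_0})$ for any $m > 0$: it is bounded near $0$ (where $V_{c_0} \approx \delta$) and behaves like $(y_{c_0} - s)^{m-1}$ near $y_{c_0}$ (since $V_{c_0}'(y_{c_0}) < 0$), with exponent $m - 1 > -1$. The very same computation as in Proposition \ref{prop:fin1} then shows that $U_{c_0} = V_{c_0} \circ \varphi^{-1}$ satisfies \eqref{eq:ode_nl} together with the boundary conditions \eqref{eq:ode_boundary}. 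The main obstacle in carrying out this plan is really the transversality/continuous dependence step, which here plays the role that the phase-plane contradiction arguments played in Proposition \ref{prop:fin1}, and which is precisely why we must restrict to $c_0 > 0$ small.
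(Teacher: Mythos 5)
Your proposal is correct in substance and reaches the desired conclusion, but it follows a genuinely different route from the paper, and one step is left more informal than it should be. The paper first remarks that $g(s)=mf(s)s^{m-1}$ may fail to be Lipschitz at $s=0$ (this happens, for instance, when $0<m<1$ and $f'(0)>0$), and sidesteps this by replacing $g$ with a \emph{smaller Lipschitz ignition-type nonlinearity} $\tilde g$ with $\tilde g\equiv 0$ on $[0,\delta/2]$ and $0\le \tilde g\le g$. Continuous dependence for the $\tilde g$-equation is then unproblematic on all of $[0,1]$, and the conclusion for $g$ follows from a phase-plane comparison: $\tilde g\le g$ forces the $g$-trajectory to lie below the $\tilde g$-trajectory, so it too hits $V=0$ transversally. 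You instead work directly with $g$: the energy argument at $c=0$ is sound (in particular $G(\delta)<\infty$ since $g(\sigma)=O(\sigma^m)$ as $\sigma\to 0$), and restricting continuous dependence to $[0,y_0-\eta]$ — where the trajectory stays in $(0,1]$ and $g$ is $C^1$ — is a legitimate way to skirt the singularity. What you leave hand-wavy is the passage from $y_0-\eta$ to the actual zero crossing of $V_{c_0}$ ("transversality ... together with continuous dependence"). To make this precise, observe that $\frac{d}{dy}\bigl(e^{cy}V_{c_0}'(y)\bigr)=-e^{cy}g(V_{c_0}(y))\le 0$, so for $y>y_0-\eta$ one has $V_{c_0}'(y)\le V_{c_0}'(y_0-\eta)e^{-c(y-y_0+\eta)}\le -Ke^{-c(y-y_0+\eta)}$ as long as $V_{c_0}>0$; integrating shows $V_{c_0}$ must reach zero in finite time as soon as $V_{c_0}(y_0-\eta)<K/c_0$, which holds for $c_0$ small. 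This quantitative step plays exactly the role that the ignition cut-off $\tilde g\equiv 0$ on $[0,\delta/2]$ plays in the paper, where the ODE becomes $V''+cV'=0$ and can be solved explicitly. With that made explicit, the two proofs buy the same thing; the paper's regularization is arguably cleaner because it avoids the singularity altogether, while yours is more direct and avoids introducing an auxiliary nonlinearity.
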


\begin{proof}Let $\delta \in (0,1)$ be given. Since $g(s):=mf(s)s^{m-1}$ may be non Lipschitz continuous, we first consider a smaller Lipschitz continuous ignition type nonlinearity. Precisely, we consider a Lipschitz continuous function $\tilde g$ such that $0\leq \tilde g \leq g$, $\tilde g=0$ on $[0,\frac \delta 2]\cup \{1\}$, $\tilde g>0$ on $(\frac \delta 2,1)$. The extensions of $g$, $\tilde g$ by zero outside $[0,1]$ are still denoted by $g$, $\tilde g$. From a phase plane analysis we see that $\tilde V_0(y)$ the solution of the Cauchy problem
$$
V''+\tilde g (V)=0, \quad V(0)=\delta, \quad V'(0)=0,
$$
is global, and that there is $\tilde y_0>0$ such that $\tilde V_0(\tilde y_0)=0$ and $\tilde V_0'<0$ on $(0,\tilde y_0]$. From the continuous dependance of solutions to the Cauchy problem 
\begin{equation}\label{cauchy-tilde}
V''+cV'+\tilde g (V)=0, \quad V(0)=\delta, \quad V'(0)=0,
\end{equation}
w.r.t. parameter $c$, we infer that for $c _0 >0$ small enough the solution satisfies statement $(iii)$ of the proof of Proposition \ref{prop:fin1}. Since $\tilde g\leq g$, we deduce by comparison (in the phase plane, trajectories associated with $g$ are below those associated with $\tilde g$) that the same conclusion  holds for the Cauchy problem \eqref{cauchy-tilde} with $g$ in place of $\tilde g$. Next we define \eqref{chgt-var} and the end of the  proof is similar to that of Proposition \ref{prop:fin1}. 
\end{proof}

We are now in the position to prove \eqref{invasion}.

\begin{proof}[Proof of \eqref{invasion}] The proof uses the above proposition and proceeds as that of Theorem \ref{th:reste}. Let us again notice that our assumption  $u_0 >0$ implies $u(t,x)>0$ for all $t>0$, $x\in \R$ so that the degeneracy/singularity at 0 (depending on the diffusion regime) raises no issue and we can apply the maximum principle on the moving truncated domain $\{(t,x): t>0,\, x<x_{c _0}  -x_0+c_0 t\}$. Details are omitted.

We conclude that, for any $m >0$, there exists $c _0>0$ small enough such that
$$\lim_{t \to +\infty} \inf_{x \leq c _0 t } u(t,x) = 1,$$
as we announced in the introduction.
\end{proof}

To conclude this section, we prove \eqref{zero-a-droite}.

\begin{proof}
[Proof of \eqref{zero-a-droite}] {}From Assumption \ref{ass:f}, there is $\overline r>0$ such that \eqref{nonlinearity-accbis-kpp} holds. By comparison, it is thus enough to consider the solution of
\begin{equation*}
\label{pardessus}
\partial _t u=\partial_{xx} (u^{m})+\overline r u, \quad u(0,\cdot)=u_0.
\end{equation*}
Next, observe that letting (see \cite{Aud-Vaz-16, Aud-Vaz-17})
\begin{equation}\label{vtau}
v(\tau,x):=e^{-\overline r t}u(t,x),
\end{equation}
with
$$
\tau(t):=\begin{cases} \frac 1{(1-m)\overline r}\left(1-e^{-(1-m)\overline r t}\right) &\text{
if } 0<m<1
\\
t  &\text{ if } m=1\\
  \frac 1{(m-1)\overline r}\left( e^{(m-1)\overline r t}-1\right)  &\text{ if  } m>1,
\end{cases}
$$
we see that $v(\tau,x)$ solves ---on the time interval $(0,\tau_\infty)$ with $\tau_\infty=+\infty$ if $m\geq 1$, $\tau_\infty=\frac{1}{(1-m)\overline r}$ if $0<m<1$--- 
\begin{equation}
\label{eq-w}
\partial _\tau v=\partial _{xx} (v^{m}), \quad v(0,\cdot)=u_0.
\end{equation}

Now consider $\ep\in(0,1)$. Select $x_0$ such that $u_0(x)\leq\ep$ for all $x\geq x_0$.  In order to construct a supersolution to \eqref{eq-w}, define 
$$
w(\tau,x):=\min\left(1,\ep+e^{-\mu(x-x_0-\tau)}\right),
$$
where $\mu>0$ is to be selected. Clearly $u_0\leq w(0,\cdot)$. For the points $(\tau,x)$ such that $w(\tau,x)<1$, we have that 
$$
\partial _\tau w-\partial _{xx}(w^m)=\mu e^{-\mu(x-x_0-\tau)}-\mu ^2 m e^{-\mu(x-x_0-\tau)}w^{m-1}-\mu ^{2}m(m-1)e^{-2\mu(x-x_0-\tau)}w^{m-2}
$$
is positive if $\mu>0$ is sufficiently small (recall $\ep\leq w <1$). Since $v(\tau,x)<1$, we can apply a comparison principle and deduce that $v(\tau,x)\leq w(\tau,x)$ for all $\tau\in(0,\tau _\infty)$, $x\in\R$. Hence, for a given $\tau _0\in(0,\tau _\infty)$, we have $\limsup _{x\to +\infty} v(\tau _0,x) \leq \ep$. 
Since $\ep\in(0,1)$ could be chosen arbitrarily small, we get $\lim _{x\to +\infty} v(\tau _0,x)=0$, which in view of \eqref{vtau}, yields \eqref{zero-a-droite}.
\end{proof}

\section{No acceleration regime}\label{s:no-acceleration}

In this short section, we prove Theorem \ref{th:no-acc}. The formal argument is very simple: in order to
avoid acceleration, we aim at finding a speed $c>0$ and a power $p>0$ such that $w(z):=\frac 1{z^p}$ is a supersolution of the associated traveling wave equation for $z>>1$, that is
$$
(w^{m})''(z)+cw'(z)+f(w(z))\leq 0.
$$
In view of \eqref{nonlinearity-no-acc} this is enough to have
$$
\frac{mp(mp+1)}{z^{mp+2}}-\frac{cp}{z^{p+1}}+\frac{\overline r}{z^{p\beta}}\leq 0 \quad \text{ for large } z>>1,
$$
which requires $p+1\leq p\beta$ and $p+1\leq mp+2$. The former condition is never satisfied if~$\beta =1$, and we recast it $\frac{1}{\beta-1}\leq p$; the latter condition is always true when $m\geq 1$ but reduces to $p\leq \frac{1}{1-m}$ when $m<1$. 

On the other
 hand we also need the ordering at initial time, which in view of \eqref{algebraic-no-acc}, requires
 $p\leq \alpha$. Putting these conditions together  one needs $\beta \geq \max\left(1+\frac 1\alpha,2-m\right)$, so that the  condition  \eqref{alpha-beta-no-acc} arises naturally. Let us now make this formal argument precise.

\medskip

We define
$$
p:=\frac{1}{\beta-1},\quad w(z):=\frac{K}{z^p} \quad \text{ for } z\geq z_0:=K^{1/p},
$$
where $K>1$.

\begin{lem}[Supersolutions traveling at constant speed]\label{lem:sursoltw} Let assumptions \eqref{alpha-beta-no-acc} and \eqref{nonlinearity-no-acc} of Theorem \ref{th:no-acc} hold. Then, for any $K>1$, there is $c>0$ such that
$$
(w^{m})''(z)+cw'(z)+f(w(z))\leq 0, \quad \forall z\geq z_0.
$$
\end{lem}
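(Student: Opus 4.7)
The plan is to make the formal computation sketched in the text rigorous, reducing the matter to an elementary algebraic inequality in $z$. First I would write out the derivatives, $w'(z) = -Kp/z^{p+1}$ and $(w^m)''(z) = K^m mp(mp+1)/z^{mp+2}$, and exploit the defining identity $p\beta = p+1$ (which follows from $p = 1/(\beta-1)$); this is precisely what aligns the drift term $c w'$ with the reactive bound at the common power $1/z^{p+1}$.

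Before substituting I need to upgrade \eqref{nonlinearity-no-acc} so that $f(s) \leq \overline r s^\beta$ holds on all of $[0,1]$, not just on $[0,s_0]$, since at the left endpoint $w(z_0) = 1 > s_0$. This is painless: on $[s_0,1]$ the function $f$ is bounded (by Assumption~\ref{ass:f}) while $s^\beta \geq s_0^\beta$, so replacing $\overline r$ by $\tilde r := \max\bigl(\overline r,\ \sup_{[0,1]} f / s_0^\beta\bigr)$ gives $f(s) \leq \tilde r s^\beta$ on the whole of $[0,1]$. Plugging in and multiplying through by $z^{mp+2}$, the desired inequality takes the form
\begin{equation*}
K^m mp(mp+1) + (\tilde r K^\beta - cKp)\, z^{(m-1)p+1} \;\leq\; 0, \qquad z \geq z_0 = K^{1/p}.
\end{equation*}

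At this stage I would invoke the hypothesis \eqref{alpha-beta-no-acc}: the part $\beta \geq 2-m$ is equivalent to $(1-m)p \leq 1$, i.e.\ $(m-1)p + 1 \geq 0$, so $z \mapsto z^{(m-1)p+1}$ is nondecreasing on $[z_0,+\infty)$. Choosing $c$ large enough that $\tilde r K^\beta - cKp < 0$ therefore makes the left-hand side non-increasing in $z$, and it suffices to verify the inequality at $z = z_0$, where $z_0^{(m-1)p+1} = K^{m-1+1/p}$. The whole matter reduces to the explicit scalar condition
\begin{equation*}
c \;\geq\; \frac{m(mp+1)}{K^{1/p}} + \frac{\tilde r K^{\beta-1}}{p},
\end{equation*}
which can trivially be met. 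The borderline case $(m-1)p + 1 = 0$ (i.e.\ $\beta = 2-m$ in the fast diffusion regime) requires no special treatment, since there $z^{(m-1)p+1} \equiv 1$ and the inequality is already constant in $z$. The main point to keep in mind is that only $\beta \geq 2-m$ is used here to secure the sign of the exponent $(m-1)p + 1$; the other half $\beta \geq 1 + 1/\alpha$ of \eqref{alpha-beta-no-acc} is needed only later, when $w$ is compared to the initial data through the tail assumption \eqref{algebraic-no-acc}.
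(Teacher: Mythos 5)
Your proof is correct. You arrive at the same core computation as the paper — the identities $p\beta=p+1$, $(m-1)p+1\geq 0$, and the resulting alignment of powers — but you organize the argument differently. The paper works with the local reaction bound $f(s)\leq\overline r s^\beta$ on $[0,s_0]$ only, which forces a two-region argument: a far-field region $z\geq z_2$ (where $w(z)\leq s_0$ and the algebraic comparison applies) and a compact region $z_0\leq z\leq z_2$ (where $f$ is simply bounded by $\Vert f\Vert_{L^\infty(0,1)}$ and $c$ is enlarged a second time). You instead first globalize the reaction bound to $f(s)\leq\tilde r s^\beta$ on all of $[0,1]$ (using $f/s^\beta\leq\Vert f\Vert_\infty/s_0^\beta$ on $[s_0,1]$), which eliminates the need for the compact-region step; then, after clearing the factor $z^{mp+2}$, you observe that the reduced quantity is non-increasing in $z$ once $c>\tilde r(\beta-1)K^{\beta-1}$, so it suffices to check the single boundary point $z=z_0$, yielding the explicit threshold $c\geq m(mp+1)K^{-1/p}+\tilde r K^{\beta-1}/p$. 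This buys a cleaner, one-shot verification with an explicit constant, at the small cost of working with $\tilde r$ rather than $\overline r$; the paper's splitting is more standard and keeps $\overline r$ in the asymptotic region. Both correctly isolate $\beta\geq 2-m$ as the only half of \eqref{alpha-beta-no-acc} used in this lemma.
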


\begin{proof} In view of \eqref{nonlinearity-no-acc}, if $z\geq z_1:=\left(\frac{K}{s_0}\right)^{1/p}>z_0$ then $w(z)=\frac{K}{z^p}\leq s_0$ so that
\begin{eqnarray*}
(w^{m})''(z)+cw'(z)+f(w(z))&\leq & \frac{K^{m}mp(mp+1)}{z^{mp+2}}-\frac{cKp}{z^{p+1}}+\frac{\overline rK^\beta}{z^{p\beta}}\\
&=&\frac{K^{m}mp(mp+1)}{z^{mp+2}}-\frac{cKp-\overline rK^{\beta}}{z^{p+1}}.
\end{eqnarray*}
Recalling that $p+1\leq mp +2$ and choosing $c>\overline r(\beta -1)K^{\beta-1}$, the above is clearly negative
for $z$ large enough, say $z\geq z_2$. Last, on the remaining
compact region $z_0\leq z\leq z_2$, we have
\begin{eqnarray*}
(w^{m})''(z)+cw'(z)+f(w(z))&=&\frac{K^{m}mp(mp+1)}{z^{mp+2}}-\frac{cKp}{z^{p+1}}+f(w(z))\\
&\leq& \frac{K^{m}mp(mp+1)}{z_0 ^{mp+2}}-\frac{cKp}{z_2^{p+1}}+\Vert f\Vert _{L^{\infty}(0,1)}\\
&\leq & 0
\end{eqnarray*}
by enlarging $c$ if necessary.
\end{proof}

\begin{proof}[Proof of Theorem \ref{th:no-acc}] We select $K=\max(1,\overline C)$, where $\overline C>0$ is
  the constant that appears in \eqref{algebraic-no-acc}, and $c>0$ the associated speed given by the above lemma. We then introduce
$$
v(t,x):=\min\left(1,w(x-x_0+1-ct)\right),
$$
so that 
$$
v(0,x)=\min\left(1,\frac{K}{(x-x_0+1)^p}\right)\geq u_0(x),
$$
in view of $u_0\leq 1$, the assumption on the tail \eqref{algebraic-no-acc}, $K\geq \overline C$ and $p=\frac{1}{\beta-1}\leq \alpha$.

Recall that $u < 1$ for positive times. Also from the above lemma we have $(\partial _t v-\partial _{xx}(v^{m})-f(v))(t,x)=(-cw'-(w^{m})''-f(w))(z) \geq 0 $ in the region where $v(t,x)<1$, that is $z:= x - x_0 +1 - ct > z_0$. In other words, $v$ is a (generalized) supersolution of equation~\eqref{eq}, and applying a comparison principle on a right half-domain we get that
$$
u(t,x)\leq v(t,x)=\min\left(1,w(x-x_0+1-ct)\right).
$$
Now, let $\lambda\in(0,1)$ be given. In view of \eqref{nonvide}, for $t\geq t_\lambda$,  we can pick $x\in E_\lambda(t)$, and the above inequality enforces
$$
x\leq x_0-1+\left(\frac{K}{\lambda}\right)^{\beta-1}+ct\leq (c+1)t,
$$
for all $t\geq T_\lambda$, if $T_\lambda \geq t_\lambda$ is sufficiently large. This proves the
 upper bound in \eqref{no-acc}. The lower bound in \eqref{no-acc} being known since \eqref{nonvide}, this completes the proof of Theorem \ref{th:no-acc}.
 \end{proof}
 
 \section{Acceleration regime for porous medium diffusion}\label{s:PME}

In this section, we prove both Theorem~\ref{th:FKPP_porous_FDE} in the case $m>1$, and Theorem~\ref{th:acc}. Throughout this section, we are thus equipped with $m>1$, $\alpha >0$, as well as $1 \leq \beta<1+\frac{1}{\alpha}$, and we assume that \eqref{algebraic-acc-kpp}, \eqref{nonlinearity-acc} and \eqref{nonlinearity-accbis} hold.

\subsection{Lower bound on the level sets in \eqref{levelset-kpp} and \eqref{levelset}}\label{ss:lower}

Notice that, in view of \eqref{algebraic-acc-kpp} and the comparison principle, we only need to consider the case where
\begin{equation}
\label{algebraic-acc2}
u_0(x)= \frac{C}{x^\alpha},\quad \forall x\geq x_0.
\end{equation}
By another comparison argument, we can also assume without loss of generality that $u_0\in C^2(\R)$.
\medskip

\noindent{\bf An accelerating small bump as a subsolution.} The main difficulty is to construct a subsolution which has the form of a small bump and travels to the
 right while accelerating. In this $m>1$ regime it actually turns out that the ones constructed in \cite{Ham-Roq-10} for $m=1$, $\beta=1$, and in \cite{Alf-tails} for $m=1$, $\beta>1$ still work. We start with some preparations.

\medskip

Let $\ep>0$ small be given. We first introduce $\eta >0$ such that
\begin{equation*}
\label{enplus}
\beta<1+\eta .
\end{equation*}
Then we select a $\rho>0$ such that
\begin{equation*}
\label{def-rho}
\max\left(\frac{r \beta }{1+\eta},r-\ep\right)<\rho <r.
\end{equation*}
Now define $w(t,x)$ the solution of
\begin{equation*}
\label{edo-w}
\partial _t w(t,x)=\rho w^\beta (t,x),\quad w(0,x)=u_0(x).
\end{equation*}
Then depending on $\beta$, we have either
\begin{equation}
\label{def-w-kpp}
w(t,x):=u_0(x) e^{\rho t},
\end{equation}
if $\beta =1$, or
\begin{equation}\label{def-w}
w(t,x):=\frac{1}{\left(\frac{1}{u_0^{\beta-1}(x)}-\rho (\beta -1)t\right)^{\frac{1}{\beta -1}}}\quad \text{ for } 0\leq t <T(x):=\frac{1}{\rho(\beta-1)u_0^{\beta-1}(x)},
\end{equation}
if $\beta >1$.
\begin{rem} A straightforward computation shows that in both cases, the level sets of $w$ are consistent with the conclusions of Theorems~\ref{th:FKPP_porous_FDE} and \ref{th:acc}. Notice also that, in the case $\beta >1$, the interval of existence $(0,T(x))$ of the solution $w(t,x)$ becomes large as $x\to+\infty$. Indeed, in view of \eqref{algebraic-acc2},
$$
T(x)= \frac{x^{\alpha(\beta -1)}}{\rho(\beta-1)C^{\beta -1}}, \quad \forall x\geq x_0.
$$
In particular, since $0 < \alpha (\beta -1)<1$, we will be able to observe acceleration (a large time phenomenon) in the subdomain where $w$ is well defined. 
\end{rem}
Straightforward computations yield 
\begin{eqnarray*}
\partial _{x} w(t,x)&=&\varphi(x)w^{\beta}(t,x)\\
\partial _{xx} w(t,x)
&=& \varphi'(x)w^{\beta}(t,x)+\beta \varphi^{2}(x)w^{2\beta-1}(t,x),
\end{eqnarray*}
 where 
 \begin{equation}
 \label{varphi}
  \varphi(x):=\frac{u_0'(x)}{u_0^\beta(x)},
 \end{equation}
so that
\begin{eqnarray}
\partial _{x} (w^{m})(t,x)&=&m\varphi(x)w^{m+\beta-1}(t,x) \nonumber \label{calcullaplacien1}\\
\partial _{xx} (w^{m})(t,x)
&=& m\varphi'(x)w^{m+\beta-1}(t,x)+m(m+\beta-1)\varphi^{2}(x)w^{m+2\beta-2}(t,x).\label{calcullaplacien3}
\end{eqnarray}
In view of \eqref{algebraic-acc2} we have
\begin{equation*}\label{defgh-bis}
\varphi'(x)=\frac{\alpha(1-\alpha(\beta-1))}{C^{\beta-1}x^{2-\alpha(\beta-1)}}, \quad \varphi ^{2}(x)=\left(\frac{\alpha}{C^{\beta-1}x^{1-\alpha(\beta-1)}}\right)^{2},  \quad \forall x\geq x_0.
\end{equation*}
Since $\beta<1+\frac 1 \alpha$  both $\varphi'(x)$ and $\varphi ^{2}(x)$ tend to zero as $x\to +\infty$. Let us therefore select~$x_1>x_0$ such that
\begin{equation}
\label{loin1}
m \vert \varphi'(x)\vert  \leq r-\rho \quad \text{ and }\quad m\vert \varphi'(x)\vert+m(2m+\beta+\eta -1)\varphi^2(x) \leq \rho-\frac{r\beta}{1+\eta},\quad \forall x\geq x_1.
\end{equation}
Now, Assumption \ref{ass:initial} implies that
\begin{equation}
\label{def-kappa}
\kappa:=\inf _{x\in(-\infty,x_1)} u_0(x)\in(0,1] .
\end{equation}
Last, we select $A>0$ large enough so that
\begin{equation*}
\label{def-A}
A> \frac{1}{\kappa ^{\eta}},
\end{equation*}
and
\begin{equation}
\label{def-A2}
\frac{\eta}{1+\eta}\frac{1}{(A(1+\eta))^{1/\eta}}\leq s_0,
\end{equation}
where $s_0$ is as in \eqref{nonlinearity-acc}.
Equipped with the above material, we are now in the position to construct the desired subsolution.

\begin{lem}[An accelerating subsolution]
\label{lem:sub} Let $m>1$ and the assumptions of either Theorem~\ref{th:FKPP_porous_FDE} or Theorem~\ref{th:acc} hold. Further assume that \eqref{algebraic-acc2} holds.

Define
$$
v(t,x):=\max\left(0, w(t,x)-Aw^{1+\eta}(t,x)\right).
$$
Then
\begin{equation}
\label{comparison}
v(t,x)\leq u(t,x),\quad \forall (t,x)\in [0,+\infty)\times \R.
\end{equation}
\end{lem}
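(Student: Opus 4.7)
The plan is to show that $v$ is a weak subsolution with a moving free boundary, and then to appeal to the comparison principle. At $t=0$ the inequality $v(0,\cdot)\leq u_0$ is automatic since $v\leq w$ by construction and $w(0,\cdot)=u_0$; on the set where $Aw^\eta\geq 1$ one has $v=0$, so $v(0,\cdot)\leq u_0$ with strict inequality in a left half-line. The first substantive step is to locate the support of $v$. The choice $A>1/\kappa^\eta$ together with the definition \eqref{def-kappa} of $\kappa$ and the monotonicity in $t$ of $w$ (since $\partial_t w=\rho w^\beta\geq 0$) forces $Aw(t,x)^\eta\geq A\kappa^\eta>1$ for every $t\geq0$ and every $x<x_1$. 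Hence $\{v>0\}\subset\{(t,x):x\geq x_1,\ t<T(x)\}$, which is exactly the region where \eqref{loin1} is available and where the explicit formulas for $w$, $\partial_t w$ and $\partial_{xx}(w^m)$ apply. In particular \eqref{def-A2} ensures that $v\leq \tfrac{\eta}{1+\eta}(A(1+\eta))^{-1/\eta}\leq s_0$ on the support of $v$, so the reaction assumption \eqref{nonlinearity-acc} (or \eqref{nonlinearity-acc-kpp} in the KPP case) gives $f(v)\geq r v^\beta$ there.

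The core computation is to verify, on $\{v>0\}$, the pointwise inequality
\[
\partial_t v-\partial_{xx}(v^m)-f(v)\leq 0.
\]
Writing $v=w(1-Aw^\eta)$ and using $\partial_t w=\rho w^\beta$ gives
\[
\partial_t v=\rho w^\beta\bigl(1-A(1+\eta)w^\eta\bigr),
\]
while $v^\beta=w^\beta(1-Aw^\eta)^\beta\leq w^\beta-\beta A w^{\beta+\eta}+O(w^{\beta+2\eta})$ by a Taylor expansion; the $O(w^{\beta+2\eta})$ error is absorbed into the leading $(r-\rho)w^\beta$ term by taking $x_1$ larger if necessary, since $w$ is small on $\{v>0\}$. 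For the diffusion part I would use \eqref{calcullaplacien3} together with the chain rule applied to the smooth function $s\mapsto (s(1-As^\eta))^m$ to factor $\partial_{xx}(v^m)$ as $w^{m+\beta-1}$ times a quantity that is bounded pointwise by $m\,\varphi'(x)+m(2m+\beta+\eta-1)\varphi^2(x)$ plus higher-order corrections in $w$. Combining these expansions, the inequality to be verified reduces, modulo terms of order $w^{\beta+2\eta}$, to
\[
(\rho-r)w^\beta+A\bigl(r\beta-\rho(1+\eta)+m\varphi'(x)+m(2m+\beta+\eta-1)\varphi^2(x)\bigr)w^{\beta+\eta}\leq 0,
\]
which is precisely what the two parts of \eqref{loin1} together with the choice $\tfrac{r\beta}{1+\eta}<\rho<r$ were tailored to guarantee.

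Finally, the comparison step requires a little care since the PDE is degenerate and $v$ has a free boundary where it meets zero. Because $u>0$ strictly on $(0,+\infty)\times\mathbb{R}$ (recalled just after Assumption \ref{ass:f}), the equation is uniformly parabolic on a neighbourhood of the graph of $u$, so $v$ being a classical subsolution on its positivity set, vanishing smoothly enough at the free boundary (the factor $(1-Aw^\eta)$ is Lipschitz in $x$ near the free boundary, and $(v^m)_x$ remains bounded there), is enough to apply the comparison principle in the sense used in \cite{Vaz-book2} and deduce \eqref{comparison}. I expect the main obstacle to be the bookkeeping in the second step: carefully expanding $\partial_{xx}(v^m)$ and $v^\beta$ so as to exhibit exactly the quantities appearing in \eqref{loin1}, and verifying that all higher-order error terms in $w$ can be swallowed by the strictly negative leading coefficient $\rho-r$ once $x_1$ is chosen large enough.
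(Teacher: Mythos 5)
Your plan follows the paper's proof essentially line by line: same localisation of $\{v>0\}$ to $\{x\geq x_1\}$ via $A>1/\kappa^\eta$ and the monotonicity of $t\mapsto w(t,x)$, same reduction to $\mathcal L v\leq 0$ on the positivity set, same use of the product rule on $\partial_{xx}(v^m)$ together with \eqref{loin1}, and the same comparison argument exploiting $u>0$ for $t>0$. One small cleanup: for the reaction term you need a \emph{lower} bound on $f(v)\geq rv^\beta=rw^\beta(1-Aw^\eta)^\beta$, and since $s\mapsto(1-s)^\beta$ is convex for $\beta\geq 1$ the inequality $(1-Aw^\eta)^\beta\geq 1-A\beta w^\eta$ holds exactly, with no $O(w^{\beta+2\eta})$ remainder to absorb (your sketch also writes $\leq$ where $\geq$ is needed, and the $w^\beta$ bracket in the final display should carry the $m|\varphi'|$ contribution, which the first half of \eqref{loin1} is designed to dominate); the paper uses convexity directly for precisely this reason.
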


\begin{proof} Clearly $v(0,x)\leq u_0(x)$. Recalling that $u(t,x)>0$ for all $t>0$, $x\in \R$, it is thus enough to consider the points $(t,x)$ for which $v(t,x)>0$. We therefore need to show
$$
\mathcal L v(t,x):=\partial _t v(t,x)-\partial _{xx}(v^{m})(t,x)-f(v(t,x))\leq 0\quad \text{ when } v(t,x)=w(t,x)-Aw^{1+\eta}(t,x)>0,
$$
and the conclusion follows from the maximum principle on the corresponding subdomain.

Note that $v(t,x) >0$ implies in particular that $w(t,x)<1/A^{1/\eta}<\kappa$
so that $u_0(x)=w(0,x)\leq w(t,x)<\kappa$ since $t\mapsto w(t,x)$ is increasing. In view of the
 definition of $\kappa$ in~\eqref{def-kappa}, this enforces $x\geq x_1$. As a result estimates \eqref{loin1} are
available. On the other hand $v(t,x)\leq \max_{0\leq w\leq
A^{-1/\eta}}w-Aw^{1+\eta}=\frac{\eta}{1+\eta}\frac{1}{(A(1+\eta))^{1/\eta}}\leq
s_0$ by \eqref{def-A2}. Hence, it follows from~\eqref{nonlinearity-acc} that
\begin{eqnarray*}
f(v(t,x))&\geq& rv^{\beta}(t,x) \\
&=& rw^{\beta}(t,x)(1-Aw^{\eta}(t,x))^{\beta}.
\end{eqnarray*}
Then the convexity inequality $(1-Aw^\eta)^\beta\geq 1-A\beta w^\eta$ yields
\begin{equation}\label{ineg1}
f(v(t,x))\geq rw^{\beta}(t,x)-rA\beta w^{\beta+\eta}(t,x).
\end{equation}
Next, we compute
\begin{equation}
\label{ineg2}
\partial _t v(t,x)=\partial _t w(t,x)-A(1+\eta)\partial _tw(t,x)w^{\eta}(t,x)=\rho w^{\beta}(t,x)-A\rho(1+\eta)w^{\beta+\eta}(t,x)
\end{equation}
and, omitting variables,
\begin{eqnarray*}
\partial _{xx}(v^{m})&=&\partial _{xx}(w^{m})(1-Aw^{\eta})^{m}+2\partial _x(w^{m})\partial _x((1-Aw^{\eta})^{m})+w^{m}\partial_{xx}((1-Aw^{\eta})^{m})\nonumber \\
&=& \left(m\varphi'w^{m+\beta-1}+m(m+\beta-1)\varphi^{2}w^{m+2\beta-2}\right)(1-Aw^{\eta})^{m}\nonumber \\
&&+2m\varphi w^{m+\beta -1}\left(-Am\eta \varphi w^{\beta+\eta -1}(1-Aw^{\eta})^{m-1}\right)\nonumber \\
&&+w^{m}\Big ( -Am\eta \varphi 'w^{\beta+\eta-1}(1-Aw^{\eta})^{m-1}\\
&&\quad \quad \quad -Am\eta \varphi(\beta+\eta -1)w^{\beta +\eta -2}\varphi w^{\beta}(1-Aw^{\eta})^{m-1}\\
&&\quad \quad \quad -Am\eta \varphi w^{\beta +\eta -1}(m-1)(-A\eta w^{\eta -1}\varphi w^{\beta})(1-Aw^{\eta})^{m-2}
\Big)\\
&\geq & m\varphi'w^{m+\beta-1}(1-Aw^{\eta})^{m}\\
&&-2Am^{2}\eta \varphi ^{2} w^{m+2 \beta +\eta-2}(1-Aw^{\eta})^{m-1}\\
&& -Am\eta \varphi 'w^{m+\beta+\eta -1}(1-Aw^{\eta})^{m-1}\\
&& -Am\eta(\beta+\eta -1)\varphi ^2 w^{m+2\beta +\eta -2}(1-Aw^{\eta})^{m-1}
\end{eqnarray*}
where we have dropped some nonnegative terms (recall that $m>1$). Next, the crude estimate $(1-Aw^{\eta})^{m-1}\leq 1$ yields
\begin{eqnarray}
-\partial _{xx}(v^{m})
&\leq & m\vert \varphi' \vert w^{m+\beta-1}+2Am^{2}\eta \varphi ^{2} w^{m+2 \beta +\eta-2}+Am\eta \vert \varphi '\vert w^{m+\beta+\eta -1}\nonumber \\
&&+Am\eta(\beta+\eta -1)\varphi ^2 w^{m+2\beta +\eta -2}\nonumber \\
&\leq & m\vert \varphi' \vert w^\beta+2Am^{2}\eta \varphi ^{2} w^{\beta+\eta}+Am\eta \vert \varphi '\vert w^{\beta+\eta } \nonumber \\
&& +Am\eta(\beta+\eta -1)\varphi ^2 w^{\beta +\eta}.\label{ineg3}
\end{eqnarray}
Combining \eqref{ineg2}, \eqref{ineg3} and \eqref{ineg1}, we arrive at 
\begin{align*}
&\mathcal L v(t,x)\leq w^{\beta}(t,x)\left[\rho-r+m\vert \varphi'(x)\vert\right]\\
&\quad \quad \quad\quad + A (1 + \eta) w^{\beta+\eta}(t,x)\left[-\rho+\frac{r\beta}{1+\eta}+m\vert \varphi'(x)\vert+m(2m+\beta+\eta -1)\varphi^2(x)\ \right].
\end{align*}
Thanks to \eqref{loin1}, both bracket terms are nonpositive and we conclude that  $\mathcal{L} v(t,x) \leq 0$. Lemma \ref{lem:sub} is proved.
\end{proof}

The rest of the proof of the lower bounds in \eqref{levelset-kpp} and \eqref{levelset} is now identical to \cite{Alf-tails}.  

\medskip

\noindent {\bf Proof of the lower bound for small $\lambda$.} Equipped with the
 above subsolution, whose role is to \lq\lq lift'' the solution $u(t,x)$ on intervals that enlarge with
 acceleration,  we first prove the lower bound on the level sets $E_\lambda(t)$ when $\lambda$ is small.

Let us fix
$$
0<\theta< \min \{ C, 1/A^{1/\eta} \},
$$
where $C$ is as in \eqref{algebraic-acc2}. We  claim that, for any $t\geq 0$, there is a unique $y_\theta (t)\in \R$ such that
$w(t,y_\theta(t))=\theta$, and moreover $y_\theta(t)$ is given by either
\begin{equation}\label{def-ytheta-kpp}
y_\theta(t):=\left(\frac{C}{\theta}\right)^{1/\alpha}e^{\frac{\rho}{\alpha}t},
\end{equation}
when $\beta =1$, or 
\begin{equation}
\label{def-ytheta}
y_\theta(t):=\left( \left(\frac C\theta \right)^{\beta-1}+\rho C^{\beta -1}(\beta -1)t\right)^{\frac{1}{\alpha(\beta-1)}},
\end{equation}
when $\beta > 1$.

Indeed, since $\theta<1/A^{1/\eta}<\kappa=\inf _{x\in(-\infty,x_1)}u_0(x)$ and since $w(t,x)\geq w(0,x)=u_0(x)$,
for $w(t,y)=\theta$ to hold one needs $y\geq x_1$. But, when $y\geq x_1>x_0$, one can use formula \eqref{algebraic-acc2}
and then solve equation $w(t,y)=\theta$, thanks to expression \eqref{def-w-kpp} or \eqref{def-w}, to find the unique solution~\eqref{def-ytheta-kpp} or~\eqref{def-ytheta}.

Let us now define the open set
$$
\Omega :=\{(t,x) :\,  t>0, x<y_\theta (t)\}.
$$
Let us evaluate $u(t,x)$ on the boundary $\partial \Omega$.
For $t>0$, it follows from \eqref{comparison} that
$$
u(t,y_\theta(t))\geq w(t,y_\theta (t))-Aw^{1+\eta}(t,y_\theta (t))=
\theta -A\theta ^{1+\eta}>0.
$$
On the other hand, for $t=0$ and $x\leq y_\theta (0)=(\frac C \theta)^{1/\alpha}$, we have
$$
u(0,x)\geq \inf _{x\leq (\frac C \theta)^{1/\alpha}}u_0(x)>0,
$$
in view of Assumption \ref{ass:initial}. As a result $\Theta:=\inf _{(t,x)\in \partial\Omega}u(t,x)>0$. Since $\Theta >0$
is a subsolution for equation \eqref{eq}, it follows from the comparison principle that
\begin{equation}
\label{etoile}
u(t,x)\geq \Theta, \quad \forall t\geq 0, \forall x\leq y_\theta(t).
\end{equation}
This implies in particular that, for any $0<\lambda <\Theta$, we have, for all $t\geq t_\lambda$,
\begin{equation*}
\label{small-levelset} \emptyset \neq E_\lambda (t)\subset
(y_\theta(t),+\infty)\subset (x^{-}_\rho (t),+\infty),
\end{equation*}
where either 
\begin{equation*}
 x^{-}_\rho (t):= e^{\frac{\rho}{\alpha} t},
\end{equation*}	
or
\begin{equation*}
 x^{-}_\rho (t):=\left(\rho C^{\beta -1}(\beta -1)t\right)^{\frac{1}{\alpha(\beta-1)}},
 \end{equation*}	
depending on whether $\beta =1 $ or $\beta >1$. Since $\rho>r-\ep$ this implies the lower bounds in \eqref{levelset-kpp} and \eqref{levelset} for $0<\lambda<\Theta$. \qed

 \medskip

\noindent {\bf Proof of the lower bound for any $\lambda\in(0,1)$.}
 Let us now turn to the  case where $\lambda$ is larger than $\Theta$. Let
 $\Theta\leq \lambda <1$ be given. Let us denote by $v(t,x)$ the solution of \eqref{eq} with initial data
 \begin{equation}\label{initial-data-v}
v_0(x):= \begin{cases} \Theta &\text{
if } x\leq -1
\\
-\Theta x &\text{ if } -1<x<0\\
0 &\text{ if  }x\geq 0.
\end{cases}
\end{equation}
By using a subsolution based on Proposition \ref{prop:fin2} as in the proof of \eqref{invasion} in Section \ref{s:prelim} (or by a straightforward extension of \cite[Theorem 4.1]{Med-03} which dealt with Heaviside type initial data), we see that $
\lim _{t\to+\infty}\inf_{x\leq c _0  t} v(t,x)=1$,
for some $c _0  >0$. In particular there is a time $\tau_{\lambda,\ep}>0$ (this time depends on
$\Theta$ and therefore on $\ep$ from the above construction of the small bump subsolution) such that
\begin{equation}
\label{v-grand}
 v(\tau_{\lambda,\ep},x)>\lambda, \quad \forall x\leq 0.
 \end{equation}

 On the other hand, it follows from \eqref{etoile} and the definition \eqref{initial-data-v}
 that
 $$
 u(T,x)\geq v_0(x-y_\theta(T)),\quad \forall T\geq 0, \forall x\in \R,
 $$
 so that the comparison principle yields
 $$
 u(T+\tau,x)\geq v(\tau,x-y_\theta(T)),\quad \forall T\geq 0, \forall \tau \geq 0, \forall x\in \R.
 $$
 In view of \eqref{v-grand}, this implies that
 $$
 u(T+\tau_{\lambda,\ep},x)>\lambda,\quad \forall T\geq 0, \forall x\leq y_\theta (T).
 $$
 Hence, for any $t\geq T^{1}_{\lambda,\ep}:=\max (\tau_{\lambda,\ep},t_\lambda)$, if we pick
  a $x\in E_\lambda(t)$ then the above implies $x>y_\theta(t-\tau_{\lambda,\ep})$. 

In particular, if $\beta =1$ then
$$x > \left( \frac C\theta \right)^{1/\alpha} e^{\frac{\rho}{\alpha} (t - \tau_{\lambda,\ep}) } \geq e^{ \frac{r-\varepsilon}{\alpha} t}$$
for all $t \geq T_{\lambda,\ep}$, with $T_{\lambda,\ep}>0$ sufficiently large (recall that $\rho > r - \ep$). This concludes the proof of the lower bound in \eqref{levelset-kpp}.

On the other hand, if $\beta >1$ then
 $$
 x>\left( \left(\frac C\theta \right)^{\beta-1}-\rho C^{\beta-1}(\beta-1)\tau _{\lambda,\ep}+\rho C^{\beta -1}(\beta -1)t\right)^{\frac{1}{\alpha(\beta-1)}}\geq \left((r-\ep)C^{\beta-1}(\beta-1)t\right)^{\frac{1}{\alpha(\beta-1)}},
 $$
for all $t$ large enough, which also concludes the proof of  the lower bound in \eqref{levelset}. \qed

\subsection{Upper bound on the level sets in \eqref{levelset-kpp} and \eqref{levelset}}\label{ss:upper}

Let $\lambda \in(0,1)$ and  $\ep>0$ small be given.
Up to enlarging $x_0>1$ which appears in \eqref{algebraic-acc-kpp}, we can assume without loss of generality that
\begin{equation}
\label{xzerogrand}
 m \frac{\alpha(1-\alpha(\beta-1))}{\overline C ^{\beta-1}x_0^{2-\alpha(\beta-1)}}+m(m+\beta-1) \left(\frac{\alpha}{\overline C^{\beta-1}x_0^{1-\alpha(\beta-1)}}\right)^{2} \leq \frac \ep 2,
\end{equation}
and $\frac{\overline C}{x_0^{\alpha}}<1$. This is possible since $0<1-\alpha(\beta-1)<2-\alpha(\beta-1)$.

Now in view of \eqref{algebraic-acc-kpp} and the comparison principle, it is enough to prove the upper bound in \eqref{levelset-kpp} and in \eqref{levelset} when
\begin{equation}
\label{algebraic-acc3}
u_0(x)= \frac{\overline C}{x^\alpha},\quad \forall x\geq x_0.
\end{equation}

Let us select
$$
\rho:=\overline r+\frac \ep 2.
$$
We then define
$$
 \psi(t,x):=\min\left(1, w (t,x) \right),
$$
where $w(t,x)$ is defined by either \eqref{def-w-kpp} or \eqref{def-w} depending on whether $\beta=1$ or $\beta >1$. Since $\inf_{x \leq x_0} u_0 (x) > 0$, there exists $T>0$ large enough so that
$$\psi (t,x) = 1 , \quad \forall t \geq T, \forall x \leq x_0 .$$
We claim that $\psi$ is a (generalized) supersolution for equation \eqref{eq} in the domain $(T,+\infty)\times \mathbb{R}$.
As in Section~\ref{s:no-acceleration}, since~$1$ solves \eqref{eq}, it suffices to consider the points $(t,x)$ where $\psi(t,x)=w(t,x)<1$. From our choice of $T$, this implies that $x > x_0$. In view of
$$
\partial _t w(t,x)=\rho w ^\beta (t,x)=(\overline r+\frac \ep 2)w ^\beta(t,x),
$$
together with \eqref{calcullaplacien3} and inequality \eqref{nonlinearity-accbis}, some straightforward computations yield
\begin{align}
 &\partial _t w(t,x)-\partial _{xx}(w^{m})(t,x)-f(w(t,x))\nonumber\\
&\quad \quad \quad\quad\geq \frac \ep 2 w ^\beta(t,x)-m\varphi'(x)w^{m+\beta-1} (t,x)-m(m+\beta-1) \varphi^2(x) w ^{m+2\beta -2}(t,x)\nonumber\\
&\quad \quad \quad\quad\geq w ^\beta(t,x)\left( \frac{\ep}{2}-m\vert \varphi'(x)\vert -m(m+\beta -1) \varphi^2(x) \right)\label{qqch}
\end{align}
since $0<w(t,x)<1$, $m>1$, and where $\varphi(x)=\frac{u_0'(x)}{u_0^{\beta }(x)}$  already appeared in \eqref{varphi}.

In view of expression \eqref{algebraic-acc3}, some straightforward
computations yield that, for any $x\geq x_0$,
\begin{align*}
 &m\vert \varphi'(x)\vert +m(m+\beta-1) \varphi^2(x) \\
&\quad \quad \quad\quad=m\frac{\alpha(1-\alpha(\beta-1))}{\overline C ^{\beta-1}x^{2-\alpha(\beta-1)}}+m(m+\beta-1) \left(\frac{\alpha}{\overline C^{\beta-1}x^{1-\alpha(\beta-1)}}\right)^{2}
\\
&\quad \quad \quad\quad\leq m \frac{\alpha(1-\alpha(\beta-1))}{\overline C ^{\beta-1}x_0^{2-\alpha(\beta-1)}}+m(m+\beta-1) \left(\frac{\alpha}{\overline C^{\beta-1}x_0^{1-\alpha(\beta-1)}}\right)^{2}
\end{align*}
since $0<1-\alpha(\beta-1)<2-\alpha(\beta-1)$. Now by \eqref{xzerogrand}, we get
$$
m\vert \varphi'(x)\vert +m(m+\beta -1)  \varphi^2(x) \leq \frac \ep 2.
$$
It therefore follows from \eqref{qqch}  that, for any $(t,x)\in (T, +\infty) \times \mathbb{R}$ such that $w(t,x)<1$,
$$
\partial _t w(t,x)-\partial _{xx} (w^m) (t,x)-f(w(t,x))
\geq 0,
$$
which proves our claim that $\psi$ is a supersolution of \eqref{eq} in $(T,+\infty) \times \mathbb{R}$.

Finally, since $w$ is increasing in time and $w(0,\cdot) \equiv u_0 (\cdot)$, we have that 
$$u_0 (\cdot) \leq w (T, \cdot),$$
and by assumption $u_0 \leq 1$. Therefore we can apply the comparison principle and conclude that
\begin{equation}\label{comparison2}
u(t,x)\leq\psi(t+T,x)\leq w(t+T,x),\quad \forall (t,x)\in [0,+\infty)\times  \mathbb{R}.
\end{equation}
For $t\geq t_\lambda$, let us pick a $x\in E_\lambda (t)$. It follows from~\eqref{comparison2} that
$w(t+T,x)\geq \lambda$ which, using the expression for $w$ transfers into either
$$
x\leq  \left( \frac{\overline{C}}{\lambda} \right)^{1/\alpha} e^{\frac{\overline r + \frac \ep 2}{\alpha} (t+T)}
< e^{\frac{\overline r + \ep }{\alpha} t} =:x^+(t),
$$
when $\beta = 1$, or
\begin{align*}
x\leq& \left( \left(\frac{\overline C}{\lambda}\right)^{\beta-1}+ \left(\overline r +\frac
\ep 2\right)\overline C^{\beta -1}(\beta -1)(t+T)\right)^{\frac{1}{\alpha(\beta-1)}}\\
<& \left((\overline r +
\ep )\overline C^{\beta -1}(\beta -1)t\right)^{\frac{1}{\alpha(\beta-1)}}=:x^+(t),
\end{align*}
when $\beta >1$, for $t\geq  T_{\lambda,\ep}$ chosen sufficiently large. This concludes the proof of  the upper bound in \eqref{levelset-kpp} (when $m>1$) and in \eqref{levelset}. \qed

\section{Acceleration regime for fast diffusion}\label{s:FDE-kpp}

In this section, we end the proof of Theorem~\ref{th:FKPP_porous_FDE} by now considering the case $0 < m<1$, and also  prove Theorem \ref{th:acc-FDE}. Throughout this section, we will thus take $0<m<1$, $\alpha >0$, $\beta \geq 1$ (further assumptions will come below), and assume that \eqref{algebraic-acc-kpp}, \eqref{nonlinearity-acc-FDE} and \eqref{nonlinearity-accbis-FDE} hold.

\subsection{Heavier tail by fast diffusion}\label{s:heavy}

We start by some short preliminary on the size of the tail of the solution in the fast diffusion regime, which partly explains qualitative differences with the linear diffusion and porous medium diffusion cases.

If $\alpha < \frac 2{1-m}$ then we are satisfied with the  lower estimate of \eqref{algebraic-acc-kpp}, namely
$u_0(x)\geq \frac{C}{x^\alpha}$ for~$x\geq x_0>1$.

On the other hand, if $\alpha \geq \frac{2}{1-m}$ then the fast diffusion equation immediately makes the tail heavier (w.r.t. to both the exponent and the multiplicative constant) at positive time. More precisely, by comparison, we have $u(t,x)\geq v(t,x)$ where $v(t,x)$ solves the fast diffusion equation
$$
\partial _t v=\partial _{xx} (v^m), \quad t>0,\, x\in \R.
$$
A lower bound for the long time behavior of $v$ is provided by  \cite[Theorem 2.4]{Her-Pie-85}: for a given~$T>0$, there is $C(T)>0$ and $x(T) >0$ such that 
$$v(T,x)\geq \frac{C(T)}{x^{\frac{2}{1-m}}}$$ for $x \geq x(T)$. Moreover $C (T)\to+\infty$ as $T\to +\infty$. 

As a result, up to shifting time by $T$ large enough and by a comparison argument, it is enough (as far as the lower bounds of the level sets are concerned) to consider the case of a smooth decreasing  data such that
\begin{equation}
\label{algebraic-gamma}
u_0(x) = \frac{C}{x^{\gamma}}, \quad \forall x\geq x_0, \quad \gamma:=\min\left(\alpha,\frac{2}{1-m}\right),
\end{equation}
for some $x_0>1$ arbitrarily large. Also, as explained above, if $\alpha \geq \frac{2}{1-m}$ then we can  enlarge the constant $C$ without loss of generality.

\subsection{Lower bound on the level sets in \eqref{levelset-kpp} and \eqref{levelset-FDE}}\label{ss:lower-kpp}

In this subsection, we will mostly assume that
$$1 \leq \beta < \min \left( 1+ \frac{1}{\gamma},  m + \frac{2}{\gamma} \right).$$
Comparing this assumption to those of Theorem \ref{th:FKPP_porous_FDE} and Theorem \ref{th:acc-FDE}, this precludes the case~$\beta = 1$ and $\gamma = \frac{2}{1-m}$ which we consider separately at the end of this subsection. We will see though that the difference is only very minor: when $\beta =1 $ and $\gamma = \frac{2}{1-m}$ (or equivalently $\alpha\geq \frac{2}{1-m}$), it is necessary to enlarge the constant $C$ in \eqref{algebraic-acc-kpp}, which is possible in this case as seen in subsection \ref{s:heavy}.\medskip

\noindent{\bf An accelerating subsolution.} Let $\ep >0$ small be given, and
$$\eta  > \beta - 1 \geq 0.$$
For convenience and to make some of our computations simpler, we will also add the condition
\begin{equation}\label{eq:pratique}
\frac{\eta}{1+\eta} > \frac{1}{2}.
\end{equation}
Then let $\rho >0$ be such that
\begin{equation}
\label{def-rho-kppbis}
\max\left(\frac{r \beta }{1+\eta},r-\ep\right) <\rho <r.
\end{equation}
We again define $w(t,x)$ as the solution of
$$\partial_t w (t,x) = \rho w^\beta (t,x), \quad w(0,x)=u_0 (x),$$
which is given by either \eqref{def-w-kpp} if $\beta =1$, or \eqref{def-w} if $\beta >1$.
In this fast diffusion regime ($0<m<1$) a one side compactly supported subsolution $\max(0,w(t,x)-Aw^{1+\eta}(t,x))$ in the spirit of \cite{Ham-Roq-10} or of Section \ref{s:PME} would not work because of terms like $(1-Aw^\eta(t,x))^{m-1}$, $(1-Aw^\eta(t,x))^{m-2}$ which become infinite as $w(t,x)$ approaches $(1/A)^{1/\eta}$. The idea is to cut by a small constant \lq\lq on the left'' rather than by zero. Let us make this precise.

First, we increase $x_0$ without loss of generality so that the three following inequalities hold:
\begin{equation}\label{eq:x0-acc-fde}
 m \frac{\gamma C^{m-\beta} (\gamma + 1 - \gamma \beta)}{x_0^{2+(m-\beta) \gamma}} \leq r - \rho , 
\end{equation}
\begin{equation}\label{eq:x0-acc-fde2}
 2^{1-m}  m \eta \times \frac{\gamma C^{m-\beta}}{x_0^{2 + (m-\beta)\gamma}} \left( 2 m \gamma +  1 + \gamma \eta     +  2 (1-m)   \frac{\eta}{1+\eta}\gamma\right)  \leq  \rho (1 +\eta) -  r \beta  ,
\end{equation}
\begin{equation}\label{eq:x0-acc-fde3}
2^{1-m}  m \eta \left[ \frac{\gamma C^{m-\beta}}{x_0^{2 + (m-\beta)\gamma}} \left(  \gamma +  1 - \gamma \beta   \right)  \\
+ \theta \frac{\gamma^2 C^{2- 2\beta}}{x_0^{2 + 2\gamma (1 -\beta)}}  \right]   \leq \rho (1+\eta) - r \beta.
\end{equation}
where $\theta:= 2m + \beta + \eta -1+(1-m)\frac{2\eta}{1+\eta}$.  This is possible thanks to \eqref{def-rho-kppbis}, using also the fact that $2 + (m-\beta) \gamma >0$ and $1 + \gamma (1- \beta) >0$. We will see below that the rather tedious left-hand terms of these inequalities arise when computing $\partial_{xx} (v^m)$, with $v = w (1-Aw^\eta)$ our accelerating subsolution.
 
Now Assumption \ref{ass:initial} implies that
\begin{equation*}
\label{def-kappa-kpp}
\kappa:=\inf _{x\in(-\infty,x_0)} u_0(x)\in(0,1] .
\end{equation*}
Next, we select $A>1$ large enough so that
\begin{equation*}
\label{def-A-kpp}
A>\frac{1}{\kappa^\eta (1 +\eta) } 
\end{equation*}
and
\begin{equation*}
\label{def-A2-kpp}
\left(\frac{1}{A(1+\eta)} \right)^{1/\eta} \frac{\eta}{1+\eta} \leq s_0,
\end{equation*}
where $s_0$ is as in \eqref{nonlinearity-acc}. Now we define $X(t) \in \mathbb{R}$ such that 
$$w(t,X(t)) = \left( \frac{1}{A(1+\eta)}\right)^{1/\eta}.$$ Since $w^\eta (t,x) \geq u_0^\eta (x) \geq \kappa^\eta > \frac{1}{A(1+\eta)}$ for all $x \leq x_0$, we have that $X(t) > x_0$. Using the explicit expressions for $w(t,\cdot)$ and $u_0$ on $(x_0,+\infty)$ provided by \eqref{algebraic-gamma}, it is then straightforward that such an $X(t)$ is uniquely defined. Moreover, $w(t,x) <( \frac{1}{A(1+\eta)})^{1/\eta}$ if and only if $x > X(t)$.

\begin{lem}[An accelerating subsolution]
\label{lem:sub-kpp} Let the assumptions of either Theorem~\ref{th:FKPP_porous_FDE} (with $0<m<1$) or Theorem \ref{th:acc-FDE} hold. Further assume that \eqref{algebraic-gamma} holds.

Define
\begin{equation*}
\label{def-sub-kpp}
v(t,x):=
\begin{cases}
\left(\frac{1}{A(1+\eta)} \right)^{1/\eta} \frac{\eta}{1+\eta} &\mbox{ if } x\leq X(t)\\
w(t,x)(1-Aw^\eta (t,x)) &\mbox{ if } x>X(t).
\end{cases}
\end{equation*}
Then $v(t,x)\leq u(t,x)$ for all $t>0$, $x\in \R$.
\end{lem}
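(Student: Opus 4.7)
The plan is to verify that $v$ is a generalized subsolution of \eqref{eq} lying below $u$ at $t=0$, and then to invoke the comparison principle (available since $u>0$ on $(0,+\infty)\times\R$, so the singularity of the fast diffusion at $u=0$ raises no issue). I would split the argument into the initial ordering, the two open regions $\{x<X(t)\}$ and $\{x>X(t)\}$, and the interface $\{x=X(t)\}$.

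The initial ordering is immediate. On $x\leq X(0)$, the constant value $v_\star:=(A(1+\eta))^{-1/\eta}\eta/(1+\eta)$ satisfies $v_\star\leq\kappa\leq u_0(x)$ by the choice of $A$, and on $x>X(0)$ we have $v(0,x)=u_0(x)(1-Au_0^\eta(x))\leq u_0(x)$. On the left region $\{x<X(t)\}$, $v\equiv v_\star\in(0,s_0)\subset(0,1)$ is constant, so $\partial_t v=\partial_{xx}(v^m)=0$ and $f(v_\star)>0$, hence $\mathcal{L}v:=\partial_t v-\partial_{xx}(v^m)-f(v)<0$ there.

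The core of the work lies in the right region $\{x>X(t)\}$, where $v=w-Aw^{1+\eta}$ with $w<(A(1+\eta))^{-1/\eta}$. The crucial observation is that $Aw^\eta<1/(1+\eta)$, so by \eqref{eq:pratique}, $1-Aw^\eta>\eta/(1+\eta)>1/2$, which yields the key bounds
$$
(1-Aw^\eta)^{m-1}\leq 2^{1-m},\qquad (1-Aw^\eta)^{m-2}\leq 2^{2-m}.
$$
This is precisely why the cutoff is placed at the maximizer of $w\mapsto w-Aw^{1+\eta}$ rather than at zero: in the fast diffusion regime ($m<1$), a cutoff at zero would make these factors blow up. Next I would compute $\partial_t v=\rho w^\beta(1-A(1+\eta)w^\eta)\leq\rho w^\beta$ and expand $\partial_{xx}(v^m)=\partial_{xx}(w^m(1-Aw^\eta)^m)$ using $\partial_x w=\varphi(x)w^\beta$ with $\varphi(x)=u_0'(x)/u_0^\beta(x)$; the result is a sum of terms in $|\varphi|$, $\varphi^2$, $|\varphi'|$ multiplied by appropriate powers of $w$ and by $(1-Aw^\eta)^j$ with $j\in\{m-2,m-1,m\}$, all controlled by the previous display. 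The convexity inequality $(1-Aw^\eta)^\beta\geq 1-A\beta w^\eta$ (valid since $\beta\geq 1$) combined with $v\leq v_\star\leq s_0$ and \eqref{nonlinearity-acc-FDE} gives $f(v)\geq rw^\beta-rA\beta w^{\beta+\eta}$. Collecting terms, $\mathcal{L}v$ appears as a linear combination of $w^\beta$ and $w^{\beta+\eta}$ whose coefficients involve $\rho-r$, $|\varphi|$, $\varphi^2$, $|\varphi'|$ and powers of $u_0$. Since $w\geq u_0$ on $\{x>X(t)\}\subset\{x>x_0\}$ and $m<1$, one has $w^{m-1}\leq u_0^{m-1}=C^{m-1}x^{\gamma(1-m)}$, and the explicit algebraic form \eqref{algebraic-gamma} of $u_0$ makes all these coefficients decreasing in $x$. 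The thresholds \eqref{eq:x0-acc-fde}, \eqref{eq:x0-acc-fde2}, \eqref{eq:x0-acc-fde3} are chosen precisely so that both the $w^\beta$ and the $w^{\beta+\eta}$ coefficients are nonpositive on $\{x>x_0\}$, yielding $\mathcal{L}v\leq 0$.

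At the interface $x=X(t)$, direct computation gives $\partial_x v=\partial_x w\cdot(1-A(1+\eta)w^\eta)$ and $\partial_x(v^m)=mw^{m-1}\partial_x w(1-Aw^\eta)^{m-1}[1-A(1+\eta)w^\eta]$, both of which vanish where $A(1+\eta)w^\eta=1$; hence $v$ and $v^m$ are $C^1$ across the interface. Combined with the classical subsolution inequalities on each side, this shows that $v$ is a generalized subsolution on $(0,+\infty)\times\R$, so the comparison principle yields $v\leq u$. The main obstacle I anticipate is the expansion of $\partial_{xx}(v^m)$ and the careful bookkeeping required to match each resulting term against \eqref{eq:x0-acc-fde} through \eqref{eq:x0-acc-fde3}; the whole design of the subsolution (nonzero cutoff at the maximizer of $w-Aw^{1+\eta}$, auxiliary condition \eqref{eq:pratique}) is tailored to make this bookkeeping go through in the fast diffusion regime.
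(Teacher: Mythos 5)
Your proposal follows the same route as the paper: initial ordering, constant subsolution on the left half-domain, the explicit expansion of $\partial_{xx}(v^m)$ on the right half-domain with the factors $(1-Aw^\eta)^{m-1}, (1-Aw^\eta)^{m-2}$ uniformly bounded by $2^{1-m}, 2^{2-m}$ via \eqref{eq:pratique}, the convexity inequality for $f$, and the $C^1$-matching at the free boundary $x=X(t)$ where $A(1+\eta)w^\eta=1$. The paper makes the comparison-across-interface argument explicit via a first-touch plus Hopf lemma contradiction, whereas you simply invoke ``generalized subsolution $\Rightarrow$ comparison''; that gap is cosmetic.

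There is, however, one step you gloss over that does matter for the bookkeeping. After expanding $\partial_{xx}(v^m)$, the terms coming with $\varphi^2$ carry a factor $w^{m+\beta-2}$ (relative to $w^\beta$ or $w^{\beta+\eta}$), not merely $w^{m-1}$. Your bound $w^{m-1}\leq u_0^{m-1}$ handles the $\varphi'$-terms, but for the $\varphi^2$-terms you need to control $w^{m+\beta-2}$, whose exponent changes sign precisely at $\beta=2-m$. When $\beta\leq 2-m$ one uses $w^{m+\beta-2}\leq u_0^{m+\beta-2}$ and pairs $\varphi^2 u_0^{m+\beta-2}$, which decays since $\beta<m+\frac{2}{\gamma}$; this is what \eqref{eq:x0-acc-fde2} controls. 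When $\beta>2-m$ the same bound would go the wrong way, so instead one uses $w^{m+\beta-2}\leq 1$ and controls $\varphi^2$ alone, which decays since $\beta<1+\frac{1}{\gamma}$; this is what \eqref{eq:x0-acc-fde3} controls. Your sketch asserts the coefficients ``are decreasing in $x$'' without noticing this dichotomy, and so does not explain why both \eqref{eq:x0-acc-fde2} and \eqref{eq:x0-acc-fde3} are needed. Spelling out this case split $\beta\lessgtr 2-m$ is what completes the argument.
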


\begin{proof} Clearly $v(0,x) < u_0(x)$. Let us already note that $v$ is smooth in both subdomains $\{ x < X(t)\}$ and $\{ x > X(t) \}$. Also, it is continuous in $[0,+\infty) \times \mathbb{R}$ as well as $C^1$ with respect to $x$ at the junction point $X(t)$. This means that a comparison principle is applicable provided that $v$ satisfies 
\begin{equation}\label{plus1}
\mathcal L v(t,x):=\partial _t v(t,x)-\partial _{xx}(v^m)(t,x)-f(v(t,x))\leq 0
\end{equation}
in both these subdomains. Indeed, if $v$ is a positive subsolution on the half-domain $\{ (t,x) \in (0,+\infty) \times (-\infty, X(t))\}$, then either $v(t,x) < u(t,x)$ for all $t >0$ and $x \leq X(t)$, or there exists a first time $T>0$ such that $u(T,X(T))=v(T,X(T)) >0 $. In the latter case, by Hopf lemma we have $\partial_x u (T,X(T)) < \partial_x v(T,X(T))$, which due to the $C^1$-regularity of $u$ and $v$ in the $x$-variable contradicts the comparison principle on the right half-domain $\{ (t,x) \in (0,T) \times (X(t) , +\infty)\}$. Therefore, the inequality $v(t,x) \leq u(t,x)$ holds for all $t \geq 0$ and $x \leq X(t)$, and by the comparison principle it also holds for all $t\geq 0$ and $x > X(t)$.

Let us now check that $v$ satisfies~\eqref{plus1} on both subdomains. Since $\left(\frac{1}{A(1+\eta)} \right)^{1/\eta} \frac{\eta}{1+\eta}$ is obviously a subsolution to \eqref{eq}, we only need to check this inequality when $x > X(t)$. Recall that $X(t) > x_0$, hence \eqref{algebraic-gamma} is available. On the other hand it is straightforward that $v(t,x)\leq \max_{w \geq 0} w (1- Aw^\eta) = \left(\frac{1}{A(1+\eta)} \right)^{1/\eta} \frac{\eta}{1+\eta}\leq s_0$. It then follows from \eqref{nonlinearity-acc} and a  convexity inequality that
\begin{equation}
\label{un}
f(v(t,x))\geq rw^\beta (t,x)-rA \beta w^{\beta + \eta }(t,x).
\end{equation}
Next we have
\begin{equation}
\label{deux}
\partial _t v(t,x)=\rho w^\beta (t,x)-A\rho (1+\eta) w^{\beta + \eta }(t,x).
\end{equation}
Also, by the same computations as in Section \ref{s:PME},
\begin{eqnarray*}
\partial _{xx}(v^{m})&=&\partial _{xx}(w^{m})(1-Aw^{\eta})^{m}+2\partial _x(w^{m})\partial _x((1-Aw^{\eta})^{m})+w^{m}\partial_{xx}((1-Aw^{\eta})^{m})\nonumber \\
&\geq & m\varphi'w^{m+\beta-1}(1-Aw^{\eta})^{m} -2Am^{2}\eta \varphi ^{2} w^{m+2 \beta +\eta-2}(1-Aw^{\eta})^{m-1}\\
&& -Am\eta \varphi 'w^{m+\beta+\eta -1}(1-Aw^{\eta})^{m-1}\\
&& -Am\eta(\beta+\eta -1)\varphi ^2 w^{m+2\beta +\eta -2}(1-Aw^{\eta})^{m-1}\\
& & - (1-m) A^2 m \eta^2 \varphi^2 w^{m+2\beta + 2\eta -2}  (1-Aw^\eta)^{m-2},
\end{eqnarray*}
where $\varphi = \frac{u_0 '}{u_0^\beta}$. Comparing with our argument in Section~\ref{s:PME}, we have here an additional term due to the fact that $m<1$ now. Using $0<m<1$ and $1 \geq 1 - A w^\eta (t,x) \geq \frac{\eta}{1+\eta} \geq \frac{1}{2}$ (recall~\eqref{eq:pratique}), we then obtain that
\begin{eqnarray}
- \partial _{xx}(v^{m})& \leq &  m |\varphi' |w^{m+\beta-1} +2^{2-m} Am^{2}\eta \varphi ^{2} w^{m+2 \beta +\eta-2} \nonumber \\
&& + 2^{1-m} Am\eta |\varphi ' | w^{m+\beta+\eta -1}  +  2^{1-m} Am\eta(\beta+\eta -1)\varphi ^2 w^{m+2\beta +\eta -2} \nonumber \\
& & +  2^{2-m}(1-m)  A^2 m \eta^2 \varphi^2  w^{m+2\beta + 2\eta -2}\nonumber\\
& = &  w^{\beta}m |\varphi' |  (\frac{w}{u_0})^{m-1} u_0^{m -1} + w^{\beta + \eta} \times 2^{2-m} Am^{2}\eta \varphi ^{2}  (\frac{w}{u_0})^{m+ \beta -2} u_0^{m+\beta - 2}\nonumber\\
&& +  w^{\beta + \eta} \times 2^{1-m} Am\eta |\varphi ' |(\frac{w}{u_0})^{m -1}u_0^{m-1}\nonumber\\
&&  +  w^{\beta +\eta} \times 2^{1-m} Am\eta(\beta+\eta -1)\varphi ^2  (\frac{w}{u_0})^{m+\beta  -2} u_0^{m+ \beta - 2}\nonumber\\
& & +w^{\beta+\eta} \times 2^{2-m}(1-m)   A m \eta^2 (Aw^{\eta})\varphi^2 (\frac{w}{u_0})^{m+\beta-2}u_0^{m+\beta-2}\label{eq:acc-sub-test}
\end{eqnarray}
as well as
\begin{eqnarray}
- \partial_{xx} (v^m) & \leq &  w^{\beta}m |\varphi' |  (\frac{w}{u_0})^{m-1} u_0^{m -1} + w^{\beta + \eta} \times 2^{2-m} Am^{2}\eta \varphi ^{2}  w^{m+ \beta -2} \nonumber\\
&& +  w^{\beta + \eta} \times 2^{1-m} Am\eta |\varphi ' |(\frac{w}{u_0})^{m -1}u_0^{m-1} \nonumber\\
&& +  w^{\beta +\eta} \times 2^{1-m} Am\eta(\beta+\eta -1)\varphi ^2  w^{m+\beta  -2} \nonumber\\
& & +w^{\beta+\eta} \times 2^{2-m}(1-m)   A m \eta^2 (Aw^{\eta})\varphi^2 w^{m+\beta-2}.\label{eq:acc-sub-test-bis}
\end{eqnarray}
We distinguish below the two cases $\beta \leq 2 -m$ and $\beta > 2-m$. This is rather natural since~$2-m$ is the value which both hyperbolae $1 + \frac{1}{\gamma}$ and $m + \frac{2}{\gamma}$ take at their intersection point $\gamma = \frac{1}{1-m}$.\smallskip

$\bullet$ Let us first consider the case $\beta \leq 2 -m$ for which we will take advantage of \eqref{eq:acc-sub-test}. Since~$m<1$ and $w (t,\cdot) \geq u_0 (\cdot)$ for any $t \geq 0$, we have that
\begin{equation}\label{eq:wu0-sub}
(\frac{w}{u_0})^{m-1} , (\frac{w}{u_0})^{m+\beta - 2} \leq 1.
\end{equation}
Recall that $x \geq x_0$ and $u_0 (x) = \frac{C}{x^\gamma}$. Thus a straightforward computation leads to
\begin{equation}\label{eq:phiFDE1-sub}
\varphi^2 (x) u_0^{m+\beta -2} (x) = \frac{\gamma^2 C^{m - \beta}}{x^{2 + (m-\beta) \gamma}} \leq \frac{\gamma^2 C^{m - \beta}}{x_0^{2 + (m-\beta) \gamma}}.
\end{equation}
Here we used the fact that $\beta < m + \frac{2}{\gamma}$, so that $2 + (m-\beta) \gamma> 0$. Similarly, we also get
\begin{equation}\label{eq:phiFDE2-sub}
\varphi' (x) u_0^{m-1}(x) = \frac{\gamma C^{m-\beta} (\gamma + 1 - \gamma \beta)}{x^{2+ (m-\beta) \gamma}} \leq  \frac{\gamma C^{m-\beta} (\gamma + 1 - \gamma \beta)}{x_0 ^{2+ (m-\beta) \gamma}}.\end{equation} 
Finally, we also have
\begin{equation}\label{eq:encore}
(A w^{\eta}(t,x))\varphi^2 (x) u_0^{m+\beta  - 2} (x) \leq \frac{1}{1+\eta} \varphi^2 (x) u_0^{m+\beta  - 2} (x)\leq \frac{1}{1+\eta}\frac{\gamma^2 C^{m - \beta}}{x_0^{2 + (m-\beta) \gamma}},
\end{equation}
since  $w (t,x) \leq ( \frac{1}{A(1+\eta)})^{1/\eta} \leq 1$ for $t >0$ and $x \geq X(t)$ and from \eqref{eq:phiFDE1-sub}. Plugging  inequalities \eqref{eq:wu0-sub}, \eqref{eq:phiFDE1-sub}, \eqref{eq:phiFDE2-sub} and \eqref{eq:encore} into \eqref{eq:acc-sub-test}, we get
\begin{eqnarray*}
- \partial _{xx}(v^{m})
& \leq &  w^\beta \left[ m \frac{\gamma C^{m-\beta} (\gamma + 1 - \gamma \beta)}{x_0^{2+(m-\beta) \gamma}} \right] \\
&&  + w^{\beta + \eta} \left[2^{2-m} Am^{2}\eta \frac{\gamma^2 C^{m-\beta}}{x_0^{2+(m - \beta)\gamma}} + 2^{1-m} Am\eta \frac{\gamma C^{m-\beta} (\gamma + 1 - \gamma \beta)}{x_0^{2+(m-\beta) \gamma}} \right.\\
&& \left. +  2^{1-m} Am\eta(\beta+\eta -1)      \frac{\gamma^2 C^{m-\beta}}{x_0^{2+(m - \beta)\gamma}} +   2^{2-m}(1-m) A m \eta^2 \frac{1}{1+\eta}\frac{\gamma^2 C^{m - \beta}}{x_0^{2 + (m-\beta) \gamma}}  \right] . \\
& \leq &  w^\beta \left[ m \frac{\gamma C^{m-\beta} (\gamma + 1 - \gamma \beta)}{x_0^{2+(m-\beta) \gamma}} \right] \\
&&  + w^{\beta + \eta}  \times  2^{1-m}A   m \eta \left[ \frac{\gamma C^{m-\beta}}{x_0^{2 + (m-\beta)\gamma}} \left( 2 m \gamma +  1 + \gamma \eta     +  2 (1-m)   \frac{\eta}{1+\eta}\gamma\right)  \right] . \\
\end{eqnarray*}
By \eqref{eq:x0-acc-fde} and \eqref{eq:x0-acc-fde2}, we conclude that
\begin{equation}\label{truc}
-\partial_{xx} (v^m) \leq (r-\rho) w^\beta  +  A(\rho (1+\eta) -  r \beta) w^{\beta + \eta}.
\end{equation}
Together with \eqref{un}, \eqref{deux}, it now  follows that
\begin{equation}\label{truc2}
\partial _t v(t,x)-\partial _{xx}(v^{m})(t,x)-f(v(t,x))\leq 0,
\end{equation}
for all $t \geq 0$ and $x > X(t)$. This proves Lemma~\ref{lem:sub-kpp} when $\beta \leq 2-m$. 

$\bullet$  Let us now turn to the case $\beta > 2-m$ for which we will take advantage of \eqref{eq:acc-sub-test-bis}. Since~$m<1$ and $1> w (t,\cdot) \geq u_0 (\cdot)$ for any $t \geq 0$, we have that
\begin{equation}\label{eq:wu0-sub-bis}
(\frac{w}{u_0})^{m-1} , w^{m+\beta - 2} \leq 1.
\end{equation}
Notice that \eqref{eq:phiFDE2-sub} still holds. In this regime rather than \eqref{eq:phiFDE1-sub} and \eqref{eq:encore} we use 
\begin{equation}\label{eq:phiFDE1-subbis}
\varphi^2 (x)  \leq \frac{\gamma^2 C^{2-2\beta}}{x_0^{2+2\gamma (1-\beta)}}.
\end{equation}
Plugging \eqref{eq:wu0-sub-bis}, \eqref{eq:phiFDE2-sub} and \eqref{eq:phiFDE1-subbis} into \eqref{eq:acc-sub-test-bis}, we get
\begin{eqnarray*}
- \partial_{xx} (v^{m}) & \leq & w^\beta \left[ m \frac{\gamma C^{m-\beta} (\gamma + 1 - \gamma \beta)}{x_0^{2+(m-\beta) \gamma}} \right] \\
&&  + w^{\beta + \eta} \left[2^{2-m} Am^{2}\eta \frac{\gamma^2 C^{2-2\beta}}{x_0^{2+ 2 \gamma (1 - \beta)}} + 2^{1-m} Am\eta \frac{\gamma C^{m-\beta} (\gamma + 1 - \gamma \beta)}{x_0^{2+(m-\beta) \gamma}} \right.\\
&& \left. +  2^{1-m} Am\eta(\beta+\eta -1)      \frac{\gamma^2 C^{2-2\beta}}{x_0^{2+2 \gamma (1 - \beta)}} + 2^{2-m}(1-m)  A m \eta^2 \frac{1}{1+\eta} \frac{\gamma^2 C^{2-2\beta}}{x_0^{2 + 2\gamma (1 -\beta)}}  \right] \\
& = &  w^\beta \left[ m \frac{\gamma C^{m-\beta} (\gamma + 1 - \gamma \beta)}{x_0^{2+(m-\beta) \gamma}} \right] \\
&&  + w^{\beta + \eta}  \times  2^{1-m}  A m \eta \left[ \frac{\gamma C^{m-\beta}}{x_0^{2 + (m-\beta)\gamma}} \left(  \gamma +  1 - \gamma \beta   \right)  +  \theta \frac{\gamma^2 C^{2- 2\beta}}{x_0^{2 + 2\gamma (1 -\beta)}}  \right], 
\end{eqnarray*}
where $\theta= 2m + \beta + \eta -1+(1-m)\frac{2\eta}{1+\eta}$. By \eqref{eq:x0-acc-fde} and \eqref{eq:x0-acc-fde3}, we see that \eqref{truc} still holds
and again obtain \eqref{truc2}. This
 concludes the proof of Lemma~\ref{lem:sub-kpp}. \end{proof}

\medskip

\noindent{\bf The case $\beta=1$ and $\gamma = \frac{2}{1-m}$.} The only difference is that it is not sufficient to enlarge $x_0$ in order to obtain \eqref{eq:x0-acc-fde} and \eqref{eq:x0-acc-fde2}, since the $x_0$-exponent $2 +(m-\beta) \gamma$ is now null. However, as it has been explained in subsection~\ref{s:heavy}, in this case and up to a finite shift in time, we can assume without loss of generality that the constant $C$ is arbitrarily large. Then, $C^{m-\beta}$ can be made arbitrarily small so that \eqref{eq:x0-acc-fde} and \eqref{eq:x0-acc-fde2} again hold. The rest of the proof remains unchanged.

\medskip

\noindent {\bf Proof of the lower bound on the level sets.} Now that we have an explicit accelerating subsolution, the proof is the same as that in subsection~\ref{ss:lower} for the $m>1$ case. We omit the details and conclude that the lower bounds in \eqref{levelset-kpp} (when $m<1$) and \eqref{levelset-FDE} are proved.

\subsection{Upper  bound on the level sets  in \eqref{levelset-kpp} and \eqref{levelset-FDE}}\label{ss:upper-kpp}

Let $\lambda \in(0,1)$ and $\ep >0$ small be given. Recall that
$\gamma=\min\left(\alpha,\frac{2}{1-m}\right)$ and assume that
\begin{equation}\label{eq:beta-plus}
1\leq \beta < \min \left( 1 + \frac{1}{\gamma}, m + \frac{2}{\gamma} \right).
\end{equation}
As in subsection \ref{ss:lower-kpp}, this precludes the case $\beta=1$ and $\gamma = \frac{2}{1-m}$, which we again consider separately at the end of this subsection.  

{}From \eqref{algebraic-acc-kpp}, $\gamma \leq \alpha$ and the comparison principle, it is enough (to prove the upper estimate on the level sets) to consider the case where
\begin{equation*}
\label{algebraic-encore}
u_0(x)=\frac{\overline C}{x^{\gamma}}, \quad \forall x\geq x_0>1 .
\end{equation*}

Furthermore, without loss of generality we assume that $x_0$ is large enough so that $\frac{\overline C}{x_0^{\gamma}} < 1$ and
\begin{equation}\label{C-bar-2}
\max \left( m \frac{\gamma \overline{C}^{m-\beta} (\gamma + 1 - \gamma \beta)}{x_0 ^{2+ (m-\beta) \gamma}}
, \,  m (m+\beta -1) \frac{\gamma^2 \overline{C}^{m - \beta}}{x_0^{2 + (m-\beta) \gamma}} , \,  m (m+\beta -1) \frac{\gamma^2 \overline{C}^{2 - 2\beta}}{x_0^{2 \gamma (1-\beta) +2}} \right) \leq \frac{\ep}{4}.
\end{equation}
Such an $x_0$ exists thanks to \eqref{eq:beta-plus}.

Let us select $\rho:=\overline r +\frac \ep 2$. Similarly as in Section \ref{s:PME}, we define $w(t,x)$ by either \eqref{def-w-kpp} or \eqref{def-w}, so that it solves 
$$\partial_t w (t,x) = \rho w^\beta (t,x), \quad w (0,x) = u_0 (x).$$
We prove below that
$$
\psi(t,x):=\min\left(1,w(t,x)\right)
$$
is a supersolution for equation \eqref{eq} after some large enough time $T>0$. More precisely, let~$T>0$ be large enough so that 
$$\psi (t,x) = 1, \quad \forall t \geq T, \forall x \leq x_0,$$
which is possible thanks to the fact that the infimum of $u_0$ on $(-\infty,x_0]$ is positive. Since~1 solves~\eqref{eq}, we only need to check that $\psi (t,x)$ is a supersolution at the points $(t,x)$ where $\psi(t,x)=w(t,x)<1$. In view of \eqref{nonlinearity-accbis-FDE}, we get, as in \eqref{qqch}, 
\begin{align}
 &\partial _t w(t,x)-\partial _{xx}(w^{m})(t,x)-f(w(t,x))\nonumber\\
&\quad \quad \quad\quad\geq \frac \ep 2 w ^\beta(t,x)-m\varphi'(x)w^{m+\beta-1} (t,x)-m(m+\beta-1) \varphi^2(x) w ^{m+2\beta -2}(t,x)\nonumber\\
&\quad \quad \quad\quad\geq w ^\beta(t,x)\Big( \frac{\ep}{2}-m \varphi' (x) \left( \frac{w(t,x)}{u_0(x)}\right)^{m-1} u_0^{m-1}(x)\nonumber\\
& \quad \quad \quad\quad \quad \quad \quad \quad \quad - m (m+\beta -1) \varphi^2 (x) \left(\frac{w(t,x)}{u_0(x)}\right)^{m+\beta -2} u_0^{m+\beta-2}(x) \Big),\label{qqchnew}
\end{align}
where we recall  that $\varphi(x)= \frac{u_0 '(x)}{u_0^\beta(x)}$. We again distinguish below the two cases $\beta \leq 2 -m$ and~$\beta > 2-m$. \smallskip

$\bullet$ Let us first consider the case when $\beta \leq 2 -m$. Since $m< 1$ and $w (t,\cdot) \geq u_0 (\cdot)$ for any~$t \geq 0$, we have that
\begin{equation}\label{eq:wu0}
\left(\frac{w(t,x)}{u_0(x)}\right)^{m-1} , \left(\frac{w(t,x)}{u_0(x)}\right)^{m+\beta - 2} \leq 1.
\end{equation}
{}From our choice of $T$, we only need to consider $x \geq x_0$, so that $u_0 (x) = \frac{\overline{C}}{x^\gamma}$. As before, it is straightforward to compute that
\begin{equation}\label{eq:phiFDE1}
\varphi^2 (x) u_0^{m+\beta -2} (x) = \frac{\gamma^2 \overline{C}^{m - \beta}}{x^{2 + (m-\beta) \gamma}} \leq \frac{\gamma^2 \overline{C}^{m - \beta}}{x_0^{2 + (m-\beta) \gamma}},
\end{equation}
as well as
\begin{equation}\label{eq:phiFDE2}
\varphi' (x) u_0^{m-1}(x) = \frac{\gamma \overline{C}^{m-\beta} (\gamma + 1 - \gamma \beta)}{x^{2+ (m-\beta) \gamma}} \leq  \frac{\gamma \overline{C}^{m-\beta} (\gamma + 1 - \gamma \beta)}{x_0 ^{2+ (m-\beta) \gamma}},\end{equation}
thanks to the fact that $\beta < m + \frac{2}{\gamma}$. Putting \eqref{qqchnew} together with \eqref{eq:wu0}, \eqref{eq:phiFDE1} and \eqref{eq:phiFDE2}, we obtain that
\begin{align}
 &\partial _t w(t,x)-\partial _{xx}(w^{m})(t,x)-f(w(t,x))\nonumber\\
&\quad \quad \quad\quad\geq w ^\beta(t,x)\left( \frac{\ep}{2}-m \frac{\gamma \overline{C}^{m-\beta} (\gamma + 1 - \gamma \beta)}{x_0 ^{2+ (m-\beta) \gamma}}
- m (m+\beta -1) \frac{\gamma^2 \overline{C}^{m - \beta}}{x_0^{2 + (m-\beta) \gamma}} \right),\nonumber
\end{align}
for all $t \geq T$ and $x \in \mathbb{R}$ such that $w(t,x) < 1$. This is nonnegative by \eqref{C-bar-2}, which proves that~$\psi$ is a supersolution in $(T,+\infty)\times \R$ when $\beta \leq 2-m$. 

$\bullet$ Let us now consider the case when $\beta > 2-m$. Then we still have that $(\frac{w(t,x)}{u_0(x)})^{m-1} \leq 1$, and \eqref{eq:phiFDE2} holds. On the other hand, $m + 2 \beta - 2 > \beta$ and thus $w^{m+ 2 \beta -2} < w^\beta $. Going back to  the second line of~\eqref{qqchnew} and replacing also $\varphi^2 (x)$ by its explicit expression when $x \geq x_0$, one arrives at
\begin{align}
 &\partial _t w(t,x)-\partial _{xx}(w^{m})(t,x)-f(w(t,x))\nonumber\\
&\quad \quad \quad\quad\geq w ^\beta(t,x)\left( \frac{\ep}{2}-m \frac{\gamma \overline{C}^{m-\beta} (\gamma + 1 - \gamma \beta)}{x_0 ^{2+ (m-\beta) \gamma}}
- m (m+\beta -1) \frac{\gamma^2 \overline{C}^{2 - 2\beta}}{x_0^{2 \gamma (1-\beta) +2}} \right). \nonumber
\end{align}
Notice that we have bounded $\varphi^2 (x)$ by its value at $x=x_0$, which is possible because we assumed $2 \gamma (1-\beta) + 2 >0$. Thanks to \eqref{C-bar-2}, we also conclude that $\psi$ is a supersolution in $(T,+\infty) \times \R$ when $\beta > 2-m$. \medskip

Let us now proceed with the proof on the upper bound on the level sets, which is now identical to the case $m>1$. Since $u_0 \equiv  w(0,\cdot)$ and $w$ is increasing in time, it is clear that $u_0 (\cdot) \leq \psi (T,\cdot)$. Applying a comparison principle, it follows that
\begin{equation*}\label{comparisonbis}
u(t,x)\leq\psi(t+T,x)\leq w(t+T,x),\quad \forall (t,x)\in [0,+\infty)\times  \mathbb{R}.
\end{equation*}
For $t\geq t_\lambda$ and $x\in E_\lambda (t)$, it follows that
$w(t+T,x)\geq \lambda$ which, using the expression for $w$ transfers into either
\begin{align*}
x\leq  \left( \frac{\overline{C}}{\lambda} \right)^{1/\gamma} e^{\frac{\overline r + \frac \ep 2}{\gamma} (t+T)} <  e^{\frac{\overline r + \ep }{\gamma} t} =:x^+(t),
\end{align*}
when $\beta = 1$, or
\begin{align*}
x\leq& \left( \left(\frac{\overline C}{\lambda}\right)^{\beta-1}+ \left(\overline r +\frac
\ep 2\right)\overline C^{\beta -1}(\beta -1)(t+T)\right)^{\frac{1}{\gamma(\beta-1)}}\\
<& \left((\overline r +
\ep )\overline C^{\beta -1}(\beta -1)t\right)^{\frac{1}{\gamma(\beta-1)}}=:x^+(t),
\end{align*}
when $\beta >1$, for $t\geq  T_{\lambda,\ep}$ chosen sufficiently large. This concludes the proof of  the upper bound in \eqref{levelset-kpp} (when $0<m<1$) since $\frac 1 \gamma= \Gamma$, and  in~\eqref{levelset-FDE} since $\gamma=\alpha$ when $0<\alpha<\frac{2}{1-m}$.  \medskip

\noindent{\bf The case $\beta=1$ and $\gamma= \frac{2}{1-m}$.} Again, the argument only slightly differs due to the fact that the exponents $2+(m-\beta)\gamma$ of $x_0$ appearing in \eqref{C-bar-2} now vanish. Hence, in order for \eqref{C-bar-2} to hold, we need to enlarge not only $x_0$ but also $\overline{C}$ (notice in \eqref{C-bar-2} the term $\overline C^{m-\beta}=\overline C^{m-1}$ which is small when $\overline C$ is large). This has no incidence because in the Fisher-KPP case, the constant $\overline{C}$ does not appear in the upper bound \eqref{levelset-kpp} on the level sets. The remainder of the argument is as above and we omit the details.\qed

\appendix

\section{Lower acceleration bound \eqref{dernier-truc} for fast diffusion}\label{s:appendix}

Here we prove statement $(ii)$ of Theorem~\ref{th:reste}, which provides a (a priori not optimal) polynomial lower bound on the level sets of solutions in the fast diffusion case $0<m<1$. Throughout this section we assume that \eqref{algebraic-acc-FDE} holds with
$$\frac{1}{1-m} < \alpha \leq \frac{2}{1-m},$$
and also that
$$\beta >1 , \quad  m+ \frac{2}{\alpha} \leq   \beta <  1+ \frac{1}{\alpha}.$$
Let us note that the case when \eqref{algebraic-acc-kpp-cor} holds easily follows from the space asymptotics of solutions of the fast diffusion equation, as explained in subsection~\ref{s:heavy}. Thus we omit this situation here.

As before, the proof relies on the construction of an accelerating subsolution. This subsolution has the same shape than that of Section~\ref{s:FDE-kpp}, however technical details will differ. Let~$\ep >0$ arbitrarily small be given, and
$$\eta  > \beta +2.$$
Then let $\rho >0$ be such that
\begin{equation}
\label{def-rho-kppter}
\max\left(\frac{r \beta }{1+\eta},r-\ep\right) <\rho <r.
\end{equation}
By comparison and without loss of generality, we assume that $u_0 (x) = \frac{C}{x^\alpha}$ for all $x \geq x_0$. Let us also introduce an auxiliary initial datum
$$\tilde{u}_0(x): = \frac{C - \varepsilon}{C} u_0(x),$$
so that for all $x \geq x_0$,
\begin{equation}\label{algebraic-gammater}
\tilde{u}_0  (x)= \frac{C- \varepsilon}{x^\alpha}.
\end{equation}
Similarly as before, we define $w(t,x)$ as the solution of
$$\partial_t w (t,x) = \rho w^\beta (t,x), \quad w(0,x)= \tilde{u}_0 (x),$$
which is given by \eqref{def-w}, up to replacing $u_0(x)$ by $\tilde{u}_0(x)$.

Now select $A>1$ large enough so that
\begin{equation*}
A> \frac{1}{\kappa^\eta (1 +\eta) } , \quad
\left(\frac{1}{A(1+\eta)} \right)^{1/\eta} \frac{\eta}{1+\eta} \leq s_0,
\end{equation*}
where
\begin{equation}
\label{def-kappa-kppter}
\kappa:=\inf _{x\in(-\infty,x_0)} \tilde{u}_0(x)\in(0,1],
\end{equation}
and $s_0$ is as in \eqref{nonlinearity-acc}. Then, as in subsection~\ref{ss:lower-kpp}, we define $X(t) \in \mathbb{R}$ such that
$$w(t,X(t)) = \left( \frac{1}{A(1+\eta)}\right)^{1/\eta}.$$ 
{}From our choice of $A$, we still have that $X(t)> x_0$ and, from the explicit expressions for~$w(t,\cdot)$ and $\tilde{u}_0$ on $(x_0,+\infty)$, it is straightforward that $X(t)$ is uniquely defined and $w(t,x) < \left( \frac{1}{A(1+\eta)}\right)^{1/\eta}$ if and only if $x > X(t)$. Let us also note that $X(t)$ is increasing with respect to time and tends to $+\infty$ as $t \to +\infty$.

Now let also $\delta$ be small enough so that
\begin{equation}\label{def-delta1-ter}
 \delta <  \left(\frac{1}{A(1+\eta)}\right)^{1/\eta},
\end{equation}
and
\begin{equation}\label{def-delta2-ter}
\frac{A \delta^\eta}{1 - A \delta^\eta} \times \left( 2  m \eta  +  \eta (\beta + \eta -1)   + (1-m) A \eta^2 \frac{\delta^{\eta}}{1- A \delta^\eta} \right)<  m + \beta -1 . 
\end{equation}
We are now ready to construct a subsolution.

\begin{lem}
\label{lem:sub-kppter} Under the above assumptions, define
\begin{equation*}
v(t,x):=
\begin{cases}
\left(\frac{1}{A(1+\eta)} \right)^{1/\eta} \frac{\eta}{1+\eta} &\mbox{ if } x\leq X(t)\\
w(t,x)(1-Aw^\eta (t,x)) &\mbox{ if } x>X(t).
\end{cases}
\end{equation*}
Then there exists $T\geq 0 $ such that $v(t,x)$ is a (generalized) subsolution for all $t \geq T$ and~$x \in \R$.
\end{lem}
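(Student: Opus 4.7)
\medskip

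\noindent\textbf{Proof plan.}
The plan is to verify that $\mathcal{L}v:=\partial_t v-\partial_{xx}(v^m)-f(v)\leq 0$ holds in each of the two smooth subregions $\{x<X(t)\}$ and $\{x>X(t)\}$ once $t\geq T$ for a suitably chosen $T$. First I would check that $v$ is $C^1$ in $x$ (and in $t$) at the junction: on the right side $\partial_x v=\partial_x w\cdot[1-A(1+\eta)w^\eta]$, and the factor $1-A(1+\eta)w^\eta$ vanishes precisely at $x=X(t)$, matching the zero derivative on the constant left side (similarly $\partial_t v$ vanishes at $X(t)$ from the right). This $C^1$ matching is what promotes $v$ to a generalized subsolution provided the smooth inequalities hold. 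The left subregion is trivial: $v$ is a positive constant there, so $\mathcal{L}v=-f(v)\leq 0$ by Assumption~\ref{ass:f}.

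On the right subregion, I would first control the reaction and time terms. Setting $\tau:=Aw^\eta\in[0,1/(1+\eta))$, the identity $\partial_t w=\rho w^\beta$ and \eqref{nonlinearity-acc-FDE} give
$$\partial_t v-f(v)\leq w^\beta\, h(\tau),\qquad h(\tau):=\rho(1-(1+\eta)\tau)-r(1-\tau)^\beta.$$
A direct computation shows $h'(0)=-\rho(1+\eta)+r\beta<0$ by \eqref{def-rho-kppter}, and $h''\leq 0$ for $\beta>1$; hence $h$ is strictly decreasing on $[0,1/(1+\eta)]$ and $h(\tau)\leq h(0)=\rho-r$. Thus $\partial_t v-f(v)\leq -c_0 w^\beta$ with $c_0:=r-\rho>0$.

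Next, writing $\varphi(x):=\tilde u_0'(x)/\tilde u_0^\beta(x)$ (so $\partial_x w=\varphi\, w^\beta$ and $\varphi<0$, $\varphi'>0$ for $x\geq x_0$), a careful expansion of $\partial_{xx}(v^m)$ with $R:=1-\tau$ yields
$$\partial_{xx}(v^m)=m w^{m+\beta-1}\Bigl[\varphi'(x)R^{m-1}(1-(1+\eta)\tau)+\varphi^2(x)\, w^{\beta-1}R^{m-2}g(\tau)\Bigr],$$
where
$$g(\tau):=(m+\beta-1)(1-\tau)^2-\eta(2m+\beta+\eta-1)\tau(1-\tau)-(1-m)\eta^2\tau^2.$$
Since $\varphi'>0$, $R>0$ and $1-(1+\eta)\tau\geq 0$ on $\{x>X(t)\}$, the first bracket is nonnegative. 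For the second, substituting $u:=\tau/(1-\tau)$ rewrites $g(\tau)/(1-\tau)^2=(m+\beta-1)-\eta(2m+\beta+\eta-1)u-(1-m)\eta^2 u^2$, which is strictly decreasing in $u\geq 0$. Condition \eqref{def-delta2-ter} is precisely $g(\tau_\delta)/(1-\tau_\delta)^2>0$ at $\tau_\delta:=A\delta^\eta$, so by monotonicity $g(\tau)\geq 0$ for all $\tau\in[0,\tau_\delta]$.

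Finally, I would split the right subregion by $Y(t)>X(t)$ defined by $w(t,Y(t))=\delta$. On the outer piece $\{x\geq Y(t)\}$ one has $\tau\leq\tau_\delta$, so both brackets are nonnegative, $\partial_{xx}(v^m)\geq 0$, and $\mathcal{L}v\leq -c_0 w^\beta\leq 0$. On the transitional piece $\{X(t)<x<Y(t)\}$, $w$ and $R$ stay bounded (with $w^\beta\geq\delta^\beta$), but $g(\tau)$ can be negative; a short calculation in fact shows $g(1/(1+\eta))=-\eta^2(1+\eta)/(1+\eta)^2<0$, so the algebraic argument from the outer region fails here. The resolution is that since $\alpha(\beta-1)<1$, both $|\varphi(x)|$ and $|\varphi'(x)|$ are positive powers of $x^{-1}$ and decay as $x\to+\infty$; since $X(t)\to+\infty$ as $t\to+\infty$, one can choose $T$ large enough so that $|\varphi(X(t))|$ and $|\varphi'(X(t))|$ (and hence their values throughout $[X(t),Y(t)]$) are small enough to ensure $|\partial_{xx}(v^m)(t,x)|\leq (c_0/2)\delta^\beta$ on the transitional piece. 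Combined with $\partial_t v-f(v)\leq -c_0\delta^\beta$, this gives $\mathcal{L}v\leq -c_0\delta^\beta/2\leq 0$ there as well. The main obstacle is precisely this transitional region; its treatment hinges on the fact that $X(t)\to+\infty$ pushes the region of possibly-negative $g$ into the spatial zone where the coefficients $\varphi,\varphi'$ are uniformly small.
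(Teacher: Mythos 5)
Your proposal is correct and follows the same two-region strategy as the paper: on the outer piece $\{w\leq\delta\}$ use the smallness condition \eqref{def-delta2-ter} to conclude $\partial_{xx}(v^m)\geq 0$, and on the transitional piece $\{\delta\leq w\leq (A(1+\eta))^{-1/\eta}\}$ exploit that $X(t)\to+\infty$ so that $T$ large makes $\varphi,\varphi'$ uniformly small there. Your packaging is a bit cleaner than the paper's: the one-line identity for $\partial_{xx}(v^m)$ in terms of $g(\tau)$ and the substitution $u=\tau/(1-\tau)$ make the role of \eqref{def-delta2-ter} transparent, and the exact function $h(\tau)$ (checked to be decreasing) replaces the convexity linearization $(1-\tau)^\beta\geq 1-\beta\tau$ that the paper uses; also, on the transitional piece you close the argument by bounding $w$ from \emph{below} by $\delta$ and comparing against $-c_0\delta^\beta$, whereas the paper bounds $w$ from \emph{above} by $A^{-1/\eta}$ and relies on cancellation against the $w^\beta$ reaction term. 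Both routes lead to the same conclusion with the same key ideas, so these are cosmetic rather than structural differences.
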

By generalized subsolution, we mean that $v$ is $C^1$ with respect to $x$ and satisfies
$$\mathcal{L} v (t,x) := \partial_t v (t,x) - \partial_{xx} (v^m) (t,x) - f(v(t,x)) \leq 0$$
on both subdomains $\{ x < X(t)\}$ and $\{x > X(t)\}$. This is enough to apply a comparison principle, as we explained in subsection~\ref{ss:lower-kpp}. 

\begin{proof} The $C^1$-regularity of $v$ is straightforward, as $w \mapsto w(1-Aw^\eta)$ reaches its maximum when $w= (\frac{1}{A(1+\eta)})^{1/\eta}$. Moreover, any positive constant is a subsolution, so that we only need to check the parabolic inequality $\mathcal{L} v \leq 0$ on the right subdomain, i.e. when $x > X(t)$ (and~$t \geq T$ to be chosen below).

First, recall that $X(t) > x_0$ for any $t \geq 0$. In particular, \eqref{algebraic-gammater} is available. Similarly as before, we define $\varphi = \frac{\tilde{u}_0 '}{\tilde{u}_0^\beta}$, and compute 
$$\varphi^2 (x) = \frac{\alpha^2}{(C-\varepsilon)^{2\beta-2} x^{2 (\alpha + 1 - \alpha \beta)}} , \qquad \varphi ' (x) = \frac{\alpha (\alpha + 1 - \alpha \beta)}{(C-\varepsilon)^{\beta-1} x^{\alpha + 2 - \alpha \beta}}.$$
We know by assumption that $\alpha \beta < \alpha +1$. Since $X (t) \to +\infty$ as $t \to +\infty$, we can choose $T$ large enough so that, for all $t \geq T$ and $x \geq X(t)$, we have $\varphi^2$ and $\varphi'$ small enough so that
\begin{equation}\label{phi-estimter}
0<   \varphi ^{2}  A^{-\frac{m+  \beta -2}{\eta}} \Big(2 m +   \beta+\eta -1    + 2 (1-m)    \eta \Big)  +     \varphi ' A^{-\frac{m  -1}{\eta}}   \leq \frac{r - \rho}{2^{1-m} m \eta}.
\end{equation}
We are now ready to compute $\mathcal{L} v$. First, from our choice of $A$, \eqref{nonlinearity-acc-FDE} and a convexity inequality, we have that
\begin{equation*}
f(v(t,x))\geq rw^\beta (t,x)-rA \beta w^{\beta + \eta }(t,x).
\end{equation*}
Next we have
\begin{equation*}
\partial _t v(t,x)=\rho w^\beta (t,x)-A\rho (1+\eta) w^{\beta + \eta }(t,x),
\end{equation*}
and by the same computations as in Section \ref{s:PME},
\begin{eqnarray*}
\partial _{xx}(v^{m})&=&\partial _{xx}(w^{m})(1-Aw^{\eta})^{m}+2\partial _x(w^{m})\partial _x((1-Aw^{\eta})^{m})+w^{m}\partial_{xx}((1-Aw^{\eta})^{m})\nonumber \\
&= &  w^{m+\beta-1} \varphi' \left[m  (1-Aw^\eta )^m - A m \eta  w^\eta (1-Aw^\eta)^{m-1} \right]\\
&& + w^{m+2\beta-2} \varphi^2 \Big[ m(m+\beta- 1)  (1-A w^\eta)^m - 2 A m^2 \eta  w^\eta (1-Aw^\eta)^{m-1}  \\
&& \hspace{2cm} - Am\eta(\beta+\eta -1) w^{\eta}(1-Aw^{\eta})^{m-1}\\
&& \hspace{2cm} - (1-m) A^2 m \eta^2  w^{2 \eta} (1-Aw^\eta)^{m-2} \Big],
\end{eqnarray*}
where again $\varphi = \frac{\tilde{u}_0 '}{\tilde{u}_0^\beta}$.

Let us first assume that $w (t,x) \leq \delta$, which is equivalent to $x \geq X_1 (t)$ for some well-chosen $X_1 (t)$. Since $\delta$ satisfies \eqref{def-delta1-ter} and \eqref{def-delta2-ter}, and using also the fact that $\varphi '$ is positive, it is straightforward to check that $\partial_{xx} (v^m) \geq 0$. Then, thanks to \eqref{def-rho-kppter},
$$\mathcal{L} v \leq (\rho - r ) w^\beta + (rA\beta - A \rho (1+\eta)) w^{\beta + \eta} \leq 0.$$
It remains to check the parabolic inequality when $\left(\frac 1{A(1+\eta)}\right)^{1/\eta}\geq 
w(t,x) \geq  \delta$, i.e. when $X (t) \leq x \leq X_1 (t)$; this is where the above choice of $T$ matters. We go back to the computation of $\partial_{xx} (v^m)$. First, we remove some positive terms and find that
\begin{eqnarray*}
\partial _{xx}(v^{m})&\geq&
  -A m \eta \varphi ' w^{m+\eta+\beta-1}(1-Aw^\eta)^{m-1}+ w^{m+2\beta +\eta -2} \Big[  - 2 A m^2 \eta \varphi^2  (1-Aw^\eta)^{m-1}  \\
&& \hspace{0.8cm} - Am\eta(\beta+\eta -1)\varphi ^2  (1-Aw^{\eta})^{m-1} - (1-m) A^2 m \eta^2 \varphi^2 w^{ \eta} (1-Aw^\eta)^{m-2} \Big].
\end{eqnarray*}
Now, using $0<m<1$ and $1 \geq 1 - A w^\eta (t,x) \geq \frac{\eta}{1+\eta} \geq \frac{1}{2}$ (recall that $\eta > \beta +2$), we obtain that
\begin{eqnarray}
- \partial _{xx}(v^{m})& \leq  &  w^{\beta } \left[2^{1-m} Am\eta \varphi ' w^{m + \eta -1}+ 2^{2-m} Am^{2}\eta \varphi ^{2}  w^{m+\eta+  \beta -2} \nonumber \right.\\
&& \hspace{0.8cm} +  2^{1-m} Am\eta(\beta+\eta -1)\varphi ^2 w^{m+\eta+ \beta  -2} \nonumber\\
& &  \hspace{0.8cm}\left. + 2^{2-m}(1-m)   A^2 m \eta^2 \varphi^2 w^{m+2 \eta+ \beta-2} \right].\nonumber
\end{eqnarray}
But the exponents $m+\eta -1$ and $ m + \eta + \beta -2$ are positive, and we also know that $w \leq \frac{1}{A^{1/ \eta}}$. Thus, using also \eqref{phi-estimter},
\begin{eqnarray}
- \partial _{xx}(v^{m})& \leq  &  w^{\beta } \left[ 2^{1-m} m\eta \varphi ' A^{-\frac{m  -1}{\eta}}+2^{2-m} m^{2}\eta \varphi ^{2}  A^{-\frac{m+  \beta -2}{\eta}}    \nonumber \right. \\
&&  \hspace{0.8cm} +  2^{1-m} m\eta(\beta+\eta -1)\varphi ^2 A^{-\frac{m+ \beta  -2}{\eta}} \nonumber\\
& &  \hspace{0.8cm} \left. + 2^{2-m}(1-m)   m \eta^2 \varphi^2 A^{-\frac{m+ \beta-2}{\eta}} \right], \nonumber\\
& \leq & (r - \rho) w^\beta. \nonumber
\end{eqnarray}
We conclude that 
$$\mathcal{L} v \leq (\rho - r ) w^\beta + (r - \rho) w^\beta + (rA\beta - A \rho (1+\eta)) w^{\beta + \eta} \leq 0.$$
Lemma~\ref{lem:sub-kppter} is proved.
 \end{proof}
 
Before applying a comparison principle, we have to show that the initial conditon $u_0$ and the subsolution $v(T,\cdot)$ of Lemma \ref{lem:sub-kppter} are ordered. The difficulty here is that $T$ may be large while $v$ is increasing in time. To circumvent this, we will prove that there exists a spatial shift $X>0$ such that
\begin{equation}\label{u0v-order-ter}
u_0 (\cdot - X) \geq v (T, \cdot).
\end{equation}
To do so, we look at the asymptotics of $v (T,x)$ as $x \to +\infty$. Recalling that $\beta >1$ and using the explicit expression for $\tilde{u}_0$ when $x$ is large, we have that
$$v(T,x) = \frac{1}{\left( \frac{x^{\alpha (\beta-1)}}{(C-\varepsilon)^{\beta-1} } - \rho (\beta -1)T\right)^{\frac{1}{\beta-1}}} \times \left( 1-A \frac{1}{\left( \frac{x^{\alpha (\beta-1)}}{(C-\varepsilon)^{\beta-1} }- \rho (\beta -1)T\right)^{\frac{\eta}{\beta-1}}} \right).$$
Thus, as $x \to +\infty$,
$$v(T,x) \sim \frac{C-\varepsilon}{x^\alpha},$$
and there exists $X'>x_0$  large enough so that
$$v(T,x) \leq \frac{C }{x^\alpha},  \quad \forall x \geq X'.$$
Recall also that, for all $x \in \mathbb{R}$,
$$v(T,x) \leq \left(\frac{1}{A(1+\eta)} \right)^{1/\eta} \frac{\eta}{1+\eta} < \kappa .$$
Now choose $X =X' - x_0$, and find that
$$ u_0 (x-X) =  \frac{C}{(x-X)^\alpha} \geq \frac{C}{x^\alpha}, \quad \forall x \geq X',$$
as well as, according to \eqref{def-kappa-kppter} and $\tilde{u}_0 = \frac{C-\varepsilon}{C} u_0$, that
$$u_0 (x - X) \geq \kappa ,\quad \forall x \leq X'.$$
Putting together the last four inequalities, we find that \eqref{u0v-order-ter} holds as announced. Hence, by comparison,
$$
u(t,x)\geq v(T+t,X+x)
$$
for all $t>0$, $x\in \R$. The proof of the lower bound on the level sets then proceeds as in subsection \ref{ss:lower}, so that we omit the details. This ends the proof of Theorem~\ref{th:reste} $(ii)$. \qed

\bibliographystyle{siam}    
\bibliography{biblio2}

\def\cprime{$'$}
\begin{thebibliography}{10}

\bibitem{Alf-fujita}
{\sc M.~Alfaro}, {\em Fujita blow up phenomena and hair trigger effect: the
  role of dispersal tails}, Ann. Inst. H. Poincar\'e Anal. Non Lin\'eaire, 34
  (2017), pp.~1309--1327.

\bibitem{Alf-tails}
\leavevmode\vrule height 2pt depth -1.6pt width 23pt, {\em Slowing {A}llee
  effect vs. accelerating heavy tails in monostable reaction diffusion
  equations}, Nonlinearity, 30 (2017), pp.~687--702.

\bibitem{Alf-Cov-preprint}
{\sc M.~Alfaro and J.~Coville}, {\em Propagation phenomena in monostable
  integro-differential equations: {A}cceleration or not?}, J. Differential
  Equations, 263 (2017), pp.~5727--5758.

\bibitem{Aro-Wei-78}
{\sc D.~G. Aronson and H.~F. Weinberger}, {\em Multidimensional nonlinear
  diffusion arising in population genetics}, Adv. in Math., 30 (1978),
  pp.~33--76.

\bibitem{Aud-Vaz-16}
{\sc A.~Audrito and J.~L. V{\'a}zquez}, {\em The {F}isher-{KPP} problem with
  doubly nonlinear diffusion}, arXiv preprint arXiv:1601.05718,  (2016).

\bibitem{Aud-Vaz-17}
\leavevmode\vrule height 2pt depth -1.6pt width 23pt, {\em The {F}isher--{KPP}
  problem with doubly nonlinear “fast” diffusion}, Nonlinear Analysis, 157
  (2017), pp.~212--248.

\bibitem{Beb-Li-Li-97}
{\sc J.~W. Bebernes, C.~Li, and Y.~Li}, {\em Travelling fronts in cylinders and
  their stability}, Rocky Mountain J. Math., 27 (1997), pp.~123--150.

\bibitem{Cab-Roq-13}
{\sc X.~Cabr{\'e} and J.-M. Roquejoffre}, {\em The influence of fractional
  diffusion in {F}isher-{KPP} equations}, Comm. Math. Phys., 320 (2013),
  pp.~679--722.

\bibitem{Car-71}
{\sc E.~A. Carl}, {\em Population control in arctic ground squirrels}, Ecology,
  52 (1971), pp.~395--413.

\bibitem{Cov-preprint}
{\sc J.~Coville}, {\em Travelling fronts in asymmetric nonlocal reaction
  diffusion equation: the bistable and ignition case}, preprint,  (2017).

\bibitem{Cov-Dup-07}
{\sc J.~Coville and L.~Dupaigne}, {\em On a non-local equation arising in
  population dynamics}, Proc. Roy. Soc. Edinburgh Sect. A, 137 (2007),
  pp.~727--755.

\bibitem{book-Das-Ken}
{\sc P.~Daskalopoulos and C.~E. Kenig}, {\em Degenerate diffusions}, vol.~1 of
  EMS Tracts in Mathematics, European Mathematical Society (EMS), Z\"urich,
  2007.
\newblock Initial value problems and local regularity theory.

\bibitem{Eng-85}
{\sc H.~Engler}, {\em Relations between travelling wave solutions of
  quasilinear parabolic equations}, Proc. Amer. Math. Soc., 93 (1985),
  pp.~297--302.

\bibitem{Fis-37}
{\sc R.~A. Fisher}, {\em The wave of advance of advantageous genes}, Ann. of
  Eugenics, 7 (1937), pp.~355--369.

\bibitem{Gar-11}
{\sc J.~Garnier}, {\em Accelerating solutions in integro-differential
  equations}, SIAM J. Math. Anal., 43 (2011), pp.~1955--1974.

\bibitem{Gil-Ker-04}
{\sc B.~H. Gilding and R.~Kersner}, {\em Travelling waves in nonlinear
  diffusion-convection reaction}, vol.~60 of Progress in Nonlinear Differential
  Equations and their Applications, Birkh\"auser Verlag, Basel, 2004.

\bibitem{Gui-Hua-preprint}
{\sc C.~Gui and T.~Huan}, {\em Traveling wave solutions to some reaction
  diffusion equations with fractional {L}aplacians}, Calc. Var. Partial
  Differential Equations, 54 (2015), pp.~251--273.

\bibitem{Gur-Nis-75}
{\sc W.~Gurney and R.~Nisbet}, {\em The regulation of inhomogeneous
  populations}, Journal of Theoretical Biology, 52 (1975), pp.~441--457.

\bibitem{Gur-Mac-77}
{\sc M.~E. Gurtin and R.~C. MacCamy}, {\em On the diffusion of biological
  populations}, Math. Biosci., 33 (1977), pp.~35--49.

\bibitem{Had-Rot-75}
{\sc K.~P. Hadeler and F.~Rothe}, {\em Travelling fronts in nonlinear diffusion
  equations}, J. Math. Biol., 2 (1975), pp.~251--263.

\bibitem{Ham-Roq-10}
{\sc F.~Hamel and L.~Roques}, {\em Fast propagation for {KPP} equations with
  slowly decaying initial conditions}, J. Differential Equations, 249 (2010),
  pp.~1726--1745.

\bibitem{Her-Pie-85}
{\sc M.~A. Herrero and M.~Pierre}, {\em The {C}auchy problem for {$u_t=\Delta
  u^m$} when {$0<m<1$}}, Trans. Amer. Math. Soc., 291 (1985), pp.~145--158.

\bibitem{Kal-87}
{\sc A.~S. Kalashnikov}, {\em Some problems of the qualitative theory of
  second-order nonlinear degenerate parabolic equations}, Uspekhi Mat. Nauk, 42
  (1987), pp.~135--176, 287.

\bibitem{Kam-76}
{\sc Y.~Kametaka}, {\em On the nonlinear diffusion equation of
  {K}olmogorov-{P}etrovskii-{P}iskunov type}, Osaka J. Math., 13 (1976),
  pp.~11--66.

\bibitem{Kam-Ros-04}
{\sc S.~Kamin and P.~Rosenau}, {\em Convergence to the travelling wave solution
  for a nonlinear reaction-diffusion equation}, Atti Accad. Naz. Lincei Cl.
  Sci. Fis. Mat. Natur. Rend. Lincei (9) Mat. Appl., 15 (2004), pp.~271--280.

\bibitem{Kin-Mac-03}
{\sc J.~R. King and P.~M. McCabe}, {\em On the {F}isher-{KPP} equation with
  fast nonlinear diffusion}, R. Soc. Lond. Proc. Ser. A Math. Phys. Eng. Sci.,
  459 (2003), pp.~2529--2546.

\bibitem{Kol-Pet-Pis-37}
{\sc A.~N. Kolmogorov, I.~G. Petrovsky, and N.~S. Piskunov}, {\em Etude de
  l'\'equation de la diffusion avec croissance de la quantit\'e de mati\`{e}re
  et son application \`{a} un probl\`{e}me biologique}, Bull. Univ. Etat
  Moscou, S\'er. Inter. A 1 (1937), pp.~1--26.

\bibitem{McKea-75}
{\sc H.~P. McKean}, {\em Application of {B}rownian motion to the equation of
  {K}olmogorov-{P}etrovskii-{P}iskunov}, Comm. Pure Appl. Math., 28 (1975),
  pp.~323--331.

\bibitem{Med-03}
{\sc G.~S. Medvedev, K.~Ono, and P.~J. Holmes}, {\em Travelling wave solutions
  of the degenerate {K}olmogorov-{P}etrovski-{P}iskunov equation}, European J.
  Appl. Math., 14 (2003), pp.~343--367.

\bibitem{Uch-78}
{\sc K.~Uchiyama}, {\em The behavior of solutions of some nonlinear diffusion
  equations for large time}, J. Math. Kyoto Univ., 18 (1978), pp.~453--508.

\bibitem{Vaz-book2}
{\sc J.~L. V\'azquez}, {\em Smoothing and decay estimates for nonlinear
  diffusion equations}, vol.~33 of Oxford Lecture Series in Mathematics and its
  Applications, Oxford University Press, Oxford, 2006.
\newblock Equations of porous medium type.

\bibitem{Vaz-book}
\leavevmode\vrule height 2pt depth -1.6pt width 23pt, {\em The porous medium
  equation}, Oxford Mathematical Monographs, The Clarendon Press, Oxford
  University Press, Oxford, 2007.
\newblock Mathematical theory.

\bibitem{Wei-82}
{\sc H.~F. Weinberger}, {\em Long-time behavior of a class of biological
  models}, SIAM J. Math. Anal., 13 (1982), pp.~353--396.

\bibitem{book-Wu-et-al}
{\sc Z.~Wu, J.~Zhao, J.~Yin, and H.~Li}, {\em Nonlinear diffusion equations},
  World Scientific Publishing Co., Inc., River Edge, NJ, 2001.
\newblock Translated from the 1996 Chinese original and revised by the authors.

\bibitem{Xin-93}
{\sc J.~Xin}, {\em Existence and nonexistence of traveling waves and
  reaction-diffusion front propagation in periodic media}, J. Statist. Phys.,
  73 (1993), pp.~893--926.

\bibitem{Zla-05}
{\sc A.~Zlato{\v{s}}}, {\em Quenching and propagation of combustion without
  ignition temperature cutoff}, Nonlinearity, 18 (2005), pp.~1463--1475.

\end{thebibliography}

 \end{document}